\newtheorem{theorem}{Theorem}[section]
\newtheorem{lemma}[theorem]{Lemma}
\newtheorem{proposition}[theorem]{Proposition}
\newtheorem{corollary}[theorem]{Corollary}
\theoremstyle{definition}
\newtheorem{definition}[theorem]{Definition}
\theoremstyle{remark}
\newcommand{\supp}{\operatorname{supp}}
\newcommand{\dt}{\,\mathrm{d}t}
\newcommand{\dx}{\,\mathrm{d}x}
\newcommand{\dy}{\,\mathrm{d}y}
\newcommand{\dz}{\,\mathrm{d}z}
\newcommand{\ds}{\,\mathrm{d}s}
\newcommand{\dtau}{\,\mathrm{d}\tau}
\newcommand{\eps}{\ensuremath{\varepsilon}}
\newcommand{\R}{\ensuremath{\mathbb{R}}}
\newcommand{\N}{\ensuremath{\mathbb{N}}}
\newcommand{\sign}{\mathop{\rm sign}}
\newcommand{\dist}{\mathop{\rm dist}}
\newcommand{\e}{\mathrm{e}\,}
\renewcommand{\L}{\mathrm{L}}
\newcommand{\C}{\mathrm{C}}
\newcommand{\remark}{\noindent \emph{Remark. }}
\renewcommand{\leq}{\leqslant}
\renewcommand{\geq}{\geqslant}
\renewcommand{\le}{\leqslant}
\renewcommand{\ge}{\geqslant}
\newcommand{\vvvert}{|\hspace*{-1pt}|\hspace*{-1pt}|}
\numberwithin{equation}{section}
\newlength{\figwidth}
\title[]{A nonlocal one-phase Stefan problem that develops mushy regions}
\author[Br\"andle]{Cristina Br\"andle}
\address[Cristina Br\"{a}ndle]{Departamento de Matem\'{a}ticas, Universidad Carlos III de Madrid,
28911 Legan\'{e}s, Spain}
\email{cbrandle\@@math.uc3m.es}
\author[Chasseigne]{Emmanuel Chasseigne}
\address[Emmanuel Chasseigne]{Laboratoire de Math\'ematiques et Physique Th\'eorique,
CNRS UMR 6083, F\'ed\'eration Denis Poisson,
Universit\'e Fran\c{c}ois Rabelais, Parc de Grandmont,
37200 Tours, France}
\email{manu\@@lmpt.univ-tours.fr}
\author[Quir\'os]{Fernando Quir\'os}
\address[Fernando Quir\'{o}s]{Departamento de Matem\'{a}ticas, Universidad Aut\'{o}noma de Madrid
28049 Madrid, Spain}
\email{fernando.quiros\@@uam.es}
\thanks{All the authors supported by
Spanish Projects MTM2008-06326-C02-01 and -02}
\keywords{Stefan problem, mushy regions, nonlocal equations, degenerate parabolic equations.
   }
\subjclass[2010]{%
35R09, %  Integro-partial differential equations
45K05, % Integro-partial differential equations
45M05. % Asymptotics
}
\begin{document}
\begin{abstract}
We study a nonlocal version of the one-phase Stefan problem which
develops mushy regions, even if they were not present initially, a
model which can be of interest at the mesoscopic scale. The equation
involves a convolution with a compactly supported kernel. The
created mushy regions have the size of the support of this kernel.
If the kernel is suitably rescaled, such regions disappear and the
solution converges to the solution of the usual local version of the one-phase
Stefan problem. We prove that the model is well posed, and give
several qualitative properties. In particular, the long-time
behavior is identified by means of a nonlocal mesa solving an
obstacle problem.
\end{abstract}

\maketitle

\section{Introduction}
The aim of this paper is to study the following nonlocal version of
the one-phase Stefan problem posed in $\mathbb{R}^N\times(0,\infty)$,
\begin{equation}
\label{eq:stefan.nolocal} \left\{
\begin{aligned}
  &\partial_tu=J\ast v-v,\text{ where }v=(u-1)_+,\\
  &u(\cdot,0)=f\ge0,
\end{aligned}
\right.
\end{equation}
which presents interesting features from the physical point of view.
The function $J$ is assumed to be continuous, compactly supported,
radially symmetric and with $\int_{\mathbb{R}^N}J=1$. We denote by
$R_J$ the radius of the support of $J$. For some results we will
also assume that $J$ is nonincreasing in the radial variable.

\noindent\textsc{The local model --} The well-known usual local Stefan problem is a mathematical model
that describes the phenomenon of phase transition, for example
between water and ice, \cite{Me}, \cite{Ru}.
Its history goes back to Lam\'{e} and
Clapeyron \cite{LC} and, afterwards, Stefan \cite{St}. The one-phase Stefan
problem corresponds to the simplified case in which the temperature of the ice phase is supposed to
be maintained at the value where the phase transition occurs, say
$0^{\circ}{\rm C}$. The thermodynamical
state of the system is characterized by two state variables, {\it
temperature} $v$ and {\it enthalpy} $u$. Conservation of energy
implies  that they satisfy, in the absence of heat sources or sinks,
the evolution equation
$$
\rho \partial_t u= \nabla \cdot(\kappa \nabla v),
$$
where the {\it density} $\rho>0$ and the {\it thermal
conductivity} $\kappa>0$ are assumed to be constant.

On the other hand, there is a constitutive equation relating $v$ to
$u$, given in an ideal situation by the formula
\begin{equation}\label{eq:relation.v.u}
v=c^{-1}(u-L)_+,
\end{equation}
where $c>0$, which we assume to be constant, is called the {\it
specific heat} (the amount of energy needed to increase in one unit
the temperature of a mass unit of water) and $L>0$ is the {\it
latent heat} (the amount of energy needed to transform a mass unit
of ice into water). All the parameters  can be set to one with a
change of units, and we arrive to
\begin{equation}
\label{eq:stefan.local}
\partial_t u=\Delta v,\qquad
v=(u-1)_+.
\end{equation}
In contrast with the standard heat equation, this problem
has \emph{finite speed of propagation}: if the initial data are
compactly supported, the same is true for $u$ (and hence for $v$)
for any later time. This is one of the main features of the model,
and gives rise to the existence of two \emph{free boundaries}, one
for $u$, the set $\partial\{u>0\}$, and one for $v$,
$\partial\{v>0\}$.

On the ice we have $u=0$, while $u\ge 1$ on the liquid phase. The
points where $0<u<1$ correspond to the \emph{mushy region}, where we have neither ice nor water, but
something in an intermediate state between solid and liquid. The
temperature in this zone is $0$, but not its enthalpy.

There is a major drawback in the model: either $u(x,t)\ge1$ or
$u(x,t)=f(x)$~\cite{Friedman1968}. This means, on one hand, that a
point can belong to the  mushy region only if it belonged to it at
the initial time. On the other hand, there is no evolution of the
enthalpy inside the mushy region. Both things are unsatisfactory
from the physical point of view. Indeed, once the structure of ice
starts to break, it should take some time until it melts completely.
The region where this occurs may be small, but it should be noticed
at some intermediate (mesoscopic) scale between the microscopic and
macroscopic ones.

\noindent\textsc{The nonlocal model --} Bearing the above discussion
in mind, we consider the nonlocal diffusion version
\eqref{eq:stefan.nolocal} of the Stefan problem. As we shall see,
this model also has
finite speed of propagation, and hence free boundaries. Moreover, in
sharp contrast with the local problem, it develops mushy regions,
even if they were not present initially. The size of these mushy
regions is given by the size of the support of the convolution
kernel $J$. Finally, if the initial data are continuous, the
solution remains continuous for all times.

It is possible to rescale the kernel in such a way that solutions of
the nonlocal model converge to solutions of the local
version~\eqref{eq:stefan.local}. In this scaling, the support of the
kernel shrinks to a point, and hence the mushy regions disappear.
Thus, the local model can be viewed as a limit problem when going
from the intermediate scale to the macroscopic one.

What is the point in considering this particular  diffusion
operator, $\mathcal{L}v:=J\ast v -v$? With this choice, the evolution
of $u$ at a certain point is governed by a balance between the value of the temperature
at this point and a certain average of this physical magnitude in a fixed neighborhood. This
is a way to take into account possible middle-range interactions
between water and ice. In the local model, the region where the
average is taken shrinks to a point, and  the evolution of the
enthalpy is governed by the Laplacian of the temperature.

\noindent\textsc{Choice of the kernel --} One could use other non-local operators,
as fractional-Laplacian type operators of the form $\mathcal{K}v:=-(-\Delta)^{s}v$.
However, we will stick to operators $\mathcal{L}$ involving kernels which are both compactly supported and non-singular, though
most (if not all) of our results should be valid for singular
(yet compactly supported) kernels.
 Why are we being so restrictive? There are two main reasons:
\begin{itemize}
\item[$(i)$] Using a  non-compactly supported kernel means
that \lq\lq infinite-range" interactions are not considered to be negligible.
This would imply for instance that the mushy regions which are automatically created are instantaneously
spread throughout the space (see Section~\ref{subsec.mushy} with
$R_J=\infty$), something which is not satisfactory.\medskip

\item[$(ii)$] Kernels which have a singularity at the origin have
the big advantage of implying regularization properties
(\cite{BarlesChasseigneImbert} and~\cite{CaffarelliSilvestre}).
It is challenging from
a mathematical point of view to try to
handle situations where no regularizing effect can occur.

 Another way to think about this
absence of regularization is to understand it as a lack of
compactness of the inverse of $\mathcal{L}$, which is the source of
several problems. Our approach, which consists in controlling the
supports of solutions, allows us to bypass such problems, up to a
certain point.
\end{itemize}

\noindent\textsc{About the initial data  --}  We would like to
consider typical situations in which initially there is ice with zero
enthalpy everywhere except at some places where there is water at
some positive temperature. In this case, we shall see that
a mushy region will develop as ice
melts from an initial configuration in which no mushy region was
present. Thus, we have to consider initial data which are in
principle not continuous, but only integrable.
This will lead to
solutions that are not continuous. Therefore, we have to handle the
various qualitative properties concerning the supports and mushy
regions not in the usual (continuous) sense, but in the sense of
distributions, which requires some effort.

If the initial data are continuous and bounded, we can construct a
solution with these two properties. Some of our techniques can be
greatly simplified for such solutions, and can be later applied to
more general data by approximation. Hence we will devote some time
to discuss the basic theory for solutions in this class.

As expected, when the initial data are at the same time bounded,
continuous and integrable,  the two concepts of solution mentioned
above coincide.

In the one-phase Stefan problem, the temperature, and hence the enthalpy, are assumed to be nonnegative. Therefore, in principle we should only consider nonnegative initial data $f$.  Indeed, if the initial enthalpy had sign changes,  we would  be dealing with a two-phase Stefan problem, and the relation between the temperature and  the enthalpy would not be given by \eqref{eq:relation.v.u}. In particular, the temperature should be negative for negative values of the enthalpy. Therefore, the relevant model would be different.

Though the problem might not have any physical meaning for functions $f$ that have sign changes, some of our results are still true for such general initial data. Whenever we know that this is the case, we will state the corresponding theorem for a general $f$, and will add the restriction of nonnegativity only if required.

\noindent\textsc{Long-time behavior --} Another important aspect of
evolution equations is the long time behavior of solutions.  For a
certain class of initial data (which does not include all non-negative functions in $\L^1(\mathbb{R}^N)$  if $N=1,2$),
we prove that the solution converges as time
goes to infinity to a solution of a nonlocal mesa problem which is
not the ``classical'' (local) mesa problem.
The identification of the limit (the mesa) can be done by
solving an obstacle problem. More precisely, the asymptotic
behavior of the solution $u$ is given by the projection
$$
{\mathcal P}f:=f+J\ast w-w
$$
where $w$ solves an obstacle problem detailed in
Section~\ref{sect:asymptotic}.

The projection operator ${\mathcal P}$ is $\L^1$-contractive in the set $\mathcal{S}$ of
admissible functions for which the above convergence result is valid, Corollary~\ref{corollary:contraction.mesa}.
On the other hand, $\mathcal{S}$ is dense in the set of $\L^1(\mathbb{R}^N)$ functions which are non-negative a.e.
Therefore,  ${\mathcal P}$  can be extended by continuity to the latter class of functions. This will
allow us to identify the large time limit of solutions of
\eqref{eq:stefan.nolocal} for the wider class of integrable and
nonnegative initial data.

\noindent\textsc{Abstract setting  --} Equations like
\eqref{eq:stefan.nolocal} and \eqref{eq:stefan.local} can be
embedded in the abstract setting of semi-group theory for equations
of the form
$$
    \gamma(u)_t+Au\ni0,
$$
where $\gamma$ is a maximal monotone graph, and $A$ is a linear
(bounded or unbounded) $m$-accretive operator,  see
\cite{CrandallPierre1982} and also
\cite{AndreuMazonRossiToledo2008},
\cite{AndreuMazonRossiToledoBook2010}. This theory provides
existence and uniqueness for our model, though not the main
qualitative properties considered here. As for existence and
uniqueness, in the special case that we are studying, $\gamma$ is
the inverse of the Lipschitz graph $\Gamma:s\mapsto(s-1)_+$,  and
many arguments can be written at a lower cost.

\noindent{\sc Notation -- }
Given a set $\Omega$ we define:
 \begin{itemize}\itemsep=6pt
     \item $\mathrm{BC}(\Omega)=\{\varphi\in \C(\Omega): \varphi\text{ bounded in
        }\mathbb{R}^N\}$;
      \item $\C_\mathrm{c}(\Omega)=\{\varphi\in
          \C(\Omega): \varphi \textrm{ compactly supported}\}$;
    \item $\C_\mathrm{c}^\infty(\Omega)=\{\varphi\in
          \C^\infty(\Omega): \varphi \textrm{ compactly supported}\}$.
   \end{itemize}
We also denote
\begin{itemize}\itemsep=6pt
  \item  $\L^1_+(\mathbb{R}^N)=\{\varphi\in\L^1(\mathbb{R}^N):\varphi\ge0 \text{ a.e.}\}$;
  \item $\mathrm{BC}_+(\mathbb{R}^N)=\{\varphi\in \mathrm{BC}(\mathbb{R}^N): \varphi\ge0\}$;
    \item $\C_0(\mathbb{R}^N)=\{\varphi\in \C(\mathbb{R}^N): \varphi\to 0 \text{ as
  }|x|\to\infty\}$.
\end{itemize}

The elements $\psi$ belonging to $\L^1((0,T);\L^1(\R^N))$ will sometimes be viewed
  as elements of $\L^1(\R^N\times(0,T))$. In such cases we denote $\psi(x,t)=\psi(t)(x)$.

\noindent{\sc Organization of the paper -- } In Section 2 we derive
the general theory of the model both for integrable initial data and
for continuous and bounded initial data. Section 3 is devoted to the
study of mushy regions and free boundaries. Convergence to the local
Stefan problem and disappearance of the mushy regions in the
macroscopic scale are done in Section 4. In Section 5 we study the
large time behavior of solutions. Numerical experiments and
illustrations of the qualitative properties of the model are
collected in Section 6. We devote the last section of the paper to
establish some conclusions and make some comments on the model. In
order to prove some of our results, we have needed to improve
slightly the existing results on the asymptotic behavior of
solutions to the non-local heat equation. Such improvements are
proved in an appendix.

%%%%%%%%%%%%%%%%%%%%%%%%%%%%%%%%%%%%%%%%%%%%%%%%%%%%%%%%%%%%%%%%%%%%%%%%%%%%%%%%%
%%%%%%%%%%%%%%%%%%%%%%%%%%%%%%%%%%%%%%%%%%%%%%%%%%%%%%%%%%%%%%%%%%%%%%%%%%%%%%%%%
\section{Basic theory of the model}
\label{sect:basic}
\setcounter{equation}{0}

We will develop here the basic theory for the two
concepts of solution mentioned in the introduction.

%%%%%%%%%%%%%%%%%%%%%%%%%%%%%%%%%%%%%%%%%%%%%%%%%%
\subsection{$\L^1$ theory}
We start with the theory for integrable initial data. In this case
the solution is regarded as a continuous curve in $\L^1(\R^N)$.
\begin{definition}
    Let $f\in\L^1(\R^N)$.
    An $\L^1$-solution of~\eqref{eq:stefan.nolocal} is a function $u\in \C([0,\infty);\L^1(\R^N))$ such that ~\eqref{eq:stefan.nolocal}
    holds in the sense of distributions, or equivalently, if for every
    $t>0$, $u(t)\in\L^1(\R^N)$ and
    \begin{equation}\label{eq:form.int}
        u(t)=f+\int_0^t \big(J\ast v(s)-v(s)\big)\ds,\qquad v=(u-1)_+\quad\text{ a.e.}
    \end{equation}
%Subsolutions and supersolutions are defined as usual replacing the
%equality respectively by $\leq$ or $\geq$.
\end{definition}

\remark
If $u$ is an $\L^1$-solution, then
$u\in\L^1(\mathbb{R}^N\times[0,T])$ for all $T>0$.
Hence,~\eqref{eq:stefan.nolocal} holds, not only  in the sense of
distributions, but also~a.e., and $u$ is said to be a \emph{strong}
solution. Moreover, since $v=(u-1)_+\in\C([0,\infty);\L^1(\R^N))$,
we also have $u\in\C^1([0,\infty);\L^1(\R^N))$, and the equation
holds~a.e.~in $x$ for all $t\ge0$.

\begin{theorem}\label{thm:stefan.noloc.L1}
    For any $f\in \L^1(\R^N)$, there exists a unique $\L^1$-solution of~\eqref{eq:stefan.nolocal}.
\end{theorem}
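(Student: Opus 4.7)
The plan is to recast the integral equation \eqref{eq:form.int} as a fixed-point equation for the Duhamel-type operator
$$
(\mathcal{T}u)(t) := f + \int_0^t \bigl(J\ast v(s) - v(s)\bigr)\ds, \qquad v(s) := (u(s)-1)_+,
$$
on the Banach space $X := \C([0,\infty);\L^1(\R^N))$, and then invoke Banach's contraction principle with a suitably weighted norm so as to obtain global existence and uniqueness in a single stroke, rather than patching local solutions together.

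The key observation is that the nonlinearity $\Phi(u) := J\ast(u-1)_+ - (u-1)_+$ is globally Lipschitz from $\L^1(\R^N)$ into itself with constant at most $2$. Indeed, since $s\mapsto(s-1)_+$ is $1$-Lipschitz pointwise and $\|J\|_1 = 1$, Young's inequality gives
$$
\|\Phi(u) - \Phi(w)\|_1 \leq \|J\|_1\,\|(u-1)_+ - (w-1)_+\|_1 + \|(u-1)_+ - (w-1)_+\|_1 \leq 2\|u-w\|_1.
$$
In particular, if $u\in X$ then $s \mapsto \Phi(u(s))$ is continuous into $\L^1(\R^N)$, so the Bochner integral in the definition of $\mathcal{T}u$ is well defined and $\mathcal{T}u\in X$; moreover $v=(u-1)_+\in X$ as well.

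To run the contraction, I would equip $X$ with the weighted norm $\vvvert u\vvvert_\lambda := \sup_{t\geq 0} e^{-\lambda t}\|u(t)\|_1$ for some $\lambda>2$, and restrict to the Banach subspace $X_\lambda$ where this is finite. The Lipschitz bound above gives, for $u,w\in X_\lambda$,
$$
e^{-\lambda t}\|\mathcal{T}u(t) - \mathcal{T}w(t)\|_1 \leq 2e^{-\lambda t}\int_0^t e^{\lambda s}\ds \cdot \vvvert u-w\vvvert_\lambda \leq \tfrac{2}{\lambda}\vvvert u-w\vvvert_\lambda,
$$
and a parallel computation starting from $\|\mathcal{T}u(t)\|_1\le\|f\|_1 + 2\int_0^t\|u(s)\|_1\ds$ shows that $\mathcal{T}$ sends $X_\lambda$ into itself. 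Hence $\mathcal{T}$ is a strict contraction on $X_\lambda$ and has a unique fixed point, which by construction is an $\L^1$-solution. Uniqueness among \emph{all} $\L^1$-solutions (not only those in $X_\lambda$) then follows by applying the same Lipschitz estimate to the difference of two candidates and invoking Gronwall's inequality on an arbitrary finite interval $[0,T]$.

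The main (and in fact only minor) obstacle is the bookkeeping of continuity and Bochner integrability in $\L^1(\R^N)$; the substantive mechanism is simply that $\|J\|_1=1$ and that the truncation $s\mapsto(s-1)_+$ is nonexpansive, which together render $\Phi$ Lipschitz on $\L^1$ with a constant independent of the size of $u$. This is precisely what rules out finite-time blow-up and yields the global theory for free — no need here to invoke the abstract semigroup machinery of \cite{CrandallPierre1982}.
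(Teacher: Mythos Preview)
Your proof is correct and rests on the same mechanism as the paper's: the global $2$-Lipschitz bound for $u\mapsto J\ast(u-1)_+-(u-1)_+$ on $\L^1(\R^N)$, fed into Banach's fixed-point theorem for the Duhamel operator $\mathcal{T}$. The only difference is packaging: the paper uses the unweighted sup norm on $\C([0,t_0];\L^1(\R^N))$, obtains a contraction for $t_0<1/2$, and then iterates (the step size being data-independent), whereas you use the Bielecki-type weighted norm $\sup_{t\ge0} e^{-\lambda t}\|\cdot\|_1$ with $\lambda>2$ to get a global contraction in one stroke and recover uniqueness in all of $X$ via Gronwall.
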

\begin{proof}
Let $\mathcal{B}_{t_0}$ be the Banach space consisting of the functions $u\in\C([0,t_0];\L^1(\mathbb{R}^N))$ endowed with the norm
    $$
      \vvvert u\vvvert=\max_{0\leq t\leq t_0}\|u(t)\|_{\L^1(\mathbb{R}^N)}.
    $$
We define the operator
$\mathcal{T}:\mathcal{B}_{t_0}\to\mathcal{B}_{t_0}$ through
$$
  (\mathcal{T}_{f}u)(t)=f+\int_0^t\left( J*(u-1)_+(s)-(u-1)_+(s)\right)\ds.
$$
This operator turns out to be contractive if $t_0$ is small enough. Indeed,
$$
\begin{array}{l}
 \displaystyle\int_{\mathbb{R}^N}
 |\mathcal{T}_{f}\varphi-\mathcal{T}_{f}\psi|(t)
 \\[10pt]
\displaystyle \qquad
 \leq\int_{\mathbb{R}^N}\int_0^t \Big( |J*((\varphi-1)_+-(\psi-1)_+)(s)|
  +
  |(\varphi-1)_+-(\psi-1)_+|(s)\Big)\ds \\[10pt]
\displaystyle  \qquad \leq
\int_{0}^t\Big(\|J\|_{\L^1(\mathbb{R}^N)}+1)\Big)\|((\varphi-1)_+-(\psi-1)_+)(s)\|_{\L^1({\mathbb{R}^N)}}\ds.
\end{array}
$$
Hence
$$\begin{aligned}
  \vvvert\mathcal{T}_{f}\varphi-\mathcal{T}_{f}\psi\vvvert & \leq 2t_0
  \max_{0\le t\le t_0}\|((\varphi-1)_+-(\psi-1)_+)(t)\|_{\L^1(\mathbb{R}^N)}\\
 & \leq 2t_0\vvvert \varphi-\psi\vvvert.
\end{aligned}
$$
Thus, $\mathcal{T}$ is a contraction if $t_0<1/2$.
Existence and uniqueness in the time interval $[0,t_0]$ now follow easily, using Banach's fixed point theorem. Since the length of the  existence and uniqueness time interval does not depend on the initial data, we may iterate the argument to extend the result to all positive times.
\end{proof}

The energy of the $\L^1$-solutions is constant in time.
\begin{theorem}
 \label{corol:mass.conservation}
 Let $f\in \L^1(\R^N)$. The $\L^1$-solution $u$ to~\eqref{eq:stefan.nolocal} satisfies
    $$\int_{\R^N}u(t)=\int_{\R^N}f \qquad\text{for every }t>0.
    $$
\end{theorem}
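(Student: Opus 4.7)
The plan is to integrate the integral formulation \eqref{eq:form.int} over $\mathbb{R}^N$ and use Fubini together with the identity $\int_{\mathbb{R}^N} J\ast v = \int_{\mathbb{R}^N} v$, which relies on $\int_{\mathbb{R}^N}J=1$. First I would fix $t>0$ and recall from the remark following the definition of $L^1$-solution that $v=(u-1)_+$ belongs to $C([0,\infty);L^1(\mathbb{R}^N))$, and in particular $v\in L^1(\mathbb{R}^N\times[0,t])$. By Young's inequality, $\|J\ast v(s)\|_{L^1(\mathbb{R}^N)}\le \|J\|_{L^1(\mathbb{R}^N)}\|v(s)\|_{L^1(\mathbb{R}^N)}=\|v(s)\|_{L^1(\mathbb{R}^N)}$, so $J\ast v-v$ also lies in $L^1(\mathbb{R}^N\times[0,t])$.

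Next I would integrate \eqref{eq:form.int} in $x$. Fubini's theorem is applicable thanks to the integrability just noted, and it yields
\begin{equation*}
  \int_{\R^N} u(t)\dx = \int_{\R^N} f \dx + \int_0^t \int_{\R^N}\bigl(J\ast v(s)-v(s)\bigr)\dx\ds.
\end{equation*}
A second application of Fubini (this time to the convolution), using $\int_{\R^N}J\,\mathrm{d}y=1$, gives
\begin{equation*}
  \int_{\R^N} (J\ast v(s))(x)\dx = \int_{\R^N}\!\int_{\R^N} J(x-y)\,v(s,y)\dy\dx = \int_{\R^N} v(s)\dy,
\end{equation*}
so the inner integral in the previous display vanishes for every $s\in[0,t]$. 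This yields the desired equality $\int_{\R^N} u(t)=\int_{\R^N} f$.

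There is no real obstacle beyond checking integrability so as to invoke Fubini: the argument is essentially the distributional identity $\int_{\R^N}(J\ast\varphi-\varphi)=0$ applied pointwise in $s$ and then integrated in time. One could equivalently work from the a.e.~form $\partial_t u = J\ast v-v$ (which holds by the remark), integrate in $x$ to obtain $\frac{d}{dt}\int_{\R^N} u(t)=0$, and conclude since $t\mapsto \int_{\R^N}u(t)$ is continuous with initial value $\int_{\R^N} f$.
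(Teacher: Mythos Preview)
Your proof is correct and follows essentially the same route as the paper: integrate the integral formulation \eqref{eq:form.int} over $\R^N$, apply Fubini (justified by the $\L^1$-bounds you check), and use $\int_{\R^N}J\ast v(s)=\int_{\R^N}v(s)$ since $\int_{\R^N}J=1$. Your version simply spells out the integrability verifications in more detail than the paper does.
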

\begin{proof}
    Since $u(t)\in \L^1(\R^N)$ for any $t$, integration of the equation~\eqref{eq:form.int} in space
    yields, thanks to Fubini's Theorem:
    $$\int_{\R^N} u(t) = \int_{\R^N} f + \int_0^t \Big(\int_{\R^N} J\ast v(s) - \int_{\R^N} v(s)\Big)\ds=\int_{\R^N} f.
    $$
\end{proof}

Our next aim is to derive an $\L^1$-contraction property for $\L^1$-solutions.  In order to obtain it, we need first to approximate the
graph $\Gamma(s)=(s-1)_+$ by a sequence of strictly monotone graphs
$\Gamma_n(s)$ such that:
\begin{itemize}
        \item[(i)] there is a constant $L$ independent of $n$ such that $|\Gamma_n(s)-\Gamma_n(t)|\le L|s-t|$, for all $n\in\mathbb{N}$ and
        $s,t\ge0$;
        \item[(ii)] for all  $
         n\in\mathbb{N}$, $\Gamma_n(0)=0$ and $\Gamma_n$ is strictly increasing on $[0,\infty)$;
        \item[(iii)]  $\Gamma_n(s)\leq s$ for all $n\in\mathbb{N}$ and $s\ge 0$;
        \item[(iv)]  $\Gamma_n\to\Gamma$ as
         $n\to\infty$ uniformly in $[0,\infty)$;
    \end{itemize}
(take for instance $\Gamma_n(s)=s/(n+1)$ for $0\leq s\leq (n+1)/n$,
and $\Gamma_n(s)=s-1$ for $s>(n+1)/n$).

Since $\Gamma_n$ is Lipschitz, for any $f\in\L^1(\R^N)$ and any
$n\in\N$ there exists a unique $\L^1$-solution
$u_n\in\C([0,\infty);\L^1(\mathbb{R}^N))$ of the approximate problem
\begin{equation}
\label{eq:approximate.problems}
  \partial_t u_n = J\ast \Gamma_n(u_n)- \Gamma_n(u_n)
\end{equation}
with initial data $u_n(0)=f$. The proof is just like that of
Theorem~\ref{thm:stefan.noloc.L1}.
Moreover,   $\Gamma_n(u_{n})\in\C([0,\infty);\L^1(\mathbb{R}^N))$, and, hence,
    $u_{n}\in\C^1([0,\infty);\L^1(\mathbb{R}^N))$.
    Conservation of energy also holds.

The $\L^1$-contraction for our original problem will follow from an
analogous result for the approximate problems.
\begin{lemma}
    \label{lem:comp.approx}
  Let $u_{n,1}$ and $u_{n,2}$ be two $\L^1$-solutions of~\eqref{eq:approximate.problems}
  with initial data $f_{1},f_{2}\in \L^1(\R^N)$. Then,
        \begin{equation}
        \label{eq:contraction.approximate}
        \int_{\R^N}\big(u_{n,1}-u_{n,2}\big)_+(t)\leq
        \int_{\R^N}\big(f_{1}-f_{2}\big)_+ \qquad\text{for every }t\ge0.
        \end{equation}
\end{lemma}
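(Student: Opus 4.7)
The approach I would take is a nonlocal analogue of Kato's inequality, exploiting crucially the strict monotonicity of $\Gamma_n$. Set $w:=u_{n,1}-u_{n,2}$ and $g:=\Gamma_n(u_{n,1})-\Gamma_n(u_{n,2})$. Since $u_{n,i}\in \C^1([0,\infty);\L^1(\R^N))$ and $\Gamma_n$ is Lipschitz, subtracting the two equations gives
\begin{equation*}
\partial_t w = J\ast g - g\qquad\text{in }\L^1(\R^N),\text{ for every }t\ge0.
\end{equation*}

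Next I would regularize the positive part in the standard way. Pick a smooth nondecreasing $p_\eps:\R\to[0,1]$ with $p_\eps(s)=0$ for $s\leq 0$ and $p_\eps\to \mathds{1}_{(0,\infty)}$ pointwise as $\eps\to 0$, and let $P_\eps(s):=\int_0^s p_\eps$, so that $0\le P_\eps(s)\le s_+$ and $P_\eps\to(\cdot)_+$ pointwise. Multiplying the equation by $p_\eps(w)$ and integrating in space and then on $[0,t]$ yields
\begin{equation*}
\int_{\R^N}P_\eps(w(t))\,\dx - \int_{\R^N}P_\eps(f_1-f_2)\,\dx = \int_0^t\!\!\int_{\R^N} p_\eps(w(s))\bigl(J\ast g-g\bigr)(s)\,\dx\,\ds.
\end{equation*}
Now I pass to the limit $\eps\to 0$. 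Because $\Gamma_n$ is strictly increasing (property (ii)), the sets $\{w>0\}$ and $\{g>0\}$ agree up to a null set, so $p_\eps(w)\to\mathds{1}_{\{g>0\}}$ a.e.; using $|P_\eps(w)|\le|w|$ on the left and $|p_\eps(w)(J\ast g-g)|\le|J\ast g-g|\in\L^1(\R^N\times(0,t))$ on the right, dominated convergence gives
\begin{equation*}
\int_{\R^N} w_+(t)\,\dx - \int_{\R^N}(f_1-f_2)_+\,\dx = \int_0^t\!\!\int_{\R^N}\mathds{1}_{\{g(s)>0\}}\bigl(J\ast g-g\bigr)(s)\,\dx\,\ds.
\end{equation*}

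To close the argument I would estimate the right-hand side via Fubini, using $J\ge 0$ and $\int J=1$:
\begin{equation*}
\int_{\R^N}\mathds{1}_{\{g>0\}}(x)(J\ast g)(x)\,\dx \;\le\; \int_{\R^N}(J\ast g_+)(x)\,\dx \;=\; \int_{\R^N}g_+ \;=\; \int_{\R^N}\mathds{1}_{\{g>0\}}\,g,
\end{equation*}
so the right-hand side is $\le 0$ and~\eqref{eq:contraction.approximate} follows. The main technical obstacle I anticipate is the rigorous justification of the ``chain rule'' identity $\frac{d}{dt}\int P_\eps(w)=\int p_\eps(w)\,\partial_t w$ within the $\L^1$-strong-solution framework, together with the careful application of dominated convergence. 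Once these are in hand, the strict monotonicity of $\Gamma_n$ is precisely what makes the sign identification $\{w>0\}=\{g>0\}$ painless and bypasses the difficulty posed by the flat part of the original graph $\Gamma$ on $[0,1]$, which is the whole reason the approximation was introduced in the first place.
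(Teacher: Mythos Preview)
Your proof is correct and follows essentially the same strategy as the paper: subtract the two equations, multiply by (a regularization of) $\mathds{1}_{\{w>0\}}$, use strict monotonicity of $\Gamma_n$ to identify $\{w>0\}=\{g>0\}$ a.e., and apply the nonlocal Kato-type inequality $\mathds{1}_{\{g>0\}}(J\ast g)\le J\ast g_+$ together with Fubini to see that the right-hand side is nonpositive. The only difference is presentational: the paper multiplies directly by $\mathds{1}_{\{u_{n,1}>u_{n,2}\}}$ and asserts the chain rule $\partial_t w\cdot\mathds{1}_{\{w>0\}}=\partial_t w_+$ from $w\in\C^1([0,\infty);\L^1(\R^N))$, whereas you regularize with $p_\eps,P_\eps$ and pass to the limit---your version is the more careful justification of that same step.
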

\begin{proof} We subtract  the equations for $u_{n,1}$ and $u_{n,2}$
    and multiply by $\mathds{1}_{\{u_{n,1}>u_{n,2}\}}$.
    Since $u_{n,1}-u_{n,2}\in\C^1([0,\infty);\L^1(\mathbb{R}^N))$, then
    $$
    \partial_t (u_{n,1}-u_{n,2})\mathds{1}_{\{u_{n,1}>u_{n,2}\}}=\partial_t (u_{n,1}-u_{n,2})_+.
    $$
    On the other hand, since $0\leq\mathds{1}_{\{u_{n,1}>u_{n,2}\}}\leq 1$, we have
    $$
    J\ast(\Gamma_n(u_{n,1})-\Gamma_n(u_{n,2}))\mathds{1}_{\{u_{n,1}>u_{n,2}\}}
    \leq J\ast(\Gamma_n(u_{n,1})-\Gamma_n(u_{n,2}))_+.
    $$
    Finally, since $\Gamma_n$ is strictly monotone,
    $\mathds{1}_{\{u_{n,1}>u_{n,2}\}}=\mathds{1}_{\{\Gamma_n(u_{n,1})>\Gamma_n(u_{n,2})\}}$.
    Thus,
    $$
    (\Gamma_n(u_{n,1})-\Gamma_n(u_{n,2}))\mathds{1}_{\{u_{n,1}>u_{n,2}\}}=(\Gamma_n(u_{n,1})-\Gamma_n(u_{n,2}))_+
    .
    $$
    We end up with
    $$
    \partial_t (u_{n,1}-u_{n,2})_+\leq J\ast(\Gamma_n(u_{n,1})-\Gamma_n(u_{n,2}))_+-
    (\Gamma_n(u_{n,1})-\Gamma_n(u_{n,2}))_+.
    $$
    Integrating in space, and using Fubini's Theorem, which can be
    applied, since
    $(\Gamma_n(u_{n,1}(t))-\Gamma_n(u_{n,2}(t)))_+\in\L^1(\R^N)$, we get
    $$
    \partial_t \int_{\mathbb{R}^N}(u_{n,1}-u_{n,2})_+(t)\le0.
    $$
\end{proof}

\remark
It follows immediately from \eqref{eq:contraction.approximate} that
%  \begin{equation}
%  \label{eq:true.contraction.approximate}
$$
    \|(u_{n,1}-u_{n,2})(t)\|_{\L^1(\mathbb{R}^N)}\leq  \|f_{1}-f_{2}\|_{\L^1(\mathbb{R}^N)}.
$$

\begin{corollary}\label{cor:contraction.stefan.noloc}
  Let $u_1$ and $u_2$ be two $\L^1$-solutions of~\eqref{eq:stefan.nolocal}
  with initial data $f_{1},f_{2}\in \L^1(\R^N)$. Then, for every $t\ge0$,
  \begin{equation}
  \label{eq:contraction.enthalpy}
  \int_{\mathbb{R}^N} (u_1-u_2)_+(t)\leq   \int_{\mathbb{R}^N} (f_{1}-f_{2})_+.
  \end{equation}
\end{corollary}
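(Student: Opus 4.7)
The plan is to obtain \eqref{eq:contraction.enthalpy} by passing to the limit $n\to\infty$ in the analogous inequality for the approximate problems provided by Lemma~\ref{lem:comp.approx}. Denote by $u_{n,i}$ the $\L^1$-solution of~\eqref{eq:approximate.problems} with initial datum $f_i$, $i=1,2$. Lemma~\ref{lem:comp.approx} immediately yields
$$
\int_{\mathbb{R}^N}(u_{n,1}-u_{n,2})_+(t)\leq\int_{\mathbb{R}^N}(f_1-f_2)_+.
$$
Since $g\mapsto\int g_+$ is $1$-Lipschitz with respect to the $\L^1$-norm, the whole argument reduces to proving that, for every $T>0$ and $i=1,2$, $u_{n,i}\to u_i$ in $\C([0,T];\L^1(\mathbb{R}^N))$ as $n\to\infty$.

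For this convergence I would subtract the integral formulations satisfied by $u_{n,i}$ and $u_i$, take $\L^1$-norms, and use $\|J\ast\varphi\|_{\L^1}\leq\|\varphi\|_{\L^1}$ together with the common Lipschitz constant $L$ of $\Gamma$ and every $\Gamma_n$ (property~(i)). Splitting $\Gamma_n(u_{n,i})-\Gamma(u_i)=(\Gamma_n(u_{n,i})-\Gamma_n(u_i))+(\Gamma_n(u_i)-\Gamma(u_i))$ then gives
$$
\|u_{n,i}(t)-u_i(t)\|_{\L^1}\leq 2L\int_0^t\|u_{n,i}(s)-u_i(s)\|_{\L^1}\,\ds+2\int_0^t\delta_n(s)\,\ds,
$$
with $\delta_n(s):=\|\Gamma_n(u_i(s))-\Gamma(u_i(s))\|_{\L^1(\mathbb{R}^N)}$. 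Gronwall's inequality reduces the problem to checking that $\int_0^T\delta_n(s)\,\ds\to 0$ as $n\to\infty$.

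The only delicate step, and the one I expect to be the main obstacle, is estimating $\delta_n$: it would be tempting to invoke the uniform convergence $\Gamma_n\to\Gamma$ of property~(iv), but this is useless by itself since the set $\{0\leq u_i(s)\leq (n+1)/n\}$ may have infinite measure. Instead, for the explicit choice of $\Gamma_n$ a direct case analysis gives $|\Gamma_n(r)-\Gamma(r)|=0$ for $r\geq(n+1)/n$, $|\Gamma_n(r)-\Gamma(r)|\leq|r|/(n+1)$ for $r\leq 1$, and $|\Gamma_n(r)-\Gamma(r)|\leq 1/(n+1)$ on the transition region $[1,(n+1)/n]$. Combining these bounds with Chebyshev's inequality applied to $\{u_i(s)\geq 1\}$ yields
$$
\delta_n(s)\leq\frac{2\|u_i(s)\|_{\L^1(\mathbb{R}^N)}}{n+1}.
$$
Since $s\mapsto\|u_i(s)\|_{\L^1}$ is continuous and therefore bounded on $[0,T]$, this gives the desired uniform decay, closes the Gronwall argument, and lets us pass to the limit in the approximate contraction to obtain~\eqref{eq:contraction.enthalpy}.
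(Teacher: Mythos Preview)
Your argument is correct and, in fact, cleaner than the route taken in the paper. The overall strategy is the same---approximate by the problems with strictly monotone $\Gamma_n$ and pass to the limit in Lemma~\ref{lem:comp.approx}---but the convergence step is handled differently.

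The paper does \emph{not} prove $u_{n,i}\to u_i$ in $\C([0,T];\L^1(\R^N))$ directly. Instead it establishes relative compactness of $\{u_n\}$ in $\L^1_{\rm loc}(\R^N\times(0,\infty))$ via the Fr\'echet--Kolmogorov criterion: spatial translations are controlled using the $\L^1$-contraction for the approximate problems, and time translations via the equation together with the sublinearity $\Gamma_n(s)\le s$. A subsequence limit $\bar u$ is then identified with the unique $\L^1$-solution $u$ by passing to the limit in the integral formulation; finally Fatou's lemma yields \eqref{eq:contraction.enthalpy} for a.e.~$t$, and continuity in time extends it to every $t\ge0$.

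Your Gronwall route bypasses the compactness machinery entirely and delivers the stronger convergence $u_{n,i}\to u_i$ in $\C([0,T];\L^1(\R^N))$, from which the limiting inequality follows for every $t$ at once (no Fatou, no a.e.-to-everywhere step). The price is that you need the quantitative pointwise bound $|\Gamma_n(r)-\Gamma(r)|\le C|r|/(n+1)$ rather than just the uniform convergence of property~(iv); this is exactly what the explicit choice of $\Gamma_n$ provides, and you identify this correctly as the only delicate point. The paper's argument, by contrast, uses only the abstract properties~(i)--(iv) and would apply to any admissible approximating sequence, at the cost of a longer proof.
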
\label{cor:L1.comp}
\begin{proof}
The idea is to pass to the limit in $n$ in the contraction
property~\eqref{eq:contraction.approximate} for the approximate
problems~\eqref{eq:approximate.problems}. Hence, the first step is
to prove that any solution $u$ of~\eqref{eq:stefan.nolocal} is the
limit of solutions $u_n$ to~\eqref{eq:approximate.problems}.

    Let $\omega$ be an open set whose closure is contained in $\R^N\times(0,\infty)$, $\omega\subset\subset\R^n\times(0,\infty)$.
    By the conservation of energy,
    $\|u_n(t)\|_{\L^1(\R^N)}=\|f\|_{\L^1(\R^N)}$.  Hence $\{u_n\}$ is uniformly bounded in $\L^1(\omega)$.
    Therefore, in order to apply Fr\'{e}chet-Kolmogorov's compactness criterium,
    it is enough to control
    $$
    I=\iint_\omega|u_n(x+h,t+s)-u_n(x,t)|\dx\dt
    $$
    for $h$ and $s$ small enough (how small not depending on $n$).

    On one hand,  thanks to the $\L^1$-contraction property,
    \begin{equation}
    \label{eq:FK.space.term}
    \begin{array}{l}
    \displaystyle\int_0^T\int_{\mathbb{R}^N} | u_n(x+h,t+s)-u_n(x,t+s)
    |\dx\dt\\[10pt]
    \displaystyle\qquad\leq \int_0^T \!\!\int_{\R^N}| f(x+h)-f(x)|\dx\dt\leq To_h(1)
    \end{array}
    \end{equation}
    as $h\to 0$ uniformly in $s$ and $n$. On the other hand, using the
    regularity in time, then Fubini's Theorem, and finally the
    sublinearity of $\Gamma_n$ and the
    $\L^1$-contraction property, we get
    \begin{equation}
    \label{eq:FK.time.term}
    \begin{array}{l}
    \displaystyle\int_0^T \int_{\mathbb{R}^N} | u_n(x,t+s)-u_n(x,t)
    |\dx\dt\\[10pt]
    \displaystyle\hskip2cm\leq \int_0^T \int_{\mathbb{R}^N} \int_t^{t+s} |\partial_t
    u_n|(x,\tau)\dtau\dx\dt\\[10pt]
    \displaystyle\hskip2cm
       = \int_0^T \int_t^{t+s} \int_{\R^N} |J*\Gamma_n(u_n) -
       \Gamma_n(u_n)|(x,\tau)\dx\dtau\dt\\[10pt]
    \displaystyle       \hskip2cm\leq \int_0^T \int_t^{t+s} (\|J\|_{\L^\infty(\R^N)}+
    1)\|\Gamma_n(u_n(\tau))\|_{\L^1(\R^N)}\dtau\dt\\[10pt]
    \displaystyle\hskip2cm
           \le (\|J\|_{\L^\infty(\R^N)}+ 1)\|f\|_{\L^1(\R^N)} sT.
    \end{array}
    \end{equation}
    Taking $T$ such that $\omega\subset \R^N\times(0,T)$, and using
    the estimates~\eqref{eq:FK.space.term} and~\eqref{eq:FK.time.term}
    we get the required control.

    Summarizing, along a subsequence
    (still noted $u_n$), $u_n\to\bar{u}$ in $\L^1_{\rm loc}(\R^N\times(0,\infty))$ for some function $\bar{u}$.
    Moreover: (i) since the sequence $\{u_n(t)\}$ is uniformly bounded in $\L^1(\R^N)$,
    we deduce from Fatou's lemma that for almost every $t>0$, $\bar
    u(t)\in\L^1(\R^N)$; (ii) using that the nonlinearities $\Gamma_n$ are
    uniformly Lipschitz, and their uniform convergence, we get that $\Gamma_n(u_n)\to\Gamma(\bar u)$ in
    $\L^1_{\rm loc}(\R^N\times(0,\infty))$; (iii) as a consequence, since $J$ is compactly supported,
    $J\ast \Gamma_n(u_n)\to J\ast \Gamma(\bar u)$ in $\L^1_{\rm loc}(\R^N\times(0,\infty))$.
    All this is enough to pass to the limit in the integrated version of~\eqref{eq:approximate.problems},
    $$
    u_n(t)=f+\int_0^t \big(J\ast
    \Gamma_n(u_n(s))-\Gamma_n(u_n(s))\big)\ds,
    $$
    for almost every $t>0$. If we extend $\bar u(t)$ to all $t>0$ by continuity, so that it belongs to the space
    $\C^1([0,\infty);\L^1(\mathbb{R}^N))$, we get that  $\bar u$ is the $\L^1$-solution
    to~\eqref{eq:stefan.nolocal} with initial data $f$, i.e., $\bar
    u=u$. As a consequence,  convergence is not restricted to a subsequence.

    Now we turn to the contraction property. Let $u_1$, $u_2$ be the $\L^1$-solutions with initial data $f_1$ and $f_2$ respectively.
    We approximate them by the above procedure, which yields sequences
    $\{u_{n,i}\}$, $i=1,2$, such that $u_{n,i}\to u_i$ in $\L^1_{\rm loc}(\R^N\times(0,\infty))$ (and hence a.e.). The approximations
    satisfy \eqref{eq:contraction.approximate}.
    Using Fatou's lemma to pass to the limit in this last inequality we get that \eqref{eq:contraction.enthalpy} holds for almost every $t\ge0$.
    Finally, since the solutions are in $\C([0,\infty);\L^1(\mathbb{R}^N))$, we deduce that this inequality holds for any $t\ge0$.
\end{proof}

\remark
Equation \eqref{eq:contraction.enthalpy} implies a comparison principle. In particular, if $f\ge0$, we conclude that $u\ge0$. Hence, $u$ is   truly a \emph{one-phase} solution.

The temperature turns out to be subcaloric.
\begin{lemma}
\label{lem:subcaloric}
    Let $f\in\L^1(\mathbb{R}^N)$ and $u$ the corresponding $\L^1$-solution. Then
    the temperature   $v=(u-1)_+$ satisfies
    $v_t\leq J\ast v -v$ in the sense of distributions and a.e. in $\R^N\times(0,\infty)$.
\end{lemma}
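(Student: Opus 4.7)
The plan is to derive the inequality pointwise a.e.\ via the chain rule for the positive part (Stampacchia's lemma), and to obtain the distributional version by integrating against a nonnegative test function.

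By the remark following the definition of $\L^1$-solution, $u\in\C^1([0,\infty);\L^1(\R^N))$, so for a.e.\ $x$ the map $t\mapsto u(x,t)$ is absolutely continuous on bounded intervals and the equation $\partial_t u=J\ast v-v$ holds a.e.\ in $\R^N\times(0,\infty)$. Since $s\mapsto(s-1)_+$ is Lipschitz, $t\mapsto v(x,t)=(u(x,t)-1)_+$ is also absolutely continuous in $t$ for a.e.\ $x$, and Stampacchia's chain rule gives
$$
  \partial_t v=\mathds{1}_{\{u>1\}}\,\partial_t u\qquad\text{a.e.\ in }\R^N\times(0,\infty).
$$
Substituting the equation and using that $v\equiv 0$ on $\{u\leq 1\}$ (so $\mathds{1}_{\{u>1\}}v=v$),
$$
  \partial_t v=\mathds{1}_{\{u>1\}}(J\ast v)-v.
$$
Because $J,v\ge 0$ one has $J\ast v\ge 0$, whence $\mathds{1}_{\{u>1\}}(J\ast v)\le J\ast v$ and the a.e.\ inequality follows. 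The distributional version is then obtained immediately by multiplying by $\varphi\in\C_\mathrm{c}^\infty(\R^N\times(0,\infty))$ with $\varphi\ge 0$, integrating, and transferring $\partial_t$ onto $\varphi$ using that $v\in\C([0,\infty);\L^1(\R^N))$.

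The main potential obstacle is the chain-rule step. A natural alternative would be to work with the approximations $u_n$ from~\eqref{eq:approximate.problems}, which are $\C^1$ in time, and set $v_n:=\Gamma_n(u_n)$, so that $\partial_t v_n=\Gamma_n'(u_n)(J\ast v_n-v_n)$. However, since $\Gamma_n'(u_n)\in[0,1]$, this identity only yields $\partial_t v_n\le J\ast v_n-v_n$ on $\{J\ast v_n-v_n\ge 0\}$ and the \emph{opposite} inequality elsewhere, so the approximation route does not survive the passage to the limit. Applying Stampacchia's rule directly to $v=(u-1)_+$ bypasses this problem, because $v$ vanishes identically precisely where the chain-rule factor would otherwise produce the wrong sign.
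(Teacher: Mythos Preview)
Your proof is correct and follows essentially the same route as the paper: both invoke the $\C^1$-in-time regularity of $u$ to justify the chain rule $\partial_t v=\mathds{1}_{\{u>1\}}\partial_t u$ a.e., and then compare with $\partial_t u=J\ast v-v$ using that $J\ast v\ge 0$ and $\mathds{1}_{\{u>1\}}v=v$. The paper phrases the final step as a case split ($v_t=0\le u_t$ on $\{u\le1\}$, $v_t=u_t$ on $\{u>1\}$) whereas you write it as the single inequality $\mathds{1}_{\{u>1\}}(J\ast v)\le J\ast v$, but these are the same computation.
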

\begin{proof}
    Since $u\in\C^1([0,\infty);\L^1(\R^N))$,
    $$
        v_t=u_t \mathds{1}_{\{u>1\}}\quad \mbox{  a.e.}
    $$
    In the set  $\{u\leq 1\}$ we have $u_t= J\ast(u-1)_+\geq 0$   and $v_t=0$, whereas in the set $\{u>1\}$
    we have $v_t=u_t$ a.e.. In both cases we obtain $v_t\leq u_t=J\ast v -v$ a.e., and in the sense of distributions
    since these are locally integrable functions.
\end{proof}

This property allows to estimate the size of the solution in terms
of the $\L^\infty$-norm of the initial data.
\begin{lemma}
\label{lem:bound.infty}
    Let $f\in\L^1(\mathbb{R}^N)\cap\L^\infty(\mathbb{R}^N)$. Then the $\L^1$-solution $u$ of~\eqref{eq:stefan.nolocal}
    satisfies $\|u(t)\|_{\L^\infty(\R^N)}\leq \|f\|_{\L^\infty(\R^N)}$ for any $t>0$.
    Moreover, $\limsup_{t\to\infty} u(t)\leq1$ a.e.~in~$\R^N$.
\end{lemma}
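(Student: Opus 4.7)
My plan is to exploit the subcaloric inequality for $v=(u-1)_+$ from Lemma~\ref{lem:subcaloric} in two ways: a Gronwall argument for the pointwise $\L^\infty$ bound on $v$, and comparison with the linear nonlocal heat equation for the long-time decay.

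For the first assertion, I set $V(t):=\|v(t)\|_{\L^\infty(\R^N)}$. From $v_t\leq J\ast v - v$ and the pointwise bound $\|J\ast v(t)\|_{\L^\infty}\leq V(t)$, I obtain $v_t+v\leq V(t)$ a.e. Multiplying by $e^t$, integrating on $[0,t]$, and taking the essential supremum in $x$ yields $e^t V(t)\leq V(0)+\int_0^t e^s V(s)\ds$, and Gronwall's inequality closes this to $V(t)\leq V(0)=\|(f-1)_+\|_{\L^\infty}$. Since $u\leq v+1$ a.e.\ (as $u-1=v$ where $u>1$ and $u\leq 1\leq v+1$ elsewhere), I conclude $\|u(t)\|_{\L^\infty}\leq 1+V(t)\leq \|f\|_{\L^\infty}$ in the case $\|f\|_{\L^\infty}\geq 1$. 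In the remaining case $\|f\|_{\L^\infty}<1$, one has $V(0)=0$; integrating the subcaloric inequality in space (using $\int J\ast v=\int v$) then forces $v\equiv 0$, hence $u\equiv f$ and the bound becomes an equality.

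For the long-time decay, I introduce $V^\ast$, the solution of the linear nonlocal heat equation $V^\ast_t=J\ast V^\ast-V^\ast$ with initial datum $(f-1)_+\in\L^1\cap\L^\infty$ (existence by a contraction argument analogous to Theorem~\ref{thm:stefan.noloc.L1}). Because $v$ satisfies the corresponding subsolution inequality with the same initial datum, the standard comparison argument applied to $w:=(v-V^\ast)_+$---multiply $(v-V^\ast)_t\leq J\ast(v-V^\ast)-(v-V^\ast)$ by $\mathds{1}_{\{v>V^\ast\}}$, use $J\ast(v-V^\ast)\leq J\ast w$, and integrate to obtain $\tfrac{d}{dt}\int w\leq 0$ with $w(0)=0$---yields $v\leq V^\ast$ a.e. The claim then follows at once from the pointwise a.e.\ decay $V^\ast(x,t)\to 0$ as $t\to\infty$, via $u\leq v+1\leq V^\ast+1$.

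The main obstacle is precisely this a.e.\ pointwise decay of $V^\ast$: it is the refined asymptotics for the linear nonlocal heat equation that the authors have announced will be proved in the appendix, and I would cite it from there. A secondary technical point is the measurability in $t$ and local boundedness of $V(\cdot)$ needed for Gronwall; I would circumvent this by first running the argument on the Lipschitz approximations $u_n$ of~\eqref{eq:approximate.problems}, for which $\L^\infty$ bounds follow from fixed-point theory in $\L^\infty\cap\L^1$, and then passing to the limit in $n$ using the convergence $u_n\to u$ established in the proof of Corollary~\ref{cor:contraction.stefan.noloc}.
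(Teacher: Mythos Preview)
Your proof follows essentially the same route as the paper's. Both hinge on the subcaloric inequality for $v$ (Lemma~\ref{lem:subcaloric}): for the $\L^\infty$ bound the paper simply cites \cite[Proposition~3.1]{BrandleChasseigneFerreira2010} to obtain $\|v(t)\|_{\L^\infty}\leq\|(f-1)_+\|_{\L^\infty}$, whereas you supply a direct Gronwall argument; for the long-time statement both compare $v$ with the solution of the linear nonlocal heat equation with datum $(f-1)_+$ and invoke the appendix (Theorem~\ref{app:thm:refined}) for its $t^{-N/2}$ decay. So your write-up is effectively a self-contained version of the paper's argument.

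One caution about your workaround via the approximations~$u_n$. The function $v_n=\Gamma_n(u_n)$ is \emph{not} subcaloric in general: since $\partial_t v_n=\Gamma_n'(u_n)(J\ast v_n-v_n)$ and $\Gamma_n'<1$ on part of the range, the inequality $\partial_t v_n\leq J\ast v_n-v_n$ can fail where $J\ast v_n-v_n<0$. Your Gronwall step, however, only needs $\partial_t v_n+v_n\leq V_n(t)$, and this \emph{does} hold, because
\[
\partial_t v_n+v_n=\Gamma_n'(u_n)\,(J\ast v_n)+(1-\Gamma_n'(u_n))\,v_n
\]
is a convex combination of two quantities each bounded by $V_n(t)$. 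So the detour is salvageable with this remark. Alternatively, notice that the comparison $v\leq V^\ast$ you already set up for the second part immediately gives $\|v(t)\|_{\L^\infty}\leq\|V^\ast(t)\|_{\L^\infty}\leq\|(f-1)_+\|_{\L^\infty}$ via the maximum principle for the linear equation, so both assertions can be handled by that single comparison.
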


\begin{proof}
    The result is obvious if $\|f\|_{\L^\infty(\R^N)}\leq 1$, since in this case $u(t)=f$ for any $t>0$. So let us assume that
    $\|f\|_{\L^\infty(\R^N)}>1$.
    Since $v$ is subcaloric and locally integrable we may use \cite[Proposition~3.1]{BrandleChasseigneFerreira2010} (with a compactly supported kernel),
    to obtain that
    $$0\leq \|v(t)\|_{\L^\infty(\R^N)}\leq\|(f-1)_+\|_{\L^\infty(\R^N)}=\|f\|_{\L^\infty(\R^N)}-1.$$
    Therefore,  $\|u(t)\|_{\L^\infty(\R^N)}\leq 1+\|v(t)\|_{\L^\infty(\R^N)}\leq\|f\|_{\L^\infty(\R^N)}$.

    We can also compare $v$ with the solution $w$ of the following problem:
    $$w_t=J\ast w -w,\quad w(0)=(f-1)_+\in\L^1(\mathbb{R}^N)\cap\L^\infty(\mathbb{R}^N).$$
    Using Theorem~\ref{app:thm:refined} in the Appendix, we obtain that the solution $v$ goes to zero asymptotically
    like $ct^{-N/2}$, so that $(u-1)_+\to0$ almost everywhere, which implies the result.
\end{proof}

%%%%%%%%%%%%%%%%%%%%%%%%%%%%%%%%%%%%%%
\subsection{$\mathrm{BC}$ theory}

We now develop a theory in the class of continuous and bounded
functions whenever the initial data $f$ belong to that class.

\begin{definition} Let $f\in\mathrm{BC}(\mathbb{R}^N)$. The function $u$ is a
$\mathrm{BC}$-solution of~\eqref{eq:stefan.nolocal} if $u\in
\mathrm{BC}(\mathbb{R}^N\times[0,T])$ for all $T\in(0,\infty)$ and
$$
        u(x,t)=f(x)+\int_0^t \big(J\ast v(x,s)-v(x,s)\big)\ds,\quad v=(u-1)_+
$$
for all $x\in\mathbb{R}^N$ and $t\in[0,\infty)$.
\end{definition}

Notice that if $u$ is a $\mathrm{BC}$-solution, then $u_t$ is
continuous. Hence equation~\eqref{eq:stefan.nolocal} is satisfied
for all $x$ and $t$, and $u$ is a \emph{classical} solution.

\begin{theorem}\label{thm:stefan.noloc.BC}
    For any $f\in\mathrm{BC}(\mathbb{R}^N)$ there exists a unique $\mathrm{BC}$-solution of~\eqref{eq:stefan.nolocal}.
\end{theorem}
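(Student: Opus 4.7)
The plan is to mimic the proof of Theorem~\ref{thm:stefan.noloc.L1}, replacing the space $\mathcal{B}_{t_0}=\C([0,t_0];\L^1(\mathbb{R}^N))$ with the Banach space
$$
\mathcal{A}_{t_0}=\mathrm{BC}(\mathbb{R}^N\times[0,t_0]),\qquad \|u\|_\infty=\sup_{(x,t)\in\mathbb{R}^N\times[0,t_0]}|u(x,t)|,
$$
and to apply Banach's fixed point theorem to the map
$$
(\mathcal{T}_f u)(x,t)=f(x)+\int_0^t\bigl(J\ast(u(\cdot,s)-1)_+(x)-(u(x,s)-1)_+\bigr)\ds.
$$

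First I would check that $\mathcal{T}_f$ sends $\mathcal{A}_{t_0}$ into itself. Boundedness is immediate from $\|J\|_{\L^1}=1$: if $\|u\|_\infty\le M$ then $\|(u-1)_+\|_\infty\le M$, so $\|\mathcal{T}_f u\|_\infty\le\|f\|_\infty+2t_0 M$. For continuity, note that if $u\in\mathcal{A}_{t_0}$ then $v=(u-1)_+$ is bounded and uniformly continuous on compact subsets in $x$ for each fixed $s$; since $v$ is bounded and $J\in\C_{\mathrm c}(\mathbb{R}^N)$, the convolution $J\ast v(\cdot,s)$ is continuous in $x$ uniformly in $s$, and the integrand is continuous jointly in $(x,s)$, so the time integral is continuous in $(x,t)$.

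Next I would establish the contraction estimate. Using the Lipschitz bound $|(a-1)_+-(b-1)_+|\le|a-b|$ and $\|J\|_{\L^1}=1$,
$$
|\mathcal{T}_f\varphi(x,t)-\mathcal{T}_f\psi(x,t)|\le\int_0^t\bigl(|J\ast((\varphi-1)_+-(\psi-1)_+)(x,s)|+|(\varphi-1)_+-(\psi-1)_+|(x,s)\bigr)\ds
$$
$$
\le 2\int_0^t\|\varphi(\cdot,s)-\psi(\cdot,s)\|_\infty\ds\le 2t_0\|\varphi-\psi\|_\infty.
$$
Hence for $t_0<1/2$ the map $\mathcal{T}_f$ is a strict contraction on $\mathcal{A}_{t_0}$, and Banach's fixed point theorem yields a unique solution $u\in\mathcal{A}_{t_0}$ on the time interval $[0,t_0]$.

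Since $t_0$ depends only on a universal constant (namely, $1/2$) and not on $\|f\|_\infty$, we may iterate: take $u(\cdot,t_0)\in\mathrm{BC}(\mathbb{R}^N)$ as new initial data, obtain a unique $\mathrm{BC}$-solution on $[t_0,2t_0]$, and glue. The gluing is consistent because on overlaps the extended function continues to satisfy the integral identity with the original $f$ (the integral from $0$ to $t$ splits additively), and uniqueness on each subinterval forces the pieces to agree. This yields a unique $\mathrm{BC}$-solution on $[0,\infty)$. The main technical point to be careful with is the joint continuity in $(x,t)$ when verifying that $\mathcal{T}_f$ preserves $\mathcal{A}_{t_0}$; everything else is a direct transcription of the $\L^1$ argument with the sup norm replacing the integral norm.
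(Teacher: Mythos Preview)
Your proposal is correct and follows essentially the same approach as the paper: the same Banach space $\mathrm{BC}(\mathbb{R}^N\times[0,t_0])$, the same operator $\mathcal{T}_f$, the same contraction estimate $\|\mathcal{T}_f\varphi-\mathcal{T}_f\psi\|_\infty\le 2t_0\|\varphi-\psi\|_\infty$, and the same iteration in time. In fact you are slightly more careful than the paper, since you explicitly verify that $\mathcal{T}_f$ maps $\mathcal{A}_{t_0}$ into itself (the boundedness and joint continuity check), which the paper takes for granted.
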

\begin{proof}
As in the case of $\L^1$-solutions, existence and uniqueness follow
from a fixed-point argument.

We start by proving existence and uniqueness in a small time
interval $[0,t_0]$. We define the operator
$\mathcal{T}:\mathrm{BC}(\mathbb{R}^N\times[0,t_0])\to\mathrm{BC}(\mathbb{R}^N\times[0,t_0])$
through
$$
  (\mathcal{T}_{f}u)(x,t)=f(x)+\int_0^t \left(J*(u-1)_+(x,s)-(u-1)_+(x,s)\right)\ds.
$$
This operator is contractive if $t_0<1/2$, which implies the local
existence and uniqueness result. Indeed, a similar computation to
that of the proof of Theorem~\ref{thm:stefan.noloc.L1} yields
$$
|\mathcal{T}_{f}\varphi-\mathcal{T}_{f}\psi|(x,t) \leq 2t_0
  \max_{\mathbb{R}^N\times[0,t_0]} |\varphi-\psi|,\qquad t\in[0,t_0].
$$
By iteration, taking as initial data
$u(x,t_0)\in\mathrm{BC}(\mathbb{R}^N)$, we obtain existence  and
uniqueness for $[0,2t_0]$ and hence for all times.
\end{proof}

The $\mathrm{BC}$-solutions depend continuously on the initial
data.
\begin{lemma}\label{lem:cont.dep.bc}
  Let $u_1$ and $u_2$ be the $\mathrm{BC}$-solutions with initial data $f_1$, $f_2\in\mathrm{BC}(\mathbb{R}^N)$ respectively .
  Then, for all $T\in(0,\infty)$
  there  exists a constant $C=C(T)$ such that
  $$
    \max_{x\in\mathbb{R}^N}|u_1-u_2 |(x,t) \leq
    C(T)\max_{\mathbb{R}^N}|f_1-f_2|,\qquad t\in[0,T].
  $$
\end{lemma}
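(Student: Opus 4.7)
The plan is to subtract the integral equations satisfied by $u_1$ and $u_2$, estimate pointwise in $x$, take the supremum and close the argument with a standard Gronwall estimate. Concretely, writing $v_i=(u_i-1)_+$, from the definition of a $\mathrm{BC}$-solution we have, for every $x\in\mathbb{R}^N$ and $t\in[0,T]$,
\begin{equation*}
(u_1-u_2)(x,t)=(f_1-f_2)(x)+\int_0^t\bigl(J\ast(v_1-v_2)(x,s)-(v_1-v_2)(x,s)\bigr)\ds.
\end{equation*}

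Two elementary facts drive the estimate. First, since the map $r\mapsto(r-1)_+$ is $1$-Lipschitz, one has the pointwise bound $|v_1-v_2|(x,s)\leq|u_1-u_2|(x,s)$, hence
\begin{equation*}
\|v_1(s)-v_2(s)\|_{\L^\infty(\mathbb{R}^N)}\leq\|u_1(s)-u_2(s)\|_{\L^\infty(\mathbb{R}^N)}.
\end{equation*}
Second, because $J\geq 0$ with $\int J=1$, the convolution $J\ast(\cdot)$ is a contraction in $\L^\infty(\mathbb{R}^N)$, so
\begin{equation*}
\|J\ast(v_1-v_2)(s)\|_{\L^\infty(\mathbb{R}^N)}\leq\|v_1(s)-v_2(s)\|_{\L^\infty(\mathbb{R}^N)}.
\end{equation*}

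Plugging these two bounds into the integral identity and taking the supremum in $x$ gives
\begin{equation*}
\|u_1(t)-u_2(t)\|_{\L^\infty(\mathbb{R}^N)}\leq\|f_1-f_2\|_{\L^\infty(\mathbb{R}^N)}+2\int_0^t\|u_1(s)-u_2(s)\|_{\L^\infty(\mathbb{R}^N)}\ds.
\end{equation*}
Since $u_1,u_2\in\mathrm{BC}(\mathbb{R}^N\times[0,T])$, the function $t\mapsto\|u_1(t)-u_2(t)\|_{\L^\infty}$ is bounded on $[0,T]$, so Gronwall's lemma applies and yields
\begin{equation*}
\|u_1(t)-u_2(t)\|_{\L^\infty(\mathbb{R}^N)}\leq\|f_1-f_2\|_{\L^\infty(\mathbb{R}^N)}\,\mathrm{e}^{2t},\qquad t\in[0,T],
\end{equation*}
which is exactly the claimed inequality with $C(T)=\mathrm{e}^{2T}$.

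I expect no serious obstacle here: the only delicate issue is ensuring that the supremum in $x$ is taken inside the time integral (justified by Fubini or by directly invoking pointwise monotonicity of the integrand with respect to its $\L^\infty$ majorant), and that the Gronwall step is legitimate, which is guaranteed by the a~priori $\mathrm{BC}$-regularity of both solutions on $\mathbb{R}^N\times[0,T]$. Notice that the same computation reproduces the contractivity of $\mathcal{T}_f$ used in Theorem~\ref{thm:stefan.noloc.BC}, so this is essentially a continuous-dependence repackaging of the fixed-point estimate.
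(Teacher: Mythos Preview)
Your proof is correct and rests on the same core estimate as the paper: the $1$-Lipschitz bound on $r\mapsto(r-1)_+$ together with $\|J\ast g\|_{\L^\infty}\le\|g\|_{\L^\infty}$ gives a factor~$2$ in front of the time integral. The only difference is in the packaging: the paper closes the estimate by the discrete iteration inherited from the fixed-point argument of Theorem~\ref{thm:stefan.noloc.BC} (choosing $t_0=1/4$ so that $2t_0<1$, obtaining a factor~$2$ per step, and iterating to get $C(T)=2^{4T}$), whereas you apply Gronwall directly and obtain $C(T)=\e^{2T}$. Your route is marginally cleaner and yields a slightly better constant; the paper's iteration has the small advantage of reusing verbatim the contractivity computation already done, at the cost of a worse $C(T)$.
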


\begin{proof} Since $u_i$ is a fixed point of the operator
$\mathcal{T}_{f_i}$, we have (see the proof of
Theorem~\ref{thm:stefan.noloc.BC})
$$
 |u_1-u_2|(x,t)
 \leq
 |f_1-f_2|(x)+2t_0\max_{\mathbb{R}^N\times[0,t_0]}|u_1-u_2|,\quad
 x\in\mathbb{R}^N, t\in[0,t_0].
$$
Taking $t_0=1/4$, we get
$$
\max_{\mathbb{R}^N\times[0,1/4]}|u_1-u_2|\leq 2 \max_{\mathbb{R}^N}
 |f_1-f_2|,
$$
from where the result follows by iteration, with a constant
$C(T)=2^{4T}$.
\end{proof}

We also have a control of the size of the solutions in terms of the
initial data. The proof is identical to the one for
$\L^1$-solutions.

\begin{lemma}\label{lem:subcaloric.linfty.bc}
Let $u$ be the $\mathrm{BC}$-solution $u$ of~\eqref{eq:stefan.nolocal} with initial data $f\in\mathrm{BC}(\R^N)$, and let $v$ be the corresponding temperature. Then, $v$ is subcaloric, and $\|u(\cdot,t)\|_{\L^\infty(\R^N)}\leq \|f\|_{\L^\infty(\R^N)}$ for any $t>0$.
\end{lemma}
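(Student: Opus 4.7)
The plan is to mirror the proofs of Lemma~\ref{lem:subcaloric} and Lemma~\ref{lem:bound.infty}, with the simplifying observation that a $\mathrm{BC}$-solution is classical: by the remark following the definition of $\mathrm{BC}$-solution, both $u$ and $u_t$ are continuous on $\mathbb{R}^N \times [0,\infty)$, and the equation holds pointwise. In particular, $v = (u-1)_+$ is continuous and locally bounded, which will be more than enough to invoke the maximum principle machinery already used in the $\L^1$ setting.

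For the subcaloric property, I would split $\mathbb{R}^N\times(0,\infty)$ into the two open sets $\{u>1\}$ and $\{u<1\}$. On $\{u>1\}$, $v = u-1$ is $\C^1$ in $t$, and $v_t = u_t = J\ast v - v$. On $\{u<1\}$, $v\equiv 0$, so $v_t = 0$ while $J\ast v - v = J\ast v \ge 0$ because $v\ge 0$. Combining via the chain rule for the positive part, one obtains $v_t = u_t \mathds{1}_{\{u>1\}}$ pointwise a.e., hence
\[
  v_t = (J\ast v - v)\,\mathds{1}_{\{u>1\}} = (J\ast v - v) - (J\ast v - v)\,\mathds{1}_{\{u\le 1\}} \le J\ast v - v,
\]
since $(J\ast v - v)\mathds{1}_{\{u\le 1\}} = (J\ast v)\,\mathds{1}_{\{v=0\}}\ge 0$. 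The inequality then extends to the sense of distributions because all functions involved are continuous and bounded.

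For the $\L^\infty$ estimate, if $\|f\|_{\L^\infty(\R^N)}\le 1$ then $(f-1)_+\equiv 0$; the constant-in-time function $u(x,t)\equiv f(x)$ is a $\mathrm{BC}$-solution, and uniqueness (Theorem~\ref{thm:stefan.noloc.BC}) gives the claim trivially. If $\|f\|_{\L^\infty(\R^N)}>1$, I apply \cite[Proposition~3.1]{BrandleChasseigneFerreira2010} to the bounded subcaloric function $v$ to deduce
\[
  \|v(\cdot,t)\|_{\L^\infty(\R^N)}\le \|(f-1)_+\|_{\L^\infty(\R^N)} = \|f\|_{\L^\infty(\R^N)}-1.
\]
Finally, observe the pointwise bound $u \le 1 + v$ (trivial when $u\le 1$ and an equality when $u>1$), which yields $\|u(\cdot,t)\|_{\L^\infty(\R^N)}\le \|f\|_{\L^\infty(\R^N)}$.

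The only delicate point is justifying the distributional inequality $v_t\le J\ast v - v$; the set $\{u=1\}$ could in principle carry a singular contribution, but since here $u_t$ is continuous and $v$ is the positive part of the continuous function $u-1$, the chain rule identity $v_t = u_t\mathds{1}_{\{u>1\}}$ holds almost everywhere and extends to distributions by continuity, removing the main obstacle that appeared in the $\L^1$ case.
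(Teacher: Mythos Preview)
Your proposal is correct and follows the same approach as the paper, which simply states that the proof is identical to that of Lemmas~\ref{lem:subcaloric} and~\ref{lem:bound.infty}: the subcaloric inequality comes from the chain rule $v_t=u_t\mathds{1}_{\{u>1\}}$ together with $u_t=J\ast v - v\ge 0$ on $\{u\le 1\}$, and the $\L^\infty$ bound then follows from \cite[Proposition~3.1]{BrandleChasseigneFerreira2010} applied to the bounded subcaloric $v$. Your algebraic rewriting $v_t=(J\ast v - v)-(J\ast v)\mathds{1}_{\{v=0\}}$ is just a repackaging of the paper's case split, and your extra care about the set $\{u=1\}$ is appropriate but not a departure from the original argument.
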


%%%%%%%%%%%%%%%%%%%%%%%%%%%%%%%%%%%%%%%%%%%%%
\section{Free boundaries and mushy regions}
\setcounter{equation}{0}
\newcommand{\Dsupp}{\supp_{\mathcal{D}'}}

In the sequel, unless
we say explicitly  something different, we will be dealing with $\L^1$-solutions.
Since the functions we are handling
are in general not continuous in the space variable, their
positivity sets have to be considered in the distributional sense.
To be precise, for any locally integrable and nonnegative function
$g$ in $\R^N$, we can consider the distribution $T_g$ associated to
the function $g$. Then the distributional support of $g$,
$\Dsupp(g)$ is defined as the support of $T_g$:
$$
\Dsupp(g):=\R^N\setminus\mathcal{O},\quad\text{where
}\mathcal{O}\subset\R^N \text{ is the biggest open set such that
}T_g|_\mathcal{O}\equiv 0.
$$
In the case of nonnegative functions $g$, this means that
$x\in\Dsupp(g)$ if and only if
$$
\forall\varphi\in\mathrm{C}^\infty_\textrm{c}(\R^N),\
\varphi\geq0\text{ and }\varphi(x)>0\Longrightarrow \int_{\R^N}
g(y)\varphi(y)\dy>0.
$$
If $g$ is continuous, then the support of $g$ is nothing but the
usual closure of the positivity set, $\Dsupp(g)=\overline{\{g>0\}}$.

\subsection{Existence of free boundaries}

We first prove that the solution does not move  far away from the
support of $v$.

\begin{lemma}\label{lem:ut=0}
    Let $f\in\L^1_+(\R^N)$. Then,
    $$
    \Dsupp(u_t(t))\subset \Dsupp(v(t))+B_{R_J}\ \text{ for any }
    t\geq0.
    $$
\end{lemma}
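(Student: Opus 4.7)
My plan is to start from the identity $u_t(t) = J\ast v(t) - v(t)$, which holds in $\L^1(\R^N)$ by the remark following the definition of $\L^1$-solution, and to transfer the support information from the right-hand side to $u_t(t)$. The only non-local piece is $J\ast v(t)$, and its support is enlarged over that of $v(t)$ by at most $\overline{B_{R_J}}$; so I expect the conclusion to fall out with an at-most-$R_J$ enlargement of $\Dsupp(v(t))$.

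To make this rigorous at the level of distributions, I would fix $t\ge0$, set $K:=\Dsupp(v(t))$, and take an arbitrary $x_0\notin K+\overline{B_{R_J}}$. Because this Minkowski sum is closed (closed plus compact), there exists $r>0$ such that $\dist\bigl(\overline{B(x_0,r)},K\bigr)>R_J$. The goal is then to show $\langle u_t(t),\varphi\rangle=0$ for every $\varphi\in\C^\infty_\mathrm{c}(B(x_0,r))$, which forces $x_0\notin\Dsupp(u_t(t))$.

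For such a $\varphi$, I would write
\begin{equation*}
\langle u_t(t),\varphi\rangle
=\int_{\R^N}(J\ast v(t))(y)\,\varphi(y)\dy\;-\;\int_{\R^N}v(y,t)\varphi(y)\dy.
\end{equation*}
The second integral vanishes at once, since $\supp\varphi\subset B(x_0,r)$ is disjoint from $K=\Dsupp(v(t))$. For the first, Fubini's theorem (trivially applicable as $v(t)\in\L^1$ and $J,\varphi$ are bounded and compactly supported) together with the radial symmetry of $J$ turns it into $\int v(z,t)(J\ast\varphi)(z)\dz$; and $\supp(J\ast\varphi)\subset\overline{B_{R_J}}+\overline{B(x_0,r)}$ is disjoint from $K$ by the choice of $r$, so this term vanishes as well.

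I do not anticipate any genuinely hard step. The only point requiring care is that $v(t)$ (and hence $u_t(t)$) need not be continuous, so the argument must be phrased in terms of distributional supports rather than pointwise values; but the test-function characterization recalled in the definition of $\Dsupp$ is precisely what makes the computation above conclusive, and it applies equally well to the signed $\L^1$ function $u_t(t)$.
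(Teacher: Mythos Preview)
Your argument is correct and follows essentially the same route as the paper: test $u_t(t)=J\ast v(t)-v(t)$ against functions supported outside $\Dsupp(v(t))+B_{R_J}$ and observe that both terms vanish. The only cosmetic difference is that the paper states directly that $\supp(J\ast v(t))\subset\Dsupp(v(t))+B_{R_J}$, whereas you transfer the convolution onto the test function via Fubini and the symmetry of $J$; these are equivalent ways of saying the same thing.
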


\begin{proof}
Recall first that the equation holds down to $t=0$ so that we may consider here $t\geq0$ (and not only $t>0$).
Let $\varphi\in\mathrm{C}^{\infty}_{\rm c}(A^c)$, where
$A=\Dsupp(v(t))+B_{R_J}$. Notice  that the support of $J\ast
v$ (which is a continuous function) lies inside $A$, so that
$$
\int_{\R^N} (J\ast v)\varphi=0.
$$
Similarly, the supports of $v$ and $\varphi$ do not intersect, so
that
$$
\int_{\R^N} u_t\varphi=\int_{\R^N} (J\ast v)\varphi-\int_{\R^N}
v\varphi=0,
$$
which means that the support of $u_t$ is contained in $A$.
\end{proof}

As a direct consequence, we get the finite speed of propagation
property.
\begin{theorem}\label{thm:free.bndry}
    Let $f\in\L^1_+(\mathbb{R}^N)$ and compactly supported. Then, for any $t>0$, the solution
    $u(t)$ and the corresponding temperature $v(t)$ are compactly supported.
\end{theorem}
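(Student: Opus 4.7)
The plan is to bootstrap Lemma~\ref{lem:ut=0} via a time iteration: on short enough intervals the distributional support of $v$ cannot escape $\Dsupp(f)$, and then Lemma~\ref{lem:ut=0} confines $\Dsupp(u_t)$, and hence $\Dsupp(u)$, to a fixed enlargement of $\Dsupp(f)$. The quantitative input is Theorem~\ref{corol:mass.conservation}: since $0\le v\le u$ and $u\ge0$ by Corollary~\ref{cor:contraction.stefan.noloc}, one has $\|v(s)\|_{\L^1(\R^N)}\le\|f\|_{\L^1(\R^N)}$ for all $s\ge 0$, whence $\|J\ast v(s)\|_{\L^\infty(\R^N)}\le M:=\|J\|_{\L^\infty(\R^N)}\|f\|_{\L^1(\R^N)}$ uniformly in $s$.

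Set $K_0:=\Dsupp(f)$, which is compact by assumption, and $T^{\ast}:=1/M$. For a.e.\ $x\notin K_0$ one has $f(x)=0$, so~\eqref{eq:form.int} combined with $v\ge 0$ gives
$$
u(x,t)=\int_0^t\bigl(J\ast v(s)-v(s)\bigr)(x)\ds\le\int_0^t J\ast v(x,s)\ds\le Mt.
$$
Hence for $t\in[0,T^{\ast}]$ and a.e.\ $x\notin K_0$ we obtain $u(x,t)\le 1$, i.e.\ $v(x,t)=0$. Because $v\ge 0$, this a.e.\ vanishing implies $\Dsupp(v(t))\subset K_0$ for every $t\in[0,T^{\ast}]$.

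Combining with Lemma~\ref{lem:ut=0} yields $\Dsupp(u_t(s))\subset K_0+B_{R_J}$ for all $s\in[0,T^{\ast}]$. Testing~\eqref{eq:form.int} against any $\varphi\in\C_{\mathrm c}^\infty$ supported in the complement of $K_0+B_{R_J}$ and applying Fubini shows $\Dsupp(u(t))\subset K_0+B_{R_J}$ for $t\in[0,T^{\ast}]$, which is compact. To iterate, I restart the problem at time $T^{\ast}$ with initial datum $u(T^{\ast})$: by Corollary~\ref{cor:contraction.stefan.noloc} it lies in $\L^1_+(\R^N)$, by what we have just shown it is compactly supported, and by Theorem~\ref{corol:mass.conservation} it has the same $\L^1$-norm as $f$, so the same $M$ and $T^{\ast}$ remain valid. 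A straightforward induction gives $\Dsupp(u(t))\subset K_0+B_{(k+1)R_J}$ for $t\in[k T^{\ast},(k+1)T^{\ast}]$, so both $u(t)$ and $v(t)=(u(t)-1)_+$ are compactly supported for every $t>0$.

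The main point requiring care is the passage between pointwise (a.e.) vanishing and vanishing of the distributional support; since $v\ge 0$, the equivalence is immediate, so no additional regularity of $u$ is needed. As a byproduct the argument yields the explicit linear-in-time bound $\Dsupp(u(t))\subset K_0+B_{R_J(1+\lfloor t/T^{\ast}\rfloor)}$, i.e.\ a finite speed of propagation with speed controlled by $R_J\|J\|_{\L^\infty(\R^N)}\|f\|_{\L^1(\R^N)}$.
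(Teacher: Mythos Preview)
Your proof is correct and follows the same three-step strategy as the paper: bound $\|J\ast v(s)\|_{\L^\infty}$ uniformly by $M=\|J\|_{\L^\infty}\|f\|_{\L^1}$, deduce $\Dsupp(v(t))\subset\Dsupp(f)$ on $[0,1/M]$, invoke Lemma~\ref{lem:ut=0} to obtain $\Dsupp(u(t))\subset\Dsupp(f)+B_{R_J}$, and iterate using that the $\L^1$-norm is conserved so the time step does not shrink. The only cosmetic difference is that the paper establishes the first inclusion by testing~\eqref{eq:form.int} against $\varphi\in\C_{\mathrm c}^\infty\big((\Dsupp f)^c\big)$ and then approximating $\sign_+(u-1)$, whereas you argue pointwise a.e.\ directly; the substance, the constants, and the resulting linear-in-time support bound are identical.
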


\begin{proof}\textsc{Estimate of the support of $v$. }
    Notice first that
    $$
    \big(J*(u-1)_+\big)(x,t)\leq \|J\|_{\L^\infty(\R^N)}\|(u-1)_+\|_{\L^1(\R^N)}\leq  \|J\|_{\L^\infty(\R^N)}\|f\|_{\L^1(\R^N)}:=c_0,
    $$
    where we have used the $\L^1$-contraction property of the equation for the last estimate.
    Multiplying \eqref{eq:form.int} by a nonnegative test function $\varphi\in\mathrm{C}^\infty_\textrm{c}\left((\Dsupp f)^c\right)$ and integrating in space and time we have
    $$
    \int_{\R^N} u(t)\varphi\le
    \int_0^t\int_{\R^N} \big(J*(u(t)-1)_+\big)\varphi \leq c_0\, t\,\int_{\R^N} \varphi\,.
    $$
    Taking $t_0:=1/c_0$, we get
    $\int_{\R^N} (u(t)-1)\varphi\leq 0$ for all $t\in[0,t_0]$. Using an approximation $\varphi\chi_n$ where $\chi_n\to\sign_+(u-1)$,
    we deduce that $\int_{\R^N}  (u(t)-1)_+\varphi= 0$,
    so that
    \begin{equation}
    \label{eq:estimate.support.v.small.times}
    \Dsupp (v(t))\subset \Dsupp(f), \qquad t\in[0,t_0].
    \end{equation}

    \noindent\textsc{Estimate of the support of $u$. }
    Lemma~\ref{lem:ut=0} implies then that
    $$
    \Dsupp(u_t(t))\subset \Dsupp(f)+B_{
        R_J},\qquad t\in[0,t_0].
    $$
    This means that for any $\varphi\in\mathrm{C}^\infty_{\rm c}\left(\left(\Dsupp(f)+B_{
        R_J}\right)^c\right)$ we have
    $$
        \int_{\mathbb{R}^N}  u(t)\varphi=\int_0^t\int_{\mathbb{R}^N}  u_t(t)\varphi
        =0,\qquad t\in[0,t_0],
    $$
    that is,
    \begin{equation}
    \label{eq:estimate.supp.u.above}
    \Dsupp(u(t))\subset \Dsupp(f)+B_{
        R_J},\qquad t\in[0,t_0].
    \end{equation}

   \noindent\textsc{Iteration. } Notice that $t_0$ depends on the initial data $f$ only through its $\L^1$ norm.
    Hence, since the $\L^1$ norm of the enthalpy is time invariant,  the arguments can be
    iterated to obtain the result for all times.
\end{proof}

\remark The same argument can be used for initial data which are not compactly supported, to show that some positive time will pass before the temperature becomes positive at any given point in
the complement of the support of $(f-1)_+$.

The last two results have counterparts for $\mathrm{BC}$-solutions.
\begin{theorem}\label{th:fsp.bc}
Let $f\in\mathrm{BC}_+(\R^N)$, and let $u$ be the corresponding
$\mathrm{BC}$-solution. Then:
\begin{itemize}
\item[\rm(i)]
    $u_t(x,t)=0$ for any $x\in (\supp(v(\cdot,t))+B_{R_J})^c$, $t\ge0$.

\item[\rm(ii)] If $\sup_{|x|\ge R}f(x)<1$ for some $R>0$,   then $v(\cdot,t)$ is compactly supported for
all $t>0$. If moreover $f\in\C_{\rm c}(\R^N)$, then
$u(\cdot,t)$ is also compactly supported for all $t>0$.
    \end{itemize}
\end{theorem}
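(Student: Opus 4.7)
The plan is to establish (i) by direct pointwise inspection of the classical equation, and (ii) by an induction growing the support radius of $v$ by $R_J$ in each uniform time step, concluding with an application of (i) to lift compact support from $v$ to $u$.

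For (i), fix $t\ge 0$ and any $x$ with $\dist(x,\supp v(\cdot,t))\ge R_J$. By continuity of $v$ together with $x\notin\supp v(\cdot,t)$ we have $v(x,t)=0$; moreover, whenever $J(x-y)\ne 0$ we must have $|x-y|<R_J$ (since $J$ is continuous and vanishes outside $\overline{B_{R_J}}$), hence $y\notin\supp v(\cdot,t)$ and $v(y,t)=0$. The pointwise equation, legitimate because $u\in\mathrm{BC}$, then gives $u_t(x,t)=(J\ast v)(x,t)-v(x,t)=0$.

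For the first assertion of (ii), set $\delta:=1-\sup_{|x|\ge R}f(x)>0$; if $\|f\|_{\L^\infty(\R^N)}\le 1$ there is nothing to prove (then $v\equiv 0$), so assume $M:=\|f\|_{\L^\infty(\R^N)}-1>0$. Lemma~\ref{lem:subcaloric.linfty.bc} furnishes $\|v(\cdot,s)\|_{\L^\infty(\R^N)}\le M$, and hence $\|J\ast v(\cdot,s)\|_{\L^\infty(\R^N)}\le M$ uniformly in $s\ge0$. With $t_\ast:=\delta/M$, I would prove by induction on $n\ge 0$ that
\[
  \supp v(\cdot,s)\subset\overline{B_{R+nR_J}}\qquad\text{for all }s\in[0,(n+1)t_\ast].
\]
The base case follows by integrating the equation at a point $|x|\ge R$, where $f(x)\le 1-\delta$:
\[
  u(x,s)\le f(x)+\int_0^s (J\ast v)(x,\tau)\dtau\le (1-\delta)+Mt_\ast=1,
\]
so that $v(x,s)=0$. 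For the inductive step, at $|x|>R+(n+1)R_J$ one splits the time integral at $(n+1)t_\ast$: on $[0,(n+1)t_\ast]$ the hypothesis together with $\supp J\subset\overline{B_{R_J}}$ kills $(J\ast v)(x,\tau)$, while on $[(n+1)t_\ast,s]$ one invokes the crude bound $M$ over a length at most $t_\ast$, recovering $u(x,s)\le 1$ and $v(x,s)=0$. For any $t>0$, picking $n$ with $(n+1)t_\ast\ge t$ shows that $\supp v(\cdot,t)$ is bounded.

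For the second assertion, assume additionally $\supp f\subset\overline{B_{R_0}}$. The quantity $\rho(t):=\sup_{s\in[0,t]}\sup\{|x|:v(x,s)>0\}$ is finite by the previous step, and for every $|x|>\rho(t)+R_J$ and every $s\in[0,t]$ one has $\dist(x,\supp v(\cdot,s))>R_J$, so (i) yields $u_t(x,s)=0$ throughout $[0,t]$; integrating, $u(x,t)=f(x)$, which vanishes once $|x|>R_0$. Hence $\supp u(\cdot,t)\subset\overline{B_{\max(R_0,\rho(t)+R_J)}}$. The main obstacle I anticipate is the bookkeeping in the induction: the same step $t_\ast$ must drive every stage, which is made possible precisely because Lemma~\ref{lem:subcaloric.linfty.bc} provides a time-independent $\L^\infty$ bound on $v$, and therefore on $J\ast v$.
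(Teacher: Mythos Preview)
Your proof is correct and follows essentially the same approach as the paper: part (i) is handled by direct pointwise inspection of the equation, and part (ii) by a uniform $\L^\infty$ bound on $J\ast v$ (via Lemma~\ref{lem:subcaloric.linfty.bc}) followed by iteration in time steps of fixed length, with (i) then transferring compact support from $v$ to $u$. The only cosmetic differences are that the paper resets the initial time at each step (using (i) to note $u(\cdot,t_0)=f$ outside $B_{R+R_J}$) rather than carrying the full time integral through the induction, and it uses the slightly cruder bound $\|J\ast v\|_{\L^\infty}\le\|f\|_{\L^\infty}$ instead of your $\|f\|_{\L^\infty}-1$.
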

\begin{proof}
(i) The proof is  similar (though even easier, since the supports
are understood in the classical sense) to the one for
$\L^1$-solutions.

\noindent (ii) Using Lemma~\ref{lem:subcaloric.linfty.bc} we get
    $$
    \big(J*(u-1)_+\big)(x,t)\leq \|J\|_{\L^1(\R^N)}\|(u-1)_+\|_{\L^\infty(\R^N)}\leq
    \|f\|_{\L^\infty(\R^N)}.
    $$
Therefore, from~\eqref{eq:form.int} we have
\begin{equation}
  \label{eq:bound.for.u}
u(x,t)\le f(x)+t\|f\|_{\L^\infty(\R^N)}\le\sup_{|x|\ge R} f(x)
+t\|f\|_{\L^\infty(\R^N)},\qquad |x|\ge R.
\end{equation}
Thus, for all $|x|\ge R$ and $t\le(1-\sup_{|x|\ge
R}f(x))/(2\|f\|_{\L^\infty(\R^N)})$ we have $u(x,t)<1$, and hence
$v(x,t)=0$. Then, by (i), $u(x,t)=f(x)$ for all $|x|\ge R+R_J$ and
$t=(1-\sup_{|x|\ge R}f(x))/(2\|f\|_{\L^\infty(\R^N)})$. We now
proceed by iteration.
\end{proof}

\subsection{Equivalence of formulations}
As a corollary of the control of the supports, we will prove that if
the initial data are in $\L^1_+(\mathbb{R}^N)\cap \C_0(\mathbb{R}^N)$, then the $\L^1$-solution is in fact continuous.

\begin{proposition}\label{prop:eq.cs}
 Let $f\in\L^1_+(\mathbb{R}^N)\cap\C_{0}(\mathbb{R}^N)$.  The corresponding
$\L^1$-solution is continuous.
\end{proposition}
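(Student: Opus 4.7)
The plan is to identify the $\L^1$-solution with the (automatically continuous) $\mathrm{BC}$-solution provided by Theorem~\ref{thm:stefan.noloc.BC}: since $\C_0(\mathbb{R}^N)\subset\mathrm{BC}(\mathbb{R}^N)$, there is a $\mathrm{BC}$-solution $u^{\mathrm{BC}}$, and it is enough to show that $u^{\mathrm{BC}}$ agrees almost everywhere with the $\L^1$-solution $u^{\L^1}$.

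First I would dispatch the compactly supported case $f\in\C_{\rm c}(\mathbb{R}^N)\cap \L^1_+(\mathbb{R}^N)$. By Theorem~\ref{th:fsp.bc}(ii), $u^{\mathrm{BC}}(\cdot,t)$ is compactly supported for every $t>0$. An iteration of that finite-speed-of-propagation argument, together with the $\L^\infty$ control given by Lemma~\ref{lem:subcaloric.linfty.bc}, shows that these supports remain inside a common compact set for $t$ in any bounded interval $[0,T]$. Combined with the $\L^\infty$-bound and continuity in $(x,t)$, this yields $u^{\mathrm{BC}}\in \C([0,\infty);\L^1(\mathbb{R}^N))$, and the pointwise identity defining a $\mathrm{BC}$-solution upgrades to the integral identity~\eqref{eq:form.int}. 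Hence $u^{\mathrm{BC}}$ is an $\L^1$-solution, so uniqueness (Theorem~\ref{thm:stefan.noloc.L1}) forces $u^{\mathrm{BC}}=u^{\L^1}$, and continuity is established in this case.

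For a general $f\in \C_0(\mathbb{R}^N)\cap \L^1_+(\mathbb{R}^N)$, I would approximate by $f_n:=\chi_n f$, where $\chi_n\in\C_{\rm c}^\infty(\mathbb{R}^N)$ is a standard cutoff equal to $1$ on $B_n$ and supported in $B_{2n}$, with $0\le\chi_n\le 1$. Then $f_n\to f$ uniformly (since $f\in\C_0$) and in $\L^1$ (by dominated convergence). Let $u_n$ be the solution with data $f_n$, which by the previous step is simultaneously a $\mathrm{BC}$- and an $\L^1$-solution. Lemma~\ref{lem:cont.dep.bc} gives $u_n\to u^{\mathrm{BC}}$ uniformly on $\mathbb{R}^N\times[0,T]$ for each $T>0$, while Corollary~\ref{cor:contraction.stefan.noloc} gives $u_n(t)\to u^{\L^1}(t)$ in $\L^1(\mathbb{R}^N)$ uniformly in $t\in[0,T]$. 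These two convergences force $u^{\mathrm{BC}}=u^{\L^1}$ almost everywhere, and since $u^{\mathrm{BC}}$ is continuous, the $\L^1$-solution inherits a continuous representative.

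The main obstacle I anticipate lies in the first step: verifying that the supports of $u^{\mathrm{BC}}(\cdot,t)$ stay in a \emph{common} compact set on $[0,T]$, rather than being merely compact for each individual $t$, which is what is needed to turn the $\mathrm{BC}$-solution into an honest $\L^1$-solution. This requires a careful iterative use of Theorem~\ref{th:fsp.bc}, leveraging that the time step and support growth at each iteration are controlled uniformly in $t$ through the $\L^\infty$-monotonicity of Lemma~\ref{lem:subcaloric.linfty.bc}.
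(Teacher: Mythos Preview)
Your proposal is correct and follows essentially the same two-step strategy as the paper: first handle $f\in\C_{\rm c}$ by showing the $\mathrm{BC}$-solution is an $\L^1$-solution (via Theorem~\ref{th:fsp.bc}(ii)), then approximate a general $f\in\C_0\cap\L^1_+$ simultaneously in $\L^\infty$ and $\L^1$ and invoke the two stability results (Lemma~\ref{lem:cont.dep.bc} and Corollary~\ref{cor:contraction.stefan.noloc}). Your explicit attention to the uniform-in-$t$ support bound in the first step is in fact more careful than the paper's own exposition, which simply asserts $u\in\C([0,T];\L^1(\mathbb{R}^N))$ without comment.
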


\begin{proof}
We start by considering the case where $f$ is continuous, nonnegative and
compactly supported. Since a $\mathrm{BC}$-solution with a continuous and compactly supported initial data stays
compactly supported in space for all times, it is also integrable in space for all times. Moreover, $u\in\C\big([0,T];\L^1(\mathbb{R}^N)\big)$.
Hence, it coincides with the $\L^1$-solution with the same initial data.

We now turn to the general case, that will be dealt with by
approximation: let $\{f_n\}$ be a sequence of continuous and compactly supported
functions such that
$$
 \|f_n - f\|_{\L^\infty(\R^N)} <\frac1n,\qquad \|f_n-f\|_{\L^1(\R^N)} <\frac1n.
$$
Let $u_n^1$, $u^1$ be the $\L^1$-solutions with initial data
respectively $f_n$ and $f$, and $u_n^c$, $u^c$ the corresponding
$\mathrm{BC}$-solutions. We know that $u_n^1=u_n^c$.
Now, using the $\L^1$-contraction property for
$\L^1$-solutions, we have that
$\|u_n^1-u^1\|_{\L^1(\R^N\times[0,T])}\to0$ for any
$T\in[0,\infty)$. Moreover, by Lemma~\ref{lem:cont.dep.bc}, $\|u_n^1-u^c\|_{\L^\infty(\R^N\times[0,T])}\to0$. Hence
the result.
\end{proof}

We can now use an approximation argument to prove a comparison principle for general initial data in $ \mathrm{BC}_+(\mathbb{R}^N)$. This can in turn be used to show that initial data in that class yield \emph{one-phase} solutions (satisfying $u\ge0$).

\begin{corollary}\label{cor:comparison.BC} Let $f_1,f_2\in\mathrm{BC}_+(\mathbb{R}^N)$, and $u_1$, $u_2$ the corresponding  $\mathrm{BC}$-solutions. If $f_1\leq f_2$ then $u_1\leq u_2$.
\end{corollary}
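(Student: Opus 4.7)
The plan is to reduce to the case of initial data in $\L^1_+(\R^N)\cap\C_0(\R^N)$, where comparison already follows from the $\L^1$-contraction, by a spatial truncation, and then pass to the limit pointwise using a finite-speed-of-propagation estimate forced by the compact support of $J$. Fix a cutoff $\phi_R\in\C_\mathrm{c}^\infty(\R^N)$ with $0\le\phi_R\le 1$, $\phi_R\equiv 1$ on $B_R$, and $\supp\phi_R\subset B_{R+1}$, and set $f_i^R:=\phi_R f_i$ for $i=1,2$. Then $f_i^R\in\L^1_+(\R^N)\cap\C_0(\R^N)\cap\mathrm{BC}_+(\R^N)$, the ordering $f_1^R\le f_2^R$ is preserved, and $f_i^R\to f_i$ pointwise (actually eventually equal, for any fixed $x$). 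By Proposition~\ref{prop:eq.cs}, the BC-solutions $u_i^R$ associated with $f_i^R$ coincide with their $\L^1$-solutions, so the contraction estimate of Corollary~\ref{cor:contraction.stefan.noloc} applied to the nonnegative difference gives
$$
\int_{\R^N}(u_1^R-u_2^R)_+(t)\le\int_{\R^N}(f_1^R-f_2^R)_+=0,
$$
whence $u_1^R\le u_2^R$ a.e., and everywhere by continuity of BC-solutions.

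It remains to show that, at each fixed $(x_0,t_0)$, $u_i^R(x_0,t_0)\to u_i(x_0,t_0)$ as $R\to\infty$. Note that $\|f_i-f_i^R\|_{\L^\infty(\R^N)}$ need not vanish (when $f_i$ does not decay), so Lemma~\ref{lem:cont.dep.bc} cannot be applied; the gain must come from $\supp J\subset B_{R_J}$. Setting $w_R:=u_i-u_i^R$ and using that $s\mapsto(s-1)_+$ is $1$-Lipschitz, the integral form of~\eqref{eq:stefan.nolocal} yields
$$
|w_R|(x,t)\le|w_R(x,0)|+\int_0^t\bigl(J\ast|w_R|(x,s)+|w_R|(x,s)\bigr)\ds,
$$
with $|w_R(\cdot,0)|\equiv 0$ on $B_R$ and $\|w_R\|_{\L^\infty(\R^N\times[0,T])}\le 2\|f_i\|_{\L^\infty(\R^N)}=:M$ thanks to Lemma~\ref{lem:subcaloric.linfty.bc}. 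Introducing the nested balls $A_k:=B_{R-kR_J}$ and $m_k(t):=\sup_{A_k}|w_R|(\cdot,t)$, for $x\in A_{k+1}$ the convolution only sees points of $A_k$, so $J\ast|w_R|(x,s)\le m_k(s)$, and since $m_{k+1}\le m_k$ we obtain the recursion $m_{k+1}(t)\le 2\int_0^t m_k(s)\ds$. A straightforward induction from $m_0(t)\le M$ yields $m_k(T)\le(2T)^k M/k!$. For any fixed $x_0$ and any $\varepsilon>0$, choosing $R$ so large that $x_0\in A_k$ with $(2T)^k M/k!<\varepsilon$ gives $|w_R|(x_0,T)<\varepsilon$, proving the pointwise convergence $u_i^R\to u_i$; the desired inequality $u_1\le u_2$ then follows by passing to the limit in $u_1^R\le u_2^R$.

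The main obstacle is precisely this passage to the limit: the sup-norm continuous dependence of Lemma~\ref{lem:cont.dep.bc} is useless for non-decaying data, so the argument must rely entirely on the finite-range structure of $\mathcal{L}$. It is here that the standing hypothesis $\supp J\subset B_{R_J}$ plays a central role; the same strategy would fail if $J$ were supported on the whole of $\R^N$. As an immediate consequence, taking $f_1\equiv 0$ (whose BC-solution is identically zero, since $v\equiv 0$ trivially satisfies the equation) and $f_2=f\in\mathrm{BC}_+(\R^N)$ yields $u\ge 0$, so the model is genuinely one-phase in this class.
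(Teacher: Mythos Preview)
Your argument is correct and takes a genuinely different route from the paper. The paper approximates $f_1,f_2$ \emph{uniformly} by nonnegative, continuous, compactly supported functions, applies the $\L^1$-comparison to the approximations, and then passes to the limit via the sup-norm continuous dependence of Lemma~\ref{lem:cont.dep.bc}. This is short, but the uniform-approximation step tacitly requires $f_i\in\C_0(\R^N)$ --- exactly the obstruction you identify. You instead truncate spatially, $f_i^R=\phi_R f_i$, and replace sup-norm continuous dependence by a domain-of-dependence estimate: the recursion $m_{k+1}(t)\le 2\int_0^t m_k(s)\ds$ on the nested balls $A_k=B_{R-kR_J}$ uses only the compact support of $J$ and the uniform $\L^\infty$-bound from Lemma~\ref{lem:subcaloric.linfty.bc}, giving pointwise convergence $u_i^R\to u_i$ without any decay hypothesis on the data. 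Your proof thus covers all of $\mathrm{BC}_+(\R^N)$ as stated, at the price of a somewhat longer argument; the paper's route is quicker but, as written, appears to need the extra assumption $f_i\in\C_0(\R^N)$ for the approximating sequences to exist.
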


\begin{proof} Let $\{f_{1,n}\}$, $\{f_{2,n}\}$ be sequences of nonnegative, continuous and compactly supported functions such that $f_{1,n}\to f_1$, $f_{n,2}\to f_2$ uniformly, and  $f_{1,n}\leq f_{2,n}$. Since $f_{1,n},f_{2,n}\in \L^1(\mathbb{R}^N)\cap \C_0(\mathbb{R}^N)$, the corresponding $\mathrm{BC}$-solutions $u_{1,n}$, $u_{2,n}$, are also $\L^1$-solutions. Hence, the comparison principle for $\L^1$-solutions,  Corollary~\ref{cor:contraction.stefan.noloc}, yields $u_{1,n}\leq u_{2,n}$.  Passing to the limit in $\mathrm{BC}$, and using the continuous dependence of $\mathrm{BC}$-solutions on the initial data, Lemma~\ref{lem:cont.dep.bc}, we conclude
 that $u_1\le u_2$.
\end{proof}

%%%%%%%%%%%%5
\subsection{Retention for $u$ and $v$}

We next prove that the supports of both $u$ and $v$ are
nondecreasing. We denote this property as retention.

We start by considering the case of $\mathrm{BC}$-solutions.
\begin{proposition}
    \label{prop:retention.bc}
    Let $f\in\mathrm{BC}_+(\mathbb{R}^N)$,
    and let $u$ be the $\mathrm{BC}$-solution to problem~\eqref{eq:stefan.nolocal}.
    Then $\partial_tu\ge -u$ and $\partial_tv\ge-v$ for all $t\ge0$. In
    particular, $u$ and $v$ have the retention property.
\end{proposition}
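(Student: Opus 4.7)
My plan is to derive both differential inequalities directly from the equation, using only that $u\ge 0$ on $\mathbb{R}^N\times[0,\infty)$ (which follows from Corollary~\ref{cor:comparison.BC} applied with $f_1=0$) together with the pointwise inequality $v=(u-1)_+\le u$. For the first inequality, since $J\ge 0$ and $v\ge 0$ we have $J\ast v\ge 0$, so
$$\partial_t u = J\ast v - v \ge -v \ge -u.$$
This is immediate and uses nothing beyond the regularity already built into a $\mathrm{BC}$-solution (in particular, $u_t$ is continuous because $u_t=J\ast v-v$ is a convolution of continuous functions minus a continuous function).

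For the second inequality I would argue pointwise in $(x,t)$, splitting according to the level set $\{u=1\}$. Since $u(x,\cdot)$ is $\C^1$ in $t$, in a neighborhood of any point with $u(x,t)>1$ we have $v=u-1$, hence $v_t=u_t=J\ast v-v\ge -v$; in a neighborhood of any point with $u(x,t)<1$ we have $v\equiv 0$, hence $v_t=0=-v$; and at any point with $u(x,t)=1$ the right derivative satisfies $\partial_t^+v=(u_t)_+\ge 0=-v$. Combining these three cases gives $\partial_t v\ge -v$ in the right-derivative sense, and equivalently a.e.\ (since $v$ is Lipschitz in $t$ for fixed $x$).

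From each of these pointwise inequalities I would then extract retention via the integrating factor $e^t$. Writing $\partial_t u\ge -u$ as $\partial_t(e^t u)\ge 0$ shows that $t\mapsto e^tu(x,t)$ is nondecreasing, so whenever $u(x,s)>0$ one has $u(x,t)\ge e^{s-t}u(x,s)>0$ for every $t\ge s$; since supports are understood in the classical sense for $\mathrm{BC}$-solutions, this yields $\supp u(\cdot,s)\subseteq \supp u(\cdot,t)$. The same argument applied to the (a.e./right-derivative) inequality for $v$, plus a short connectedness argument on the open set $\{v>0\}$ to rule out that $v$ could vanish at some later time and then reappear, gives retention for $v$.

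The main obstacle I anticipate is the lack of classical differentiability of $v$ on the level set $\{u=1\}$, which prevents a completely pointwise interpretation of $\partial_t v\ge -v$. The cleanest way around this is to work with the right derivative (which exists everywhere because $u_t$ is continuous and $s\mapsto (s-1)_+$ has a one-sided derivative everywhere) or with the a.e.\ derivative given by the Lipschitz regularity of $v$ in time; either interpretation is strong enough to run the integrating-factor argument and conclude retention.
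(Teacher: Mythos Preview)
Your argument is essentially the paper's: both derive $\partial_t u\ge -u$ directly from $J\ast v\ge 0$ and $v\le u$, handle $\partial_t v\ge -v$ by splitting on $\{u>1\}$, and integrate via the factor $e^t$. One remark: the ``short connectedness argument'' you mention for $v$ is unnecessary, since $v(x,\cdot)$ is Lipschitz in $t$ and therefore the a.e.\ (or right-derivative) inequality $\partial_t(e^tv)\ge 0$ already yields $v(x,t)\ge e^{-(t-s)}v(x,s)$ for all $t\ge s$ and every $x$, which is retention.
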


\begin{proof}
    We have
    $$
    \partial_tu=J* (u-1)_+-(u-1)_+\geq -u,
    $$
    which, after integration, yields
    \begin{equation}
    \label{eq:integrated.retention}
    u(x, t)\geq u(x,s)\e^{-(t-s)},\qquad t\geq s.
    \end{equation}
    This implies retention for $u$.

    Concerning $v=(u-1)_+$, we have
    $$
    \partial_t(u-1)_+ = \partial_t u\cdot \mathds{1}_{\{u>1\}} \geq
    -(u-1)_+,
    $$
    that is, $\partial_tv\geq -v$, from where retention follows.
\end{proof}

For $\L^1$-solutions we also have retention for both $u$ and $v$. In
this case the supports have to be understood in the distributional
sense.

\begin{proposition}
    \label{prop:retention.l1}
    Let $f\in\L^1_+(\mathbb{R}^N)$, and let $u$ be the $\L^1$-solution to problem~\eqref{eq:stefan.nolocal}.
    Then $u$ and the corresponding $v$ have the retention property.
\end{proposition}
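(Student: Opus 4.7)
The plan is to mimic the proof of Proposition~\ref{prop:retention.bc} but to work in the distributional-support formulation, since $\L^1$-solutions have no pointwise values. The key point is that the almost-everywhere inequality $\partial_t u\ge -u$, which in the $\mathrm{BC}$ setting integrated pointwise to $u(x,t)\ge u(x,s)\e^{-(t-s)}$, still yields, after testing against a nonnegative $\varphi\in\C_{\rm c}^\infty(\R^N)$, an ODE inequality for $F(t):=\int_{\R^N} u(t)\varphi$ that propagates distributional nullity of $u$ backward in time; the same idea handles $v$.

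\noindent\textsc{Retention for $u$.} By the remark following the definition of an $\L^1$-solution, $u\in\C^1([0,\infty);\L^1(\R^N))$, so $F\in\C^1([0,\infty))$ with $F'(t)=\int_{\R^N}(J\ast v(t)-v(t))\varphi$. Because $u\ge0$ a.e.\ (remark following Corollary~\ref{cor:contraction.stefan.noloc}), we have $0\le v=(u-1)_+\le u$; combined with $J\ge0$ this gives $J\ast v\ge0$, and
$$
F'(t)\ge -\int_{\R^N} v(t)\varphi \ge -\int_{\R^N} u(t)\varphi = -F(t),
$$
so Gronwall yields $F(t)\ge F(s)\e^{-(t-s)}$ for $0\le s\le t$. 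If $x_0\notin\Dsupp(u(t))$ there is an open neighborhood $\mathcal{O}$ of $x_0$ with $\int_{\R^N} u(t)\varphi=0$ for every nonnegative $\varphi\in\C_{\rm c}^\infty(\mathcal{O})$; the bound above then forces $0\le F(s)\le \e^{t-s}F(t)=0$ for all such $\varphi$ and every $s\in[0,t]$, hence $x_0\notin\Dsupp(u(s))$.

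\noindent\textsc{Retention for $v$.} As in the proof of Lemma~\ref{lem:subcaloric}, $\partial_t v=\partial_t u\cdot\mathds{1}_{\{u>1\}}$ a.e.; on $\{u>1\}$ this equals $J\ast v-v\ge -v$, and on $\{u\le 1\}$ it vanishes while $v\equiv 0$. Hence $\partial_t v\ge -v$ a.e. Since $s\mapsto (s-1)_+$ is Lipschitz and $u\in\C^1([0,\infty);\L^1(\R^N))$, the map $t\mapsto\int_{\R^N} v(t)\varphi$ is absolutely continuous with derivative $\ge -\int_{\R^N} v(t)\varphi$ for a.e.\ $t$. Consequently $t\mapsto\e^{t}\int_{\R^N} v(t)\varphi$ is nondecreasing, and the same argument as for $u$ transfers zeros of $\int_{\R^N} v(t)\varphi$ backward in time.

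The main obstacle, compared with the $\mathrm{BC}$ case, is that retention has to be phrased against nonnegative test functions rather than pointwise; a mild secondary issue is the absolute continuity in $t$ of $\int_{\R^N} v(t)\varphi$, which follows from the $\C^1([0,\infty);\L^1(\R^N))$ regularity of $u$ and the Lipschitz character of the positive-part composition.
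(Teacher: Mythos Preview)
Your argument is correct, but it proceeds differently from the paper's. The paper does not work directly with the $\L^1$-solution: it approximates $f$ in $\L^1$ by a sequence $\{f_n\}\subset\C^\infty_{\rm c}(\R^N)$, observes that the corresponding solutions $u_n$ are simultaneously $\L^1$- and $\mathrm{BC}$-solutions, applies the pointwise bound \eqref{eq:integrated.retention} from Proposition~\ref{prop:retention.bc} to $u_n$ and $v_n$, multiplies by a nonnegative test function $\varphi$, integrates, and then passes to the limit in $n$ using the $\L^1$-contraction property (and the Lipschitz character of $s\mapsto(s-1)_+$ for $v$). This yields exactly your inequality $\int u(t)\varphi\ge\e^{-(t-s)}\int u(s)\varphi$, from which retention follows as you argue.

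Your route is more self-contained: you derive the Gronwall inequality for $F(t)=\int u(t)\varphi$ straight from the equation and the sign information $0\le v\le u$, without invoking the $\mathrm{BC}$ theory or the approximation step. The price is the mild technical point you flag for $v$: one has to know that the a.e.\ inequality $\partial_t v\ge -v$ (established exactly as in Lemma~\ref{lem:subcaloric}) transfers to a differential inequality for $G(t)=\int v(t)\varphi$. Since $G$ is locally Lipschitz (because $|G(t)-G(s)|\le\|\varphi\|_\infty\|u(t)-u(s)\|_{\L^1}$) and $\partial_t v\ge -v$ holds in the sense of distributions on $\R^N\times(0,\infty)$, testing against $\varphi(x)\psi(t)$ gives $G'\ge -G$ in $\mathcal{D}'(0,\infty)$, hence $\e^{t}G(t)$ is nondecreasing; this closes the argument cleanly. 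The paper's approximation approach sidesteps this point entirely by doing the chain rule in the $\mathrm{BC}$ setting, where everything is classical, and only then integrating and passing to the limit.
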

\begin{proof}
Let $\{f_n\}$ be a sequence of functions in $\mathrm{C}^\infty_{\rm
c}(\R^N)$ such that $\|f_n-f\|_{\L^1(\R^N)}\to0$. Let $u_n$ be the
$\L^1$-solution (which coincides with the $\mathrm{BC}$-solution)
with initial data $f_n$. Thanks to the $\L^1$-contraction property,
 $\|u_n(t)-u(t)\|_{\L^1(\R^N)}\to0$ for all $t>0$. Moreover, since the temperature $v$ is a Lipschitz function of $u$, we also have
 $\|v_n(t)-v(t)\|_{\L^1(\R^N)}\to0$.

Let $\varphi$ be any nonnegative function, $\varphi\in\mathrm{C}^\infty_{\rm
c}(\R^N)$. Multiplying \eqref{eq:integrated.retention} (with $u_n$
instead of $u$) by $\varphi$, integrating and letting $n\to\infty$,
we get
    $$
    \int_{\mathbb{R}^N}  u(t)\varphi \geq \e^{-(t-s)}\int_{\mathbb{R}^N}  u(s)\varphi,\qquad t\ge
    s,
    $$
from where retention for $u$ in the distributional sense is
immediate. The argument for $v$ is identical.
\end{proof}

%%%%%%%%%%%%%%%%%%%%%%%%%%%%%%%%%%%%%%%%%%%%%%%%
\subsection{Localization of the supports}

Our next aim is to prove, in the case of a nonincreasing kernel $J$,
that for a wide class of initial data the supports of both $u$ and $v$ are \emph{localized}: they are
contained in a ball of fixed radius for all times.

The result will follow from comparison with solutions with initial
data in $\C_0(\R^N)$ that are radial and strictly
decreasing in the radial variable. Such solutions are continuous and
radial (the latter fact comes from the uniqueness of the solutions
and invariancy under rotations of the equation). Moreover, for any time $t\ge0$, the water zone $\{v(t)>0\}$ is compactly supported (see Theorem~\ref{th:fsp.bc}).
The main technical difficulty stems from the fact that we are not able to prove that these
solutions are decreasing in the radial variable for all times.

\begin{lemma}\label{lem:radial.data}
Let $J$ be nonincreasing in the radial variable and
$f\in\C_0(\R^N)$  nonnegative, radial, and strictly decreasing in the
radial variable. Then the support of $v(t)$ is a ball of radius $r(t)$
for every $t\ge0$ and the function $r$ is continuous on $[0,\infty)$.
\end{lemma}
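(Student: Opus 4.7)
My plan has three ingredients: (a) setup and radial structure, (b) a key monotonicity property of $J\ast v$ coming from the radial monotonicity of $J$, and (c) a continuity--induction argument establishing both the ``ball'' structure of the support and the continuity of $r$. For (a): by Theorem~\ref{thm:stefan.noloc.BC} there is a unique continuous $\mathrm{BC}$-solution $u$; uniqueness together with the rotational invariance of the equation forces $u(\cdot,t)$ to be radial, so I write $u(x,t)=U(|x|,t)$. Since $f\in\C_0(\R^N)$, Theorem~\ref{th:fsp.bc}(ii) gives $v(\cdot,t)$ compactly supported. I set $r(t):=\sup\{|x|:v(x,t)>0\}$; retention (Proposition~\ref{prop:retention.bc}) makes $r$ nondecreasing and forces $\supp(v(t))\subset\overline{B_{r(t)}}$.

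For (b): the key estimate is that if $\supp(v(\cdot,s))\subset\overline{B_{\rho_0}}$, then the radial function $(J\ast v)(\cdot,s)$ is nonincreasing on $\{|x|\geq\rho_0\}$. For $x_i=\rho_i e$ on a common ray with $\rho_0\leq\rho_1\leq\rho_2$ and $|y|\leq\rho_0$,
\[
|x_2-y|^2-|x_1-y|^2=(\rho_2-\rho_1)(\rho_1+\rho_2-2\,y\cdot e)\geq 0
\]
since $y\cdot e\leq|y|\leq\rho_0\leq(\rho_1+\rho_2)/2$; hence $J(x_1-y)\geq J(x_2-y)$ by the radial monotonicity of $J$, and the claim follows by integrating against $v(y,s)\geq0$.

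For (c): set $G:=\{t\geq0:\supp(v(s))=\overline{B_{r(s)}}\ \forall s\in[0,t]\}$; then $0\in G$ because $f$ is strictly radially decreasing. $G$ is closed under limits from below: if $t_n\uparrow t^*$ with $t_n\in G$ and $R=\sup_n r(t_n)$, retention gives $\overline{B_R}\subset\supp(v(t^*))$, while continuity of $v$ in $t$ together with $v(\rho e,t_n)=0$ for $\rho>R$ forces $v(\rho e,t^*)=0$. For openness to the right, fix $t\in G$: the key estimate at each $s\in[0,t]$ (using $\overline{B_{r(s)}}\subset\overline{B_{r(t)}}$) implies $\int_0^t(J\ast v)(\rho,s)\ds$ is nonincreasing in $\rho$ on $[r(t),\infty)$; combined with the strict decrease of $f$, we get $U(\rho,t)=f(\rho)+\int_0^t(J\ast v)(\rho,s)\ds$ \emph{strictly} decreasing for $\rho\geq r(t)$, hence $U(\rho,t)<1$ strictly for $\rho>r(t)$. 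A short-time contradiction argument based on the key estimate then shows that if the first-arrival map $\tau(\rho):=\inf\{s>t:U(\rho,s)>1\}$ were not monotone near $t$, integrating the equation between $t$ and $\tau(\rho_2)$ for a bad pair $r(t)<\rho_1<\rho_2$ with $\tau(\rho_1)>\tau(\rho_2)$ would contradict the strict decrease of $U(\cdot,t)$; hence water reaches larger radii after $t$ in order, so $\{U(\cdot,t')>1\}$ remains an interval $[0,r(t'))$ for $t'$ slightly greater than $t$, giving $G=[0,\infty)$.

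Continuity of $r$ then follows: if $r(t^-)<r(t)$, any $\rho\in(r(t^-),r(t))$ satisfies $v(\rho e,s)=0$ for $s<t$ yet $v(\rho e,t)>0$, contradicting continuity of $v$; if $r(t^+)>r(t)$, any $\rho\in(r(t),r(t^+))$ has $U(\rho,t)<1$ strictly (by (c)), so by continuity $u<1$ in a neighborhood of $(\rho e,t)$, contradicting $r(s)\to r(t^+)>\rho$. The hard part will be the openness-to-the-right step: the strict decrease of $U(\cdot,t)$ outside the water zone must be propagated through a short time interval while the support itself grows, which requires iterating the key estimate and jointly controlling the evolution of $r(s)$ on $[t,t+\epsilon]$ with the pointwise comparison of the convolutions.
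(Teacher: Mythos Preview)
Your key estimate (b) is exactly the right tool, and it is the same computation the paper uses.  The closed-from-below step and the final derivation of continuity of $r$ from $G=[0,\infty)$ are also fine.  The difficulty is precisely where you flag it: the openness-to-the-right step.

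The first-arrival argument as you sketch it is circular.  To compare $(J\ast v)(\rho_1 e,s)$ and $(J\ast v)(\rho_2 e,s)$ via (b) you need $\supp(v(\cdot,s))\subset\overline{B_{\rho_1}}$ for $s\in(t,\tau(\rho_2)]$.  But during that interval the support may already have crossed radius $\rho_1$ at some other ray-times (that is exactly what \emph{not} being a ball would mean), so (b) is unavailable.  Concretely, for $r(t)<\rho_1<\rho_2$ with $\tau(\rho_1)>\tau(\rho_2)$ one gets
\[
U(\rho_1,\tau(\rho_2))-U(\rho_2,\tau(\rho_2))
=\bigl(U(\rho_1,t)-U(\rho_2,t)\bigr)
+\int_t^{\tau(\rho_2)}\!\bigl((J\ast v)(\rho_1 e,s)-(J\ast v)(\rho_2 e,s)\bigr)\ds,
\]
and the positivity of the first term does not help unless the integrand is nonnegative, which you cannot assert without already knowing $r(s)\le\rho_1$ on $(t,\tau(\rho_2)]$.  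A short-time bound of the type $\supp(v(s))\subset\overline{B_{r(t)+\delta}}$ (obtained from $U(\rho,t)\le 1-\eta$ for $\rho\ge r(t)+\delta$ and $|U_t|\le 2\|f\|_\infty$) only lets you apply (b) on $[r(t)+\delta,\infty)$, leaving the annulus $(r(t),r(t)+\delta)$ uncontrolled; and since $\eta\to0$ as $\delta\to0$, this does not iterate.

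The paper avoids this circularity by decoupling the two conclusions.  It first proves continuity of $r$ \emph{without} assuming the support is a ball: if $r(t_0^-)<r(t_0^+)$, pick $r(t_0^-)<a<b=r(t_0^+)$; since for every $t\le t_0$ one has $\supp(v(\cdot,t))\subset\overline{B_{r(t_0^-)}}$ (this needs only the definition of $r$ and left-continuity, not connectedness), estimate (b) applies at all those times and gives $u(x_a,t_0)>u(x_b,t_0)$.  Since $U(r(t),t)=1$ for every $t$ (again only from the definition of $r$ and radiality), letting $t\downarrow t_0$ yields $u(x_b,t_0)=1$, a contradiction.  Only \emph{after} continuity is in hand does the paper prove connectedness, by showing that a hole in the support would force a jump of $r$.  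The point is that the paper only ever invokes (b) at times where the inclusion $\supp(v)\subset\overline{B_{\rho_0}}$ is already guaranteed by the past, not by the conclusion being sought.

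So either rework your argument so that (b) is applied only at times $\le t$ (which will push you toward the paper's ``continuity first, connectedness second'' structure), or supply a genuinely new mechanism to control the support on $(t,\tau(\rho_2)]$; the sketch you give does not do this.
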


\begin{proof}
    For $t>0$, let $r(t):=\inf\{r>0:\supp(v(\cdot,t))\subset
    B_{r(t)}\}$. Thanks to Theorem~\ref{th:fsp.bc} this quantity is well
    defined. By the retention property for $v$
   (Proposition~\ref{prop:retention.l1}), the function $r$ is nondecreasing.
    Notice that a priori $\supp(v(t))$  could be strictly contained in $B_{r(t)}$,
    though we will prove that this is not the case.

   \noindent{\sc Continuity of $r$. }   Assume for contradiction that
 $r(t_0^-)<r(t_0^+)$  at some time $t_0>0$. For any
    $x$ such that $|x|>r(t_0^-)$ we have $v(x,t)=0$ for all
    $t\le t_0$. If moreover $|x|=r(t_0^+)$, the continuity of $u$ yields $u(x,t_0)=1$.

    Let
    $ x_a=(a,0,\dots,0)$, $x_b=(b,0,\dots,0)$, with
    $r(t_0^-)<a<b=r(t_0^+)$.
    We consider $w(t):=u(x_a,t)-u(x_b,t)$.
    For any
    $t\le t_0$, we have
    $$
    \begin{aligned}
        w'(t) & = \left((J\ast v)(x_a,t)- v(x_a,t)\right)-\left((J\ast v)(x_b,t) - v(x_b,t)\right)\\
            & = (J\ast v)(x_a,t)-(J\ast v)(x_b,t)\\
            & = \int_{|y|<r(t_0^-)} v(y,t)\big(J(x_a-y)-J(x_b-y)\big)\dy.
    \end{aligned}
    $$
    Since $|x_a-y|<|x_b-y|$, for all $|y|\le r(t_0^-) $, then $J(x_a-y)\ge J(x_b-y)$ in this region, because $J$ is radially noincreasing.
    Thus we obtain $w'\geq 0$. Using that
    $w(0)=f(x_a)-f(x_b)>0$,
    we obtain $w(t_0)>0$, so that $u(x_a,t_0)>u(x_b,t_0)=1$. This is a contradiction,
    since  $v(x_a,t_0)=0$.

Continuity at $t=0$ is easier. On one hand, from the retention property we have $r(0^+)\geq r(0)$. On the other hand,  since $f$ is strictly decreasing, we have
$f(x)<1$ if and only if $|x|>r(0)$. But, by the continuity of $u$, $f(x)=1$ if $|x|=r(0^+)$. Therefore, $r(0^+)$ cannot be strictly greater that $r(0)$. We end up with $r(0)=r(0^+)$.

    \noindent{\sc Conectedness. } We now prove that $\supp(v(\cdot,t))$
    is connected for all positive times. Assume, on the contrary, that
    there exists some $t_*>0$ such that $\supp(v(\cdot,t_*))$ is
    disconnected. Hence, there are values $r(0)<a<b<r(t_*)$ such that
    $v(x,t_*)=0$ if $a\leq|x|\leq b$ and $v(x,t_*)>0$ if $b<|x|<b+\delta$ for some $\delta>0$.
    The retention property for $v$ implies that
    $v(x,t)=0$ for $a\le|x|\le b$, $0\leq t<t_*$.

    Let $t_d\in(0,t_*)$ be the time when the disconnected region outside the ball $B_b$ appears,
    $$
    t_d:=\sup\{t>0:v(x,t)=0\text{ for }|x|\ge b\}=\sup\{t>0:v(x,t)=0\text{ for }|x|\ge
    a\}.
    $$
    Obviously, $r(t_d)\le a$ and, on the other hand,
    $r(t_d^+)\geq b>a$, a contradiction with the continuity of $r$.
\end{proof}

\begin{lemma}
\label{lem:cont.supp.v}Let $J$ be nonincreasing in the radial
variable. If $0\le f\le g$  a.e.~for some  $g\in\L^1_+(\R^N)\cap\C_0(\R^N)$
radial and strictly decreasing in the radial variable, then there
exists some $R$,  depending only on $g$, such that
$\supp(v(t))\subset B_R$ for all $t\ge0$.
\end{lemma}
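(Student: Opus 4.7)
The plan is to use the solution $U$ arising from the comparison datum $g$ itself as a barrier, and to bound the radius of its water zone via mass conservation. Since $g\in\L^1_+(\R^N)\cap\C_0(\R^N)$, Proposition~\ref{prop:eq.cs} guarantees that the $\L^1$-solution $U$ with initial datum $g$ is continuous, hence coincides with the $\mathrm{BC}$-solution. As $g$ is radial and strictly decreasing, Lemma~\ref{lem:radial.data} applies and yields $\supp V(\cdot,t)=\overline{B_{r(t)}}$ for a continuous nondecreasing radius function $r\colon[0,\infty)\to[0,\infty)$, where $V=(U-1)_+$.

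The core step is to bound $r$ uniformly on $[0,\infty)$ by a constant depending only on $\|g\|_{\L^1(\R^N)}$. Since $U$ is continuous and $\overline{B_{r(t)}}$ equals the closure of $\{V(\cdot,t)>0\}$, every point of $\overline{B_{r(t)}}$ is a limit of points where $U>1$; continuity then gives $U(\cdot,t)\ge 1$ on the closed ball $\overline{B_{r(t)}}$. Combined with $U\ge 0$ (the one-phase property, inherited from $g\ge 0$ via Corollary~\ref{cor:contraction.stefan.noloc}) and conservation of mass (Theorem~\ref{corol:mass.conservation}), this gives
$$
\|g\|_{\L^1(\R^N)}=\int_{\R^N}U(t)\;\ge\;\int_{B_{r(t)}}U(t)\;\ge\;|B_1|\,r(t)^N,
$$
so that $r(t)\le R:=\bigl(\|g\|_{\L^1(\R^N)}/|B_1|\bigr)^{1/N}$ for every $t\ge 0$.

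To conclude, I invoke the comparison principle of Corollary~\ref{cor:contraction.stefan.noloc}: since $0\le f\le g$ a.e., $\int_{\R^N}(u(t)-U(t))_+\le\int_{\R^N}(f-g)_+=0$, so $u(t)\le U(t)$ a.e.\ for every $t\ge 0$. Consequently $v(t)=(u(t)-1)_+\le(U(t)-1)_+=V(t)$ a.e., and since $V(\cdot,t)$ vanishes outside $\overline{B_R}$ by the previous step, so does $v(\cdot,t)$, giving $\Dsupp(v(t))\subset\overline{B_R}$ (replace $R$ by $R+1$ if one prefers an open ball). The only delicate point is the pointwise inequality $U\ge 1$ on the \emph{closed} ball $\overline{B_{r(t)}}$ rather than just on its interior; this is precisely where the continuity of $U$ afforded by Proposition~\ref{prop:eq.cs} and the identification of $\supp V$ with the closure of the positivity set are essential. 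Everything else amounts to bookkeeping with already-established contraction and energy conservation.
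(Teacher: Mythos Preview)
Your proof is correct and follows essentially the same route as the paper: compare $u$ with the solution $U$ of data $g$, invoke Lemma~\ref{lem:radial.data} to obtain that $\supp V(\cdot,t)$ is a ball $B_{r(t)}$, and then bound $r(t)$ uniformly via conservation of mass. The only cosmetic difference is in the mass-bound step: the paper writes $\int_{\R^N}g=\int_{\R^N}U(t)\ge|\{U(t)>1\}|=|\supp V(t)|=\omega_N r(t)^N$ directly, whereas you take the extra care of arguing by continuity that $U\ge1$ on the closed ball before integrating; both yield the same constant $R=(\|g\|_{\L^1}/\omega_N)^{1/N}$.
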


\begin{proof} Let $u_g$ be the $\L^1$-solution with initial datum $g$ and $v_g=(u_g-1)_+$.
    By comparison,  $\supp(v(t))\subset\supp(v_g(t))$.
    Lemma \ref{lem:radial.data} implies that  $\supp(v_g(t))=B_{r_g(t)}$. The radius $r_g(t)$ can be
    estimated using the conservation of mass,
    $$
    \int_{\R^N}g= \int_{\R^N}u_g(t)\geq\int_{\{u_g(t)>1\}}1
    =\big| \supp(v_g(t))\big|=\omega_Nr_g(t)^N,
    $$
    where $\omega_N$ is the volume of the unit sphere in $\R^N$. This implies that
    $$
    \supp({v}(t))\subset\supp(v_g(t))\subset B_R, \qquad
    R=\left(\int_{\R^N}g/\omega_N\right)^{1/N}.
    $$
\end{proof}

As a corollary  of the localization of the support of the
temperature, we obtain that $\|v(t)\|_{\L^1(\R^N)}$ tends to zero as $t\to\infty$ with an
exponential rate.

\begin{corollary}
\label{corollary:exponential.decay}Let $J$ and $f$ satisfy the hypotheses of Lemma~{\rm\ref{lem:cont.supp.v}}. Then there are constants $C,k>0$ such that $
\|v(t)\|_{\L^1(\R^N)}\le C \e^{-kt}$ for all $t\ge0$.
\end{corollary}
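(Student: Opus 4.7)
The plan is to use a Lyapunov functional built from the principal eigenfunction of a ``Dirichlet'' nonlocal operator on a ball slightly larger than $B_R$, combining the subcaloric property of $v$ (Lemma~\ref{lem:subcaloric}) with the support localization from Lemma~\ref{lem:cont.supp.v}.

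Fix any $R' > R$. Applying classical spectral theory to the compact, self-adjoint, nonnegative integral operator $T\phi := J*\phi$ on $\L^2(B_{R'})$ (with $\phi$ extended by zero outside $B_{R'}$), together with a Krein--Rutman argument that uses $J(0)>0$ (which follows from $J$ being radially nonincreasing with $\int J = 1$), one obtains $\mu_1 \in (0,1)$ and a continuous function $\phi_1$ with $\phi_1 > 0$ on $B_{R'}$ and $\phi_1 \equiv 0$ off $B_{R'}$, satisfying $J*\phi_1 = \mu_1 \phi_1$ pointwise on $B_{R'}$. Setting $\lambda_1 := 1 - \mu_1 > 0$, this reads $J*\phi_1 - \phi_1 = -\lambda_1 \phi_1$ on $B_{R'}$. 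Since $\overline{B_R} \subset B_{R'}$ is compact, $c_- := \min_{\overline{B_R}} \phi_1 > 0$ and $c_+ := \|\phi_1\|_{\L^\infty(\R^N)} < \infty$.

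Now set $\Phi(t) := \int_{\R^N} v(t)\,\phi_1$. Using the subcaloric property and Fubini's theorem (recall $J$ is radial, hence even),
\[
\Phi'(t) \le \int_{\R^N}(J*v(t) - v(t))\phi_1 = \int_{\R^N} v(t)\,(J*\phi_1 - \phi_1).
\]
Since $\supp v(t) \subset B_R \subset B_{R'}$ by Lemma~\ref{lem:cont.supp.v}, the eigenvalue identity applies at every point of $\supp v(t)$, yielding $\Phi'(t) \le -\lambda_1 \Phi(t)$. Gr\"{o}nwall's inequality gives $\Phi(t) \le e^{-\lambda_1 t}\Phi(0) \le c_+ e^{-\lambda_1 t}\|(f-1)_+\|_{\L^1(\R^N)}$. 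Since $\phi_1 \ge c_-$ on $\supp v(t) \subset \overline{B_R}$, one has $c_-\|v(t)\|_{\L^1(\R^N)} \le \Phi(t)$, hence
\[
\|v(t)\|_{\L^1(\R^N)} \le (c_+/c_-)\,\|(f-1)_+\|_{\L^1(\R^N)}\,e^{-\lambda_1 t},
\]
which is the desired estimate with $k = \lambda_1$.

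The main obstacle is establishing the principal eigenpair $(\lambda_1, \phi_1)$ for the nonlocal Dirichlet-type problem on $B_{R'}$. While this is standard in the theory of nonlocal convolution operators on bounded domains (compactness of $T$ on $\L^2(B_{R'})$ being Hilbert--Schmidt, strict inequality $\mu_1 < 1$ coming from mass leakage of $J*\phi_1$ outside $B_{R'}$, and a Perron--Frobenius argument for positivity and simplicity), no such spectral machinery has been introduced earlier in the paper, so it must be invoked as a black box or briefly justified. The rest of the argument is a routine test-function computation.
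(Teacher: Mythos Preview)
Your argument is correct, but it follows a different path from the paper's. The paper does not build a principal eigenfunction; instead it compares $v$ with the solution $V$ of the \emph{linear} nonlocal heat equation on $B_R$ with zero exterior data and initial value $(f-1)_+$, invokes the known exponential $\L^2$-decay of $V$ on bounded domains (cited from \cite{ChasseigneChavesRossi2006}), and finishes with Cauchy--Schwarz: $\int_{\R^N} v(t)\le |B_R|^{1/2}\|V(t)\|_{\L^2(B_R)}\le C\e^{-kt}$. So the paper's proof is a comparison argument plus a black-box $\L^2$ decay, whereas yours tests the subcaloric inequality directly against the Dirichlet eigenfunction $\phi_1$ and runs Gr\"onwall on $\Phi(t)=\int v(t)\phi_1$.

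Both routes ultimately rest on the same spectral gap for $J\ast\cdot-\mathrm{Id}$ on a bounded domain, but package it differently. The paper's version is shorter because it cites the decay of $V$ outright and needs only that $v$ is subcaloric and supported in $B_R$; it also uses that $f$ is bounded (so $(f-1)_+\in\L^2(B_R)$), which is available under the hypotheses of Lemma~\ref{lem:cont.supp.v}. Your version avoids the intermediate comparison with $V$ and produces the explicit rate $k=\lambda_1$, at the cost of importing the Krein--Rutman/Perron--Frobenius machinery for the eigenpair $(\lambda_1,\phi_1)$, which indeed has not been set up in the paper. Your choice of a strictly larger ball $B_{R'}\supset\overline{B_R}$ to guarantee $c_-=\min_{\overline{B_R}}\phi_1>0$ is the right technical move, and the differentiation of $\Phi$ is justified since $t\mapsto v(t)$ is Lipschitz in $\L^1$ (as $u\in\C^1([0,\infty);\L^1)$ and $s\mapsto(s-1)_+$ is $1$-Lipschitz).
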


\begin{proof} Let $R$ be such that $\supp(v(t))\subset B_R$ for all
times, and $V$ the $\L^1$-solution to the nonlocal heat equation in $B_R$,
$$
\left\{
\begin{array}{ll}
V_t(x,t)=J* V(x,t)-V(x,t), & x\in B_R\ t>0,
\\
V(x,t)=0, & x\notin B_R,\ t>0,
\\
 V(x,0)=(f-1)_+(x), &
x\in B_R.
\end{array}
\right.
$$
Since $f$ is bounded, $(f-1)_+\in\L^2(B_R)$. Hence
\cite[Theorem 2]{ChasseigneChavesRossi2006},  $\|V(t)\|_{\L^2(B_R)}$  decays
exponentially in time.

As $v$ is subcaloric, Lemma~\ref{lem:subcaloric},  $v\le V$ in
$B_R\times(0,\infty)$. This implies
$$
\int_{\R^N}v(t)\leq (\omega_NR^N)^{1/2}\|V(t)\|_{\L^2(B_R)}\leq
C\e^{-kt}.
$$
\end{proof}

For general integrable data we are only able to obtain a power-like
decay rate.
\begin{corollary}
\label{cor:decay.rate.integrable} Let $f\in\L^1(\R^N)$. Then
$\|v(t)\|_{\L^1(\R^N)}=O(t^{-N/2})$.
\end{corollary}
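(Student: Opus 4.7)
The plan is to combine an $\L^\infty$-decay estimate of order $t^{-N/2}$ for $v$ with a uniform bound on the measure of the water zone $\{v(\cdot,t)>0\}$, and then conclude via the elementary inequality $\|v(t)\|_{\L^1}\le \|v(t)\|_{\L^\infty}\cdot|\{v(\cdot,t)>0\}|$.

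For the $\L^\infty$-decay, I would compare $v$ with the solution $W$ of the linear nonlocal heat equation $W_t=J\ast W-W$ with initial data $(f-1)_+\in\L^1(\R^N)$. Since $v$ is subcaloric by Lemma~\ref{lem:subcaloric}, the difference $z:=v-W$ satisfies $z_t\le J\ast z-z$ with $z(\cdot,0)\equiv 0$; multiplying by $\mathds{1}_{\{z>0\}}$ and integrating in space (just as in the proof of Corollary~\ref{cor:contraction.stefan.noloc}) yields $v\le W$ a.e. The refined asymptotic result of Theorem~\ref{app:thm:refined} in the Appendix then gives $\|W(t)\|_{\L^\infty(\R^N)}\le C t^{-N/2}$ with $C$ depending only on $\|(f-1)_+\|_{\L^1(\R^N)}$, so that $\|v(t)\|_{\L^\infty(\R^N)}\le Ct^{-N/2}$.

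For the water zone, the set $\{v(\cdot,t)>0\}$ coincides with $\{u(\cdot,t)>1\}$, hence by Markov's inequality and the $\L^1$-contraction (Corollary~\ref{cor:contraction.stefan.noloc} applied with $f_2\equiv 0$, which gives $\|u(t)\|_{\L^1}\le\|f\|_{\L^1}$),
\[
|\{v(\cdot,t)>0\}|\le \|u(t)\|_{\L^1(\R^N)}\le \|f\|_{\L^1(\R^N)}.
\]
Combining the two ingredients produces $\|v(t)\|_{\L^1(\R^N)}\le C\|f\|_{\L^1(\R^N)}\,t^{-N/2}$, which is the claim.

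The main obstacle I foresee is that the refined asymptotic in the Appendix is naturally formulated for data in $\L^1\cap\L^\infty$, whereas here $(f-1)_+$ is only integrable. I would circumvent this by truncation: set $f_n:=\min(f_+,n)-f_-\in\L^1\cap\L^\infty$, which satisfies $\|f_n\|_{\L^1}\le\|f\|_{\L^1}$ and $(f_n-1)_+\le(f-1)_+$ pointwise. The argument above, applied to $f_n$, yields $\|v_n(t)\|_{\L^1}\le C\|f\|_{\L^1}\,t^{-N/2}$ with $n$-uniform constants. Since $f_n\to f$ in $\L^1$, the $\L^1$-contraction gives $u_n(t)\to u(t)$ in $\L^1$, and hence $v_n(t)\to v(t)$ in $\L^1$ by Lipschitz continuity of $s\mapsto (s-1)_+$; passing to the limit concludes the proof.
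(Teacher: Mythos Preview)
Your overall strategy---compare $v$ with the solution $W$ of the nonlocal heat equation with data $(f-1)_+$, and combine an $\L^\infty$ bound on $W$ with the Markov-type bound $|\{v(t)>0\}|\le\|f\|_{\L^1}$---is exactly the paper's. The gap is in the $\L^\infty$ step.

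Theorem~\ref{app:thm:refined} does \emph{not} assert $\|W(t)\|_{\L^\infty}\le Ct^{-N/2}$; it controls $W(t)-\e^{-t}(f-1)_+-h(t)$ in $\L^\infty$. Since $h$ is the local heat flow of an $\L^1$ datum, $\|h(t)\|_{\L^\infty}\le C\|(f-1)_+\|_{\L^1}t^{-N/2}$, so what one actually obtains is
\[
\|W(t)-\e^{-t}(f-1)_+\|_{\L^\infty}\le C\,\|(f-1)_+\|_{\L^1}\,t^{-N/2}.
\]
If $(f-1)_+\notin\L^\infty$, then $W(t)\notin\L^\infty$ either, and your inequality $\|v(t)\|_{\L^1}\le\|v(t)\|_{\L^\infty}\,|\{v(t)>0\}|$ is vacuous. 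Your truncation does not repair this: for $f_n$ one gets $\|W_n(t)\|_{\L^\infty}\le \e^{-t}(n-1)+C\,t^{-N/2}$, and the first term blows up with $n$ at every fixed $t$, so the constants are not $n$-uniform and you cannot pass to the limit.

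The paper avoids the issue by never invoking $\|W(t)\|_{\L^\infty}$. It uses the representation $W(t)=\e^{-t}(f-1)_++\omega(t)\ast(f-1)_+$ and integrates over $\{v(t)>0\}$ term by term: the singular part is estimated in $\L^1$,
\[
\int_{\{v(t)>0\}}\e^{-t}(f-1)_+\le \e^{-t}\|(f-1)_+\|_{\L^1},
\]
while only the smooth part $\omega(t)\ast(f-1)_+$ is estimated in $\L^\infty$, using $\|\omega(t)\|_{\L^\infty}\le Ct^{-N/2}$ together with the bound on $|\{v(t)>0\}|$. This works directly for $f\in\L^1$ with no truncation needed. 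Your argument becomes correct once you make this same split.
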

\begin{proof}
Since $v$ is subcaloric and nonnegative, it is enough to compare it from above with the solution, $V$, to the
non-local heat equation \eqref{app:eq:nonloc.heat} with the same initial data.
Thus, using the representation formula \eqref{eq:representation.formula.nlhe} for solutions to \eqref{app:eq:nonloc.heat} (see the appendix), we get
$$
\int_{\R^N} v(t)=\int_{\{v(t)>0\}} V(t)\le e^{-t}\|(f-1)_+\|_{\L^1(\mathbb{R}^N)}+
\int_{\{v(t)>0\}} \omega(t)*(f-1)_+,
$$
where $\omega$ is the regular part of the fundamental solution to \eqref{app:eq:nonloc.heat}.
Then we notice that the measure of the support of $v(t)$ is uniformly controlled,
$$
|\{v(t)>0\}|=|\{u(t)\geq 1\}|\leq \int_{\{u(t)\geq 1\}}u(t)\leq \|f\|_{\L^1(\mathbb{R}^N)}.
$$
Thus, using that $\|\omega(t)\|_{\L^\infty(\mathbb{R}^N)}\leq Ct^{-N/2}$ (see \cite{IgnatRossi}),
we obtain
$$
\int_{\R^N} v(t)\leq \e^{-t}\|(f-1)_+\|_{\L^1(\mathbb{R}^N)} + Ct^{-N/2}\|f\|_{\L^1(\mathbb{R}^N)}\|(f-1)_+\|_{\L^1(\mathbb{R}^N)}=O(t^{-N/2}).
$$
\end{proof}

If the initial data are bounded and compactly supported, we can
obtain quantitative estimates for the supports of $u$ and $v$. These
estimates are sharp, as can be checked by considering indicator
initial data.

\begin{lemma}Let $J$ be nonincreasing in the radial variable and
$f$ nonnegative, bounded and compactly supported, contained in the ball of radius
$R_f$. Then
$$
\supp(v(t))\subset B_{R_v}, \quad \supp(u(t))\subset
B_{\max\{R_f, R_v+R_J\}},\qquad
R_v=\|f\|_{\L^\infty(\R^N)}^{1/N}R_f.
$$
\end{lemma}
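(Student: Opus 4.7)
The plan is to bound $\supp(v(t))$ by reducing to Lemma~\ref{lem:cont.supp.v} via an approximation from above of $f$ by radial, strictly decreasing, continuous majorants with nearly optimal $\L^1$-mass, and then to deduce the bound on $\supp(u(t))$ from Lemma~\ref{lem:ut=0} applied to the integrated equation.

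Set $M := \|f\|_{\L^\infty(\R^N)}$. For each $\eps>0$ I would construct $g_\eps \in \L^1_+(\R^N)\cap\C_0(\R^N)$, radial and strictly decreasing in the radial variable, satisfying $g_\eps \ge M\mathds{1}_{B_{R_f}} \ge f$ a.e.\ and
$$
\int_{\R^N} g_\eps \;\le\; \omega_N(M+\eps)(R_f+\eps)^N + \eps.
$$
A concrete choice is $g_\eps = \tilde g_\eps + \eps h$, where $\tilde g_\eps$ is a smooth radial function equal to $M+\eps$ on $\overline{B}_{R_f}$, identically $0$ outside $B_{R_f+\eps}$, and smoothly interpolating in between, and $h$ is a fixed positive, strictly radially decreasing Schwartz function; the small perturbation $\eps h$ upgrades the plateau of $\tilde g_\eps$ to strict monotonicity on $[0,\infty)$ while contributing negligibly to the $\L^1$-mass. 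Applying Lemma~\ref{lem:cont.supp.v} with this majorant yields $\supp(v(t)) \subset B_{R_\eps}$ with $R_\eps=(\int g_\eps/\omega_N)^{1/N}$, and letting $\eps\to 0$ gives $R_\eps \to M^{1/N}R_f = R_v$, hence the first inclusion.

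For the second inclusion, Lemma~\ref{lem:ut=0} together with the previous step gives $\Dsupp(u_t(t)) \subset B_{R_v}+B_{R_J}=B_{R_v+R_J}$. Then for any nonnegative $\varphi\in\C^\infty_{\rm c}\bigl((B_{\max\{R_f,R_v+R_J\}})^c\bigr)$, testing \eqref{eq:form.int} shows that $\int f\varphi=0$ (since $\supp f\subset B_{R_f}$) and $\int u_t(s)\varphi=0$ for all $s\in[0,t]$, so $\int u(t)\varphi=0$ and the distributional inclusion follows. The main obstacle is the construction of $g_\eps$: strict radial monotonicity on all of $[0,\infty)$ rules out any genuine indicator profile, so the majorant must be built as a plateau-type shape plus a small strictly decreasing correction, and one must verify quantitatively that this perturbation does not spoil the optimal scaling $R_\eps\to M^{1/N}R_f$. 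Everything else reduces to routine checking and to noting that the indicator datum $f=M\mathds{1}_{B_{R_f}}$ shows the bound is sharp.
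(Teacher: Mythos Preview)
Your proposal is correct and follows essentially the same approach as the paper: approximate $M\mathds{1}_{B_{R_f}}$ from above by radial, strictly decreasing functions $g_\eps\in\L^1_+\cap\C_0$, apply Lemma~\ref{lem:cont.supp.v} to bound $\supp(v(t))$ by the ball of radius $(\int g_\eps/\omega_N)^{1/N}$, pass to the limit, and then invoke Lemma~\ref{lem:ut=0} for the support of $u$. The paper's own proof is terser and leaves the explicit construction of the majorants implicit, but the strategy is identical.
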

\begin{proof}
To obtain the estimate for the support of $v$ we use
Lemma~\ref{lem:cont.supp.v} with functions $g_n$ approximating
$\mathds{1}_{B_{R_f}}\|f\|_{\L^\infty(\R^N)}$ from above, and then
pass to the limit in $n$. The estimate for the support of $u$ then
follows from Lemma~\ref{lem:ut=0}.
\end{proof}

\remark
    A better result should hold with a radius $R_v$ depending on the mass of the initial data above level one, instead of the $\L^\infty$-norm of $f$.

%%%%%%%%%%%%%%%%%%%%%%%%%%%%%%%%%%%%%%%
\subsection{Creation of mushy regions}
\label{subsec.mushy}

    Since the $\L^1$-solutions to our problem are not necessarily continuous,
    we need a distributional definition of the mushy region.
\begin{definition}The mushy region at time $t\geq0$ of a nonnegative ($\L^1$- or $\mathrm{BC}$-) solution $u$ to~\eqref{eq:stefan.nolocal} is
    $$
        \mathcal{M}(t):=\mathrm{Int}\Big(\Dsupp(u(t))\setminus\Dsupp((u(t)-1)_+)\Big).
    $$
\end{definition}

\remark
When $u$ is continuous, $\mathcal{M}(t)=\{0<u(x,t)<1\}$.

Here comes one of the main features of our model:  it allows the
creation of mushy regions.
    \begin{theorem}
    \label{thm:mushy.creation} Let $f\in\L^1_+(\R^N)$ be a nontrivial initial data such
    that $\Dsupp(f)=\Dsupp((f-1)_+)$.
    Then,
     $$
     \mathcal{M}(t)=\big\{0<\dist\big(x,\Dsupp(f))<R_J\big\},\quad
    t\in[0,t_0],\quad t_0=\frac{1}{\|J\|_{\L^\infty(\R^N)}\|f\|_{\L^1(\R^N)}}.
    $$
\end{theorem}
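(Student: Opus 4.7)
I would prove the identification by separately locating the distributional supports $\Dsupp(v(t))$ and $\Dsupp(u(t))$ on $[0,t_0]$, then taking the interior of their difference. The time scale $t_0=1/(\|J\|_{\L^\infty(\R^N)}\|f\|_{\L^1(\R^N)})$ is precisely the one produced in the proof of Theorem~\ref{thm:free.bndry}, so the upper bounds on the supports come essentially for free from that argument.

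\textbf{Location of $\Dsupp(v(t))$.} The estimate \eqref{eq:estimate.support.v.small.times} in the proof of Theorem~\ref{thm:free.bndry} gives $\Dsupp(v(t))\subset\Dsupp(f)$ for $t\in[0,t_0]$. The retention property (Proposition~\ref{prop:retention.l1}) yields $\Dsupp(v(t))\supset\Dsupp(v(0))=\Dsupp((f-1)_+)$, which equals $\Dsupp(f)$ by hypothesis. Hence $\Dsupp(v(t))=\Dsupp(f)$ throughout $[0,t_0]$.

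\textbf{Location of $\Dsupp(u(t))$.} The containment $\Dsupp(u(t))\subset\Dsupp(f)+\overline{B_{R_J}}$ follows from Lemma~\ref{lem:ut=0} integrated in time, combined with the identification of $\Dsupp(v(s))$ above. The delicate part is the reverse inclusion: I would show $x_0\in\Dsupp(u(t))$ whenever $0<\dist(x_0,\Dsupp(f))<R_J$ and $t\in(0,t_0]$. Picking $\rho>0$ small enough that $\overline{B_\rho(x_0)}$ is disjoint from $\Dsupp(f)$ but still lies in the open $R_J$-neighborhood, the distributional vanishing of $f$ and of $v(s)$ on $B_\rho(x_0)$, together with \eqref{eq:form.int} and Fubini, gives
\begin{equation*}
\int_{B_\rho(x_0)}u(t)\dx=\int_0^t\!\!\int_{\R^N}v(s,z)\,\psi(z)\dz\ds,\qquad \psi(z):=\int_{B_\rho(x_0)}J(y-z)\dy.
\end{equation*}
The distributional retention bound $v(s)\geq\e^{-s}(f-1)_+$ (obtained as in Proposition~\ref{prop:retention.l1}, applied to $v$ rather than $u$) then lowers the right-hand side to $(1-\e^{-t})\int_{\R^N}(f-1)_+\psi$.

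\textbf{Conclusion and main obstacle.} The continuous function $\psi$ is strictly positive near any point of $\Dsupp(f)$ lying within distance $R_J-\rho$ of $x_0$; such a point exists because $\dist(x_0,\Dsupp(f))<R_J$ and $\Dsupp(f)$ is closed, so the pairing of $\psi$ with $(f-1)_+$ is strictly positive (using $\Dsupp((f-1)_+)=\Dsupp(f)$), forcing $x_0\in\Dsupp(u(t))$. Combining the three steps and taking the interior of $\Dsupp(u(t))\setminus\Dsupp(v(t))=\Dsupp(u(t))\setminus\Dsupp(f)$ produces the open shell $\{0<\dist(x,\Dsupp(f))<R_J\}=\mathcal{M}(t)$. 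The main obstacle is this last strict-positivity step: it requires both a quantitative lower bound on $v(s)$ for small $s$ (furnished by retention) and the strict positivity of $J$ sufficiently close to the origin, so that $\psi$ does not vanish on the relevant portion of $\Dsupp(f)$.
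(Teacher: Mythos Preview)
Your approach is essentially the same as the paper's: identify $\Dsupp(v(t))=\Dsupp(f)$ via retention plus \eqref{eq:estimate.support.v.small.times}, bound $\Dsupp(u(t))$ from above via Lemma~\ref{lem:ut=0}, and for the lower bound test against a function supported in the shell so that only the $J\ast v$ term survives. The paper is slightly more direct at the last step --- it simply uses that $J\ast v(s)$ is continuous with support $\Dsupp(f)+B_{R_J}$, hence strictly positive on the shell --- so your retention lower bound $v(s)\geq\e^{-s}(f-1)_+$ is not needed. One small inaccuracy: the positivity you need for $\psi$ is not that $J>0$ near the origin, but that $J>0$ somewhere near radius $\dist(x_0,\Dsupp(f))\in(0,R_J)$; this is guaranteed by the standing assumption that $\supp J=\overline{B_{R_J}}$.
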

\begin{proof} We first observe that the assumptions on the initial data imply that there are no mushy regions initially, $\mathcal{M}(0)=\emptyset$. Moreover,
\begin{equation}
\label{eq:equality.supports.short.times}
\Dsupp(v(t))=\Dsupp(f),\qquad t\in[0,t_0].
\end{equation}
The upper inclusion is just
\eqref{eq:estimate.support.v.small.times}, and the lower one follows
from the retention property and the equality of the supports of $f$
and $(f-1)_+$.

The inclusion $
\mathcal{M}(t)\subset\big\{0<\dist\big(x,\Dsupp(f))<R_J\big\}$ is an
immediate consequence of equations~\eqref{eq:estimate.supp.u.above}
and~\eqref{eq:equality.supports.short.times}.

Let us turn then to the other inclusion. Let $\varphi$ be a
nonnegative and nontrivial test function compactly supported in
$\{0<\dist(x,\Dsupp(f)\big)<R_J\}$.
Using~\eqref{eq:estimate.support.v.small.times}, we have
$$
\begin{aligned}
        \int_{\R^N}u(t)\varphi &= \int_{\R^N}f\varphi +\int_0^t\int_{\R^N} J\ast v(t)\varphi -
            \int_0^t\int_{\R^N}v(t)\varphi\\
                 &=  \int_0^t\int_{\R^N} J\ast v(t)\varphi,\qquad
                 t\in[0,t_0].
\end{aligned}
$$
Since  $\supp(J\ast v(t))=
\Dsupp((v(t))+B_{R_J}=\Dsupp(f)+B_{R_J}$, we conclude that
    $$
    \int_{\R^N} u(t)\varphi=\int_{\R^N} J\ast v(t)\varphi>0, \quad t\in[0,t_0].
    $$
    In other words,
    $$
    \{0<\dist(x,\Dsupp(f))\big)<R_J\} \subset \Dsupp(u(t)), \qquad
    t\in[0,t_0],
    $$
    which combined with~\eqref{eq:equality.supports.short.times}
    gives the required inclusion.
\end{proof}

\subsection{Emergence of disconnected water regions}
\label{subsect:disconnected}

Another interesting feature of our model  is that
disconnected components of water may appear suddenly at a positive
distance from the already existing water components. This is another
example of a phenomenon that occurs for the non-local model but not
for the local one. The reason is that, contrary to the local model,
problem~\eqref{eq:stefan.nolocal} allows middle-range interactions,
(up to  a distance $R_J$).

Let us now construct examples exhibiting this phenomenon.  We will
keep things simple, so that the underlying mechanism is better
understood. But the result can be easily generalized to more complex
situations.

Let us assume that $f$ is a bounded and continuous initial data with
three well differentiated zones:
\begin{equation}
\label{eq:hipotheses.zones}
\begin{cases} \text{ a \lq\lq warm'' water
zone, $\mathcal{W}$, where the enthalpy is above~1;}\\
\text{ a low-enthalpy ice zone, $\mathcal{I}_L$, where $f$
is clearly below~1;}\\
 \text{ a \lq\lq high''-enthalpy ice zone, $\mathcal{I}_H$, where $f$ is close to, but below
 level~1.}
\end{cases}
\end{equation}
We shall see that if the enthalpy in $\mathcal{I}_H$ is close
enough to 1 and this zone is not too far from $\mathcal{W}$, then
$\mathcal{I}_H$ melts  before the low-enthalpy zone does. Therefore,
if $\mathcal{I}_L$ \lq\lq separates'' $\mathcal{W}$ and
$\mathcal{I}_H$ initially,  a new disconnected component will emerge
in the water zone.
It is enough to prove it for $\mathcal{I}_H=\{x\}$,
the general case resulting from this.
\begin{definition}
    A set $\mathcal{S}\subset\R^N$ separates $\mathcal{A}$ and $\mathcal{B}$ if:
    \begin{itemize}
    \item[(i)] $\mathcal{S}^c$ has at least two open connected components;
    \item[(ii)] $\mathcal{A}$ and $\mathcal{B}$ lie in two different connected components of $\mathcal{S}^c$.
    \end{itemize}
\end{definition}
\begin{theorem} Let $J>0$  in $B_{R_J}$, $x\in\R^N$ and consider two non-empty sets
$\mathcal{W},\mathcal{I}_L\subset\R^N$ such that
\begin{itemize}
\item $\mathcal{W}$ is open;
\item $\dist(x,\mathcal{W})<R_J$;
\item $\mathcal{I}_L$ separates $\overline{\mathcal{W}}$ and $\{x\}$.
\end{itemize}
Let $f\in\mathrm{BC}_+(\R^N)$ such that
\begin{itemize}
\item $f>1$ in $\mathcal{W}$, $f<1$ in $\overline{\mathcal{W}}^c$;
\item $0\le f\le 1-\eta$ in $\mathcal{I}_L$ for some fixed
$\eta\in(0,1)$;
\end{itemize}
Then there exists $\eps\in(0,\eta)$ such that if \ $1-\eps<f(x)<1$,
then in a finite time there appears in the water zone a new connected water component $\mathcal{C}_{x}$
containing $x$.
\end{theorem}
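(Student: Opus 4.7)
The plan is to show that the point $x$ crosses the melting threshold $u=1$ in so short a time that the insulating barrier $\mathcal{I}_L$ has no opportunity to begin melting; once that is in place, the separation hypothesis forces the nascent water region around $x$ to be disjoint from $\mathcal{W}$. Throughout I will work in $\mathrm{BC}$-theory (Theorem~\ref{thm:stefan.noloc.BC}), where $u$, $v$ and $J\ast v$ are continuous, so that ``water zones'' are genuinely open sets.

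The key quantity is $\delta_0:=(J\ast v_0)(x)$ with $v_0:=(f-1)_+$. Because $\mathcal{W}$ is open with $\dist(x,\mathcal{W})<R_J$, the set $\mathcal{W}\cap B_{R_J}(x)$ is open and nonempty; on it $v_0>0$ and $y\mapsto J(x-y)$ is strictly positive by the hypothesis $J>0$ on $B_{R_J}$, so $\delta_0>0$. By continuity of $J\ast v$, there is $\tau>0$ with $(J\ast v)(x,t)\ge\delta_0/2$ for every $t\in[0,\tau]$. From below: while $u(x,t)\le 1$ one has $v(x,t)=0$ and hence $\partial_t u(x,t)=(J\ast v)(x,t)\ge\delta_0/2$; integrating yields $u(x,t)\ge f(x)+(\delta_0/2)t$ as long as $u(x,\cdot)$ stays below~$1$. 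Hence, if $f(x)>1-\eps$ with $\eps<\delta_0\tau/4$, then $u(x,\cdot)$ reaches level~$1$ at some first time $s^\star\le 2\eps/\delta_0<\tau$; since $\partial_t u(x,s^\star)\ge\delta_0/2>0$, continuity forces $u(x,t)>1$ on a right-neighborhood of $s^\star$.

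From above I would use Lemma~\ref{lem:subcaloric.linfty.bc}, which gives $\|v(t)\|_{\L^\infty(\R^N)}\le M:=\|f\|_{\L^\infty(\R^N)}-1$; feeding this into the integral equation produces $u(y,t)\le f(y)+Mt$ for all $(y,t)$. Restricted to $\mathcal{I}_L$ this reads $u(y,t)\le 1-\eta+Mt<1$ as long as $t<\eta/M$. Choosing
\[
\eps<\min\Bigl\{\eta,\ \tfrac{\delta_0\tau}{4},\ \tfrac{\delta_0\eta}{4M}\Bigr\}
\]
forces $s^\star<\eta/(2M)$, so I can select $s_1$ slightly past $s^\star$ with still $s_1<\eta/M$; at that moment $u(x,s_1)>1$ while $u(\cdot,s_1)<1$ throughout $\mathcal{I}_L$.

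Finally I would conclude by topology. The set $U:=\{y:u(y,s_1)>1\}$ is open by continuity and disjoint from $\mathcal{I}_L$, so the connected component $\mathcal{C}_x$ of $U$ through $x$ is a connected subset of $\mathcal{I}_L^c$ containing $x$, and therefore lies in the connected component of $\mathcal{I}_L^c$ that contains $x$. By the separation hypothesis, this component is disjoint from $\overline{\mathcal{W}}$, so $\mathcal{C}_x\cap\mathcal{W}=\emptyset$. On the other hand, retention (Proposition~\ref{prop:retention.bc}) implies $v(\cdot,s_1)>0$ throughout $\mathcal{W}$, so $\mathcal{W}$ lies in a distinct water component of $U$; hence $\mathcal{C}_x$ is indeed a \emph{new} connected water component around $x$. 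The main technical hurdle is precisely the quantitative balance between the three time-scales $\tau$, $\eta/M$ and the melting time at $x$: the smallness of $\eps$ must be tuned so that $x$ liquefies strictly before $\mathcal{I}_L$ can, which is exactly what the three conditions in the choice of $\eps$ encode.
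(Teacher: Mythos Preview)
Your proof is correct and follows the same three-step scheme as the paper (the barrier $\mathcal{I}_L$ stays frozen for a short time via the crude bound $u\le f+Mt$; the point $x$ melts within that window once $\eps$ is small; the separation hypothesis then forces a new component). The only minor difference is in the second step: you extract the positive lower bound on $(J\ast v)(x,t)$ from continuity in $t$, whereas the paper uses the retention inequality $v(\cdot,s)\ge e^{-s}v(\cdot,0)$ of Proposition~\ref{prop:retention.bc} to get the explicit estimate $(J\ast v)(x,s)\ge e^{-s}\delta_0$, which has the slight advantage that the resulting $\eps$ depends only on $\delta_0$, $\eta$ and $\|f\|_{\L^\infty}$ rather than on the dynamics through your unspecified $\tau$.
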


Figures \ref{fig.disconnect1} and \ref{fig.disconnect2} illustrate the phenomenon,
the exact meaning of notations being defined within the proof that follows.

\begin{center}
  \begin{figure}[ht!]
    \includegraphics[width=10cm]{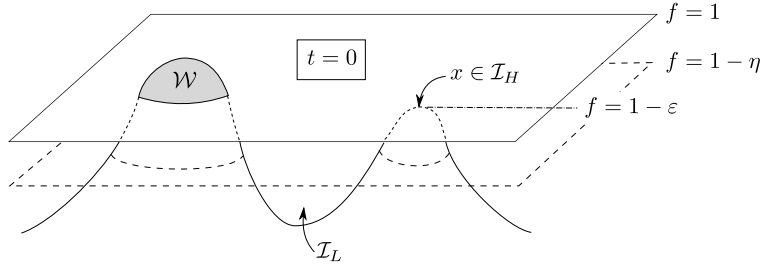}
    \caption{High-enthalpy zone near melting.}
    \label{fig.disconnect1}
  \end{figure}
\end{center}

\begin{center}
  \begin{figure}[ht!]
    \includegraphics[width=10cm]{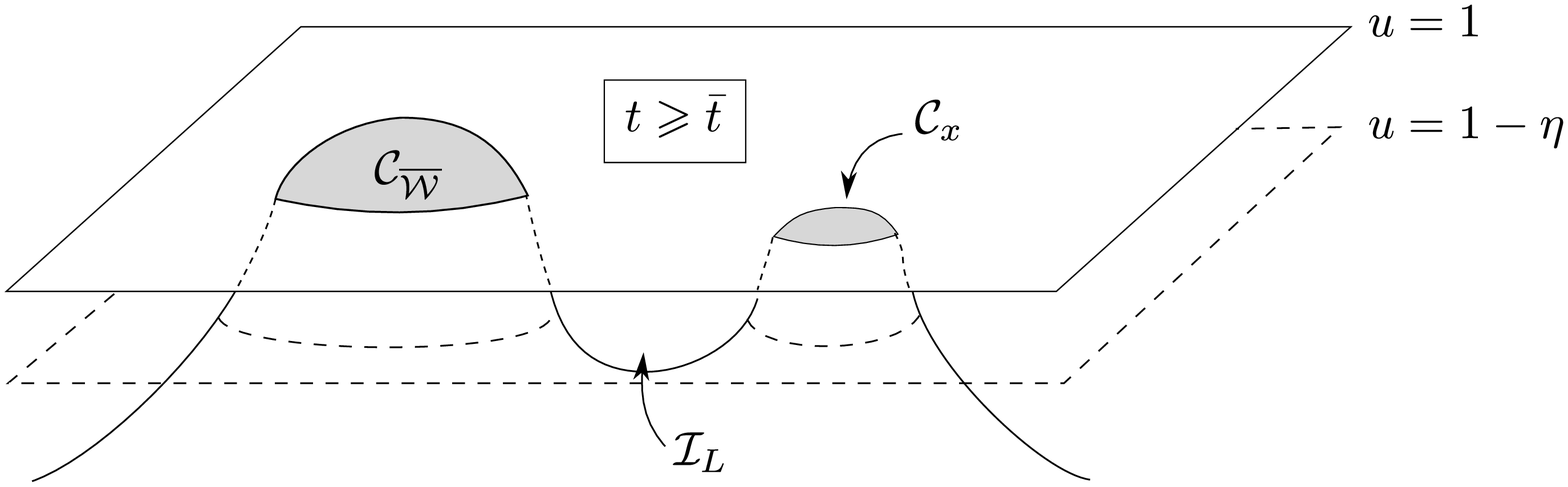}
    \caption{New disconnected component in water zone.}
        \label{fig.disconnect2}
  \end{figure}
\end{center}

\begin{proof} We proceed in three steps as follows.\smallskip

\noindent\textsc{Step 1 --}
\textit{For $0\leq t<\eta/\|f\|_{\L^\infty(\R^N)}$, $\mathcal{I}_L\cap\{v(t)>0\}=\emptyset$.}\smallskip

Let $y\in\mathcal{I}_L$ and $t<\eta/\|f\|_{\L^\infty(\R^N)}$. Using~\eqref{eq:bound.for.u},  we have $u(y,t)< 1$. Thus, such an $y$
remains in the ice zone.

\noindent\textsc{Step 2 --} \textit{Given $0<\bar t<\eta/\|f\|_{\L^\infty(\R^N)}$,
there exists $\eps_0>0$ such that for all $\eps\in(0,\eps_0)$, we have $x\in\{v(\bar t)>0\}$.} \smallskip

Assume on the contrary that $v(x,\bar t)=0$ for all $\eps>0$.
By our assumptions, since $\dist(x,\{f>1\})<R_J$, there exists $0<\rho<R_J$ and
$y_0\in\{f>1\}$ such that $\dist(x,y_0)<\rho$. Taking $\delta=(f(y_0)-1)/2>0$, we have
$f(y_0)>1+\delta$ so that the set
$$f^{(-1)}\big((1+\delta,+\infty)\big)=\{f>1+\delta\}$$
is open and contains $y_0$. Similarly, the ball $B_\rho(x)$ is also open and contains $y_0$
so that the intersection $\{f>1+\delta\}\cap B_\rho(x)$
contains at least a ball $B_\tau(y_0)$ centered at $y_0$ with a positive radius $\tau>0$.
This has two consequences that we use below: first, $\{f>1+\delta\}\cap B_\rho(x)$
has a positive Lebesgue measure; second, $x-B_\tau(y_0)\subset B_{R_J}$,
so that by assumption on $J$ (up to taking a $\tau'<\tau$),
$J(x-y)$ is uniformly bounded away from zero on $B_\tau(y_0)$.

Then, using the retention property for $v$,
Proposition~\ref{prop:retention.bc}, we have
$$
u(x,\bar t)>1-\eps + \int_0^{\bar t} (J\ast v)(x,s)\ds\ge 1-\eps +
\e^{-\bar t}\int_0^{\bar t} (J\ast v)(x,0)\ds.
$$
We estimate the integral as follows:
$$
\begin{array}{rcl}
(J\ast v)(x,0)&\geq&\displaystyle\int_{\{f>1+\delta\}\cap
B_{\rho}(x)}J(x-y)\delta\dy
\\[10pt]
&\geq& \displaystyle\delta\cdot\min_{B_\tau(y_0)}J(x-y)\cdot\big|\{f>1+\delta\}\cap B_\rho(x)\big|=C>0,
\end{array}
$$
with $C$ independent of $\eps$.
Hence, if $\eps>0$ is small enough we get
$$
u(x,\bar t)>1-\eps + \bar t\,\e^{-\bar t}C>1,
$$
which is a contradiction.

\noindent\textsc{Step 3 --} \textit{For $\bar t$ and $\eps$ as above, there is a new connected component
containing $x$ in the water zone.}\smallskip

Since $x$ and $\overline{\mathcal{W}}$ are separated by $\mathcal{I}_L$, there exist two open sets
$\mathcal{O}_1,\mathcal{O}_2$ such that $\mathcal{O}_1\cup\mathcal{O}_2\subset(\mathcal{I}_L)^c$,
$\mathcal{O}_1\cap\mathcal{O}_2=\emptyset$ and
$$\{x\}\in\mathcal{O}_1,\quad \overline{\mathcal{W}}\subset\mathcal{O}_2.
$$
Thus, initially all the water zone is contained in $\mathcal{O}_2$ while $x$ belongs to $\mathcal{O}_1$.

Now, by the retention property, we know that at time $\bar t$ the water zone still contains $\overline{\mathcal{W}}$.
But at time $\bar t$ the water zone also contains $x$. Hence we are in the following situation:
$$
\{v(\bar t)>0\}\supset \{x\}\cup\overline{\mathcal{W}}, \quad \{v(\bar t)>0\}\subset (\mathcal{I}_L)^c.
$$

At time $\bar t$, let us denote by $\mathcal{C}_x$ the connected water component containing $x$ and by
$\mathcal{C}_{\overline{\mathcal{W}}}$ the one containing $\overline{\mathcal{W}}$, which are both non-empty.
Since $\{v(\bar t)>0\}\subset (\mathcal{I}_L)^c=\mathcal{O}_1\cup\mathcal{O}_2$, the union being disjoint,
it follows that necessarily
$\mathcal{C}_x\subset\mathcal{O}_1,\ \mathcal{C}_{\overline{\mathcal{W}}}\subset\mathcal{O}_2$.
Hence we deduce that
$$\mathcal{C}_x\cap \mathcal{C}_{\overline{\mathcal{W}}}=\emptyset.$$
In other words, at time $\bar t$, a new component has appeared in $\mathcal{O}_1$ which was not present initially,
and it is even disconnected from all the water zones in $\mathcal{O}_2$.
\end{proof}

\remark
A similar phenomenon takes place for
non-continuous, integrable data. This can be proved either by using
$\L^1$-theory techniques, or   by approximation from above and from
below with continuous data.

%%%%%%%%%%%%%%%%%%%%%%%%%%%%%%%%%%%%%%%%%%%%%%%%%%%%%%%%%%%%%%%%%%%%%%%%%%%%%%%%%
\section{The local Stefan problem as a limit in the macroscopic scale}
\label{sect:limit.epsilon} \setcounter{equation}{0}

If the support of the kernel is shrunk to a point through a suitable
rescaling, we recover the local model.  For the case of a bounded
domain with Neumann boundary data, such convergence  was already
considered in \cite{AndreuMazonRossiToledoBook2010} in the abstract
setting of semi-group theory. We will give here an alternative, more
direct proof, adapted to our problem. In addition, we will prove
that mushy regions disappear in the limit.

Given a fixed initial datum $f\in\L^1(\R^N)\cap\L^\infty(\R^N)$, we
consider the problem
\begin{equation}
\label{eq:stefan.eps}
\partial_t u^\eps=\frac{1}{\eps^2}(J_\eps\ast
v^\eps - v^\eps),\qquad v^\eps=(u^\eps-1)_+,\qquad u^\eps(\cdot,0)=f,
\end{equation}
where $J_\eps=\eps^{-N}J(\cdot/\eps)$. Since $J_\eps$ is a unit mass
kernel, compactly supported in the ball $B_{\eps R_J}$, the various
properties of solutions of \eqref{eq:stefan.nolocal} that we derived
in the previous sections are still  valid for solutions of
\eqref{eq:stefan.eps}. This latter problem admits a weak
formulation, which will show to be quite convenient when passing to the
limit: for any test function $\phi\in \C^\infty_{\rm
c}(\R^N\times[0,\infty))$ we have
\begin{equation}\label{eq:weak.stefan.eps}
\int_{\R^N} u^\eps(t)\phi (t)=\int_{\R^N} f\phi(0)+
     \int_0^t\int_{\R^N}(\partial_t\phi) u^\eps+\frac{1}{\eps^2}\int_0^t\int_{\R^N}(J_\eps\ast\phi-\phi)v^\eps.
\end{equation}
This follows from Fubini's theorem, since $J_\eps$ is symmetric.

\begin{lemma}
\label{lem:precompactness}Let $f\in\L^1(\mathbb{R}^N)\cap \L^\infty(\R^N)$. The family $\{u^\eps\}$ is relatively compact
   in $\L_{\rm loc}^1(\R^N\times(0,\infty))$.
\end{lemma}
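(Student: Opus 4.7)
The plan is to verify the Fr\'echet--Kolmogorov compactness criterion for $\{u^\eps\}$ on any open set $\omega\subset\subset\R^N\times(0,\infty)$, say $\omega\subset\R^N\times(0,T)$ for some $T>0$. Since $J_\eps$ is a unit-mass, compactly supported kernel and the $\eps^{-2}$ prefactor amounts only to a time rescaling, the basic theory of Section~\ref{sect:basic} applies to each $u^\eps$; in particular, the $\L^1$-contraction property (Corollary~\ref{cor:contraction.stefan.noloc}) and the conservation of energy (Theorem~\ref{corol:mass.conservation}) hold. The uniform $\L^1$-bound is then $\|u^\eps\|_{\L^1(\omega)}\le T\|f\|_{\L^1(\R^N)}$, and the spatial translations are controlled by applying $\L^1$-contraction to the data $f(\cdot+h)$ and $f$:
$$
\int_0^T\|u^\eps(\cdot+h,t)-u^\eps(\cdot,t)\|_{\L^1(\R^N)}\dt\le T\|f(\cdot+h)-f\|_{\L^1(\R^N)},
$$
which tends to zero as $h\to 0$, uniformly in $\eps$.

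The \emph{main obstacle} is the time-translation estimate, since $\partial_tu^\eps=\eps^{-2}(J_\eps\ast v^\eps-v^\eps)$ carries the singular prefactor $\eps^{-2}$ and the direct estimate used in Corollary~\ref{cor:contraction.stefan.noloc} breaks down. The key observation is that this factor is compensated on smooth test functions: thanks to the evenness and unit mass of $J$, a standard Taylor expansion yields, for every $\rho\in\C^2_{\rm c}(\R^N)$,
$$
\Bigl\|\tfrac{1}{\eps^2}(J_\eps\ast\rho-\rho)\Bigr\|_{\L^1(\R^N)}\le\tfrac{1}{2}\Bigl(\int_{\R^N}J(z)|z|^2\dz\Bigr)\|D^2\rho\|_{\L^1(\R^N)},
$$
uniformly in $\eps$. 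Apply this with $\rho=\rho_\delta$ a standard mollifier, for which $\|D^2\rho_\delta\|_{\L^1(\R^N)}\le C/\delta^{2}$, and set $u^\eps_\delta(\cdot,t):=u^\eps(\cdot,t)\ast\rho_\delta$. Using Young's convolution inequality together with $\|v^\eps(t)\|_{\L^1(\R^N)}\le\|f\|_{\L^1(\R^N)}$, we obtain
$$
\|\partial_t u^\eps_\delta(t)\|_{\L^1(\R^N)}=\Bigl\|v^\eps(t)\ast\tfrac{1}{\eps^2}(J_\eps\ast\rho_\delta-\rho_\delta)\Bigr\|_{\L^1(\R^N)}\le\frac{C\|f\|_{\L^1(\R^N)}}{\delta^{2}},
$$
independently of $\eps$, and hence $\|u^\eps_\delta(t+s)-u^\eps_\delta(t)\|_{\L^1(\R^N)}\le Cs/\delta^{2}$.

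The mollification error is controlled by a further application of $\L^1$-contraction: for every $t\ge 0$,
$$
\|u^\eps(t)-u^\eps_\delta(t)\|_{\L^1(\R^N)}\le\int\rho_\delta(y)\|u^\eps(\cdot,t)-u^\eps(\cdot-y,t)\|_{\L^1(\R^N)}\dy\le\omega_f(\delta),
$$
where $\omega_f(\delta):=\sup_{|y|\le\delta}\|f(\cdot-y)-f\|_{\L^1(\R^N)}\to 0$ as $\delta\to 0$, uniformly in $\eps$ and $t$. Given $\eta>0$, we first choose $\delta$ so that $2\omega_f(\delta)<\eta/2$, then $s_0$ so that $Cs_0/\delta^{2}<\eta/2$; the triangle inequality then yields
$$
\|u^\eps(t+s)-u^\eps(t)\|_{\L^1(\R^N)}<\eta\qquad\text{for all }s\in(0,s_0),
$$
uniformly in $\eps$ and $t\in(0,T)$. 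Integrating in $t$ and combining with the spatial bound, the Fr\'echet--Kolmogorov criterion provides relative compactness of $\{u^\eps\}$ in $\L^1(\omega)$; exhausting $\R^N\times(0,\infty)$ by such sets $\omega$ delivers the stated compactness in $\L^1_{\rm loc}(\R^N\times(0,\infty))$.
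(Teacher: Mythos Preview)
Your proof is correct, and it follows a genuinely different route from the paper's. You verify the full space--time Fr\'echet--Kolmogorov criterion on $\omega\subset\subset\R^N\times(0,\infty)$; the delicate part, time--equicontinuity, you handle by spatially mollifying $u^\eps$ so that the singular prefactor $\eps^{-2}$ is transferred onto the smooth kernel $\rho_\delta$ via the Taylor estimate $\|\eps^{-2}(J_\eps\ast\rho_\delta-\rho_\delta)\|_{\L^1}\le C\delta^{-2}$, and then trade $\delta$ against $s$. The paper, by contrast, \emph{avoids} the time--translation estimate altogether: it applies Fr\'echet--Kolmogorov only in the space variable at each fixed $t$, extracts a subsequence converging a.e.~in $(x,t)$, and then upgrades to $\L^1_{\rm loc}$ convergence in space--time via the uniform bound $\|u^\eps(t)\|_{\L^\infty}\le\|f\|_{\L^\infty}$ and dominated convergence. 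Your argument is longer but more robust: it never uses the hypothesis $f\in\L^\infty(\R^N)$, so it actually establishes the lemma for all $f\in\L^1(\R^N)$, whereas the paper's short argument genuinely needs the $\L^\infty$ bound for the dominated--convergence step.
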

\begin{proof} We first prove the relative compactness of $\{u^\eps(t)\}$ in $\L^1_{\rm loc}(\R^N)$ for all $t>0$ by means of  Frechet-Kolmogorov's compactness criterium. To this aim we use the $\L^1$-contraction property to show that: (i) the functions $u^\eps(t)$ are uniformly bounded in $\L^1(\R^N)$; and (ii)  for any compact set $\omega\subset\R^N$ $$\int_\omega|u^\eps(x+h,t)-u^\eps(x,t)|\dx\leq \int_{\R^N}|f(x+h)-f(x)|\dx=o_h(1),$$
   where $o_h(1)$ tends to 0 as  $h\to0$ independently of $\eps$. Hence, along a subsequence,
   $u^\eps(t)\to u(t)$ in $\L^1_{\rm loc}(\R^N)$, for some function $u(t)\in\L^1(\R^N)$.
   We infer that $u^\eps(x,t)$ converges for almost every $(x,t)$.

   Since $\|u^\eps(t)\|_\infty\leq\|f\|_\infty$, we may now use the dominated convergence theorem to prove that
   $u^\eps$ converges to $u$ in $\L^1_{\rm loc}(\R^N\times[0,\infty))$.
\end{proof}

This compactness result gives convergence along subsequences. The
possible limit functions turn out to be weak solutions to the local
Stefan problem
\begin{equation}
\label{eq:problem.local.stefan.momentum}
\partial_tu=\frac{m_2}{2}\Delta(u-1)_+,\quad u(\cdot,0)=f,
\end{equation}
where $m_2:=\int_{\R^N}|z|^2J(z)\dz$ is the second-order momentum of
the kernel $J$, which is finite, since $J$ is compactly supported.
Since this problem has a unique weak
solution~\cite{AndreucciKorten1993}, convergence is not restricted
to subsequences.

\begin{theorem}
 Let  $f\in\L^1(\mathbb{R}^N)\cap \L^\infty(\R^N)$. The sequence  $\{u^\eps\}$ of solutions to
\eqref{eq:stefan.eps} converges  as $\eps\to0$  in $\L^1_{\rm
loc}(\mathbb{R}^N\times[0,\infty))$ to the unique weak solution
    of~\eqref{eq:problem.local.stefan.momentum}.
\end{theorem}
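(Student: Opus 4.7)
The plan is to combine the compactness furnished by Lemma~\ref{lem:precompactness} with a passage to the limit in the weak formulation \eqref{eq:weak.stefan.eps}, and close the argument by invoking the uniqueness theorem for the local Stefan problem cited in the statement.

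First, Lemma~\ref{lem:precompactness} yields a subsequence (still denoted $u^\eps$) with $u^\eps\to u$ in $\L^1_{\rm loc}(\R^N\times(0,\infty))$ and a.e. Since $s\mapsto(s-1)_+$ is $1$-Lipschitz, the temperatures satisfy $v^\eps\to v:=(u-1)_+$ in the same sense. The uniform bound $\|u^\eps(t)\|_{\L^\infty(\R^N)}\le\|f\|_{\L^\infty(\R^N)}$ from Lemma~\ref{lem:bound.infty} is preserved in the limit and, together with the compact support of the test functions below, will let me promote $\L^1_{\rm loc}$ convergences to $\L^p_{\rm loc}$ whenever needed via dominated convergence.

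The heart of the argument is identifying the limit of the rescaled nonlocal operator acting on a smooth test function. For $\phi\in\C^\infty_{\rm c}(\R^N)$, I Taylor-expand $\phi(x-y)=\phi(x)-\nabla\phi(x)\cdot y+\tfrac12 y^\top D^2\phi(x)\,y+R_3(x,y)$ with $|R_3|\le C_\phi|y|^3$. Integrating against $J_\eps(y)\dy$, the linear term vanishes because $J$ is radially symmetric, the quadratic term reduces to a multiple of $\eps^2\Delta\phi(x)$ by means of the identity $\int J(z)z_iz_j\dz=\delta_{ij}\int J(z)z_1^2\dz$, and the cubic remainder is $O(\eps^3)$ because $J$ has compact support. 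After dividing by $\eps^2$, this shows
$$\frac{J_\eps\ast\phi-\phi}{\eps^2}\longrightarrow c\,\Delta\phi \quad\text{uniformly on }\R^N,$$
where $c$ is the constant appearing on the right-hand side of \eqref{eq:problem.local.stefan.momentum}, together with a uniform bound of order $\|D^2\phi\|_{\L^\infty(\R^N)}$ independent of $\eps$.

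With this in hand I pass to the limit term by term in \eqref{eq:weak.stefan.eps}. The linear terms $\int_{\R^N}u^\eps(t)\phi(t)$, $\int_{\R^N}f\phi(0)$ and $\int_0^t\int_{\R^N}(\partial_t\phi)u^\eps$ converge by $\L^1_{\rm loc}$ convergence and the $\L^\infty$ bound. For the diffusion term, I keep the convolution on $\phi$ by symmetry of $J_\eps$ and write $\int_0^t\int_{\R^N}\eps^{-2}(J_\eps\ast\phi-\phi)v^\eps$: the first factor is uniformly bounded and converges uniformly to $c\Delta\phi$, while $v^\eps\to v$ in $\L^1_{\rm loc}$, so dominated convergence (using the compact support of $\phi$ to localize) yields $\int_0^t\int_{\R^N}c\Delta\phi\cdot v$. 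Hence $u$ satisfies the weak form of \eqref{eq:problem.local.stefan.momentum} with initial datum $f$. By the uniqueness result of \cite{AndreucciKorten1993}, this weak solution is unique, so convergence holds not merely along a subsequence but for the full family $\{u^\eps\}$. The main technical point is the \emph{uniformity} in the Taylor step, which relies crucially on both the radial symmetry and the compact support of $J$; once that is secured, everything else reduces to standard dominated-convergence arguments built on the $\L^\infty$ and mass bounds from Section~\ref{sect:basic}.
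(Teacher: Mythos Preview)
Your proof is correct and follows essentially the same route as the paper: extract a convergent subsequence via Lemma~\ref{lem:precompactness}, identify the limit of $\eps^{-2}(J_\eps\ast\phi-\phi)$ by a Taylor expansion exploiting the symmetry and compact support of $J$, pass to the limit in the weak formulation~\eqref{eq:weak.stefan.eps}, and conclude full-family convergence from the uniqueness result of~\cite{AndreucciKorten1993}. Your write-up is in fact slightly more detailed than the paper's (you spell out the cubic remainder and the term-by-term passage to the limit), but the strategy is identical.
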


\begin{proof}
    Along a subsequence,  $\{u^\eps\}$ converges strongly
    in $\L^1_{\rm loc}(\mathbb{R}^N\times[0,\infty))$  to some function $u$, see Lemma~\ref{lem:precompactness}.
    We also have convergence for $\{v^\eps\}$ along some subsequence  to $v=(u-1)_+$.
   Since the $\L^1$-norms of solutions  to
\eqref{eq:stefan.eps} do not increase with time,
    see Corollary~\ref{cor:contraction.stefan.noloc}, the sequence
    $\{u_\eps\}$ is uniformly bounded in $\L^\infty((0,\infty);\L^1(\R^N))$.

If we perform a Taylor expansion and use the symmetry of $J$, we get
    $$
    \begin{array}{rcl}
    \displaystyle\frac{1}{\eps^2}\Big(J_\eps\ast\phi-\phi\Big)&=&\displaystyle\frac{1}{\eps^{2+N}}\int_{\R^N}J\Big(\frac{x-y}{\eps}\Big)
    \Big(\phi(x)-\phi(y)\Big)\dy\\[8pt]
    &=&\displaystyle\frac{1}{\eps^{2}}\int_{\R^N}J(z)\Big(\phi(x)-\phi(x-\eps
    z)\Big)\dz
    \\[8pt]
&=&\displaystyle\frac{m_2}{2}\Delta\phi+o(1)\qquad \text{as
}\eps\to0^+
    \end{array}
    $$
uniformly in $(x,t)$ (recall that $\phi$ is compactly supported and
smooth).
    Hence, passing to the limit in~\eqref{eq:weak.stefan.eps},  we get that $u$ is the unique weak solution to the local
    problem:
    for any test-function $\phi\in \C^\infty_{\rm c}(\R^N\times[0,\infty))$,
    $$
    \int_{\R^N} u(t)\phi (t)=\int_{\R^N} f\phi(0)+ \int_0^t\int_{\R^N}\phi_t u+
    \frac{m_2}{2}\int_0^t\int_{\R^N}(\Delta \phi)(u-1)_+.
    $$
\end{proof}

We next study the limit $\eps\to0$ for the mushy region
$\mathcal{M}^\eps(t)$ associated to $u^\eps$ for initial data such
that $\mathcal{M}^\eps(0)=\emptyset$. We first estimate the size of the mushy region for fixed $\eps>0$.

\begin{theorem}\label{thm:mushy.disappear}
    Let $f\in\L^1_+(\R^N)$ be a nontrivial initial data such
    that $\Dsupp(f)=\Dsupp((f-1)_+)$. For any  $t>0$,
    \begin{equation}\label{eq:mushy.disappear}
      \mathcal{M}^\eps(t)\subset \{x\in\mathbb{R}^N:0<\dist(x,\Dsupp(v^\eps(\cdot,t))< \eps R_J \}.
    \end{equation}
\end{theorem}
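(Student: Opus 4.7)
The plan is to establish the two inclusions separately. The lower bound $\dist(x,\Dsupp(v^\eps(\cdot,t)))>0$ for every $x\in\mathcal{M}^\eps(t)$ is immediate from the definition: $\mathcal{M}^\eps(t)$ is an open set disjoint from the closed set $\Dsupp(v^\eps(\cdot,t))$, so every $x\in\mathcal{M}^\eps(t)$ admits a neighborhood disjoint from it.

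For the upper bound, the idea is to freeze the support at time $t$ and use the retention property for $v^\eps$ (the rescaled analogue of Proposition~\ref{prop:retention.l1}) to extend the short-time argument of Theorem~\ref{thm:mushy.creation} to arbitrary $t$. Set
$$U:=\{x\in\R^N:\dist(x,\Dsupp(v^\eps(\cdot,t)))>\eps R_J\}.$$
Given any nonnegative $\varphi\in\C^\infty_{\rm c}(U)$ and any $s\in[0,t]$, retention gives $\Dsupp(v^\eps(\cdot,s))\subset\Dsupp(v^\eps(\cdot,t))$, and $\supp(J_\eps*\varphi)\subset\supp(\varphi)+\overline{B_{\eps R_J}}$ is at positive distance from $\Dsupp(v^\eps(\cdot,s))$. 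Using that $J_\eps$ is symmetric one obtains
$$\int_{\R^N}(J_\eps* v^\eps(s))\varphi=\int_{\R^N}v^\eps(s)(J_\eps*\varphi)=0,\qquad\int_{\R^N}v^\eps(s)\varphi=0.$$
Integrating the rescaled version of \eqref{eq:form.int} against $\varphi$ then yields $\int u^\eps(t)\varphi=\int f\varphi$, meaning $u^\eps(\cdot,t)=f$ as distributions on $U$.

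Now invoke the no-mushy-region assumption: $\Dsupp(f)=\Dsupp((f-1)_+)=\Dsupp(v^\eps(\cdot,0))\subset\Dsupp(v^\eps(\cdot,t))$ by retention once more, hence $\Dsupp(f)\cap U=\emptyset$. Since $u^\eps(\cdot,t)$ and $f$ coincide on $U$, their distributional supports agree there as well, so $\Dsupp(u^\eps(\cdot,t))\cap U=\Dsupp(f)\cap U=\emptyset$, and therefore $\mathcal{M}^\eps(t)\cap U=\emptyset$. This means $\mathcal{M}^\eps(t)\subset U^c=\{x:\dist(x,\Dsupp(v^\eps(\cdot,t)))\le\eps R_J\}$. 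Because $\mathcal{M}^\eps(t)$ is open and the interior of $U^c$ is $\{x:\dist(x,\Dsupp(v^\eps(\cdot,t)))<\eps R_J\}$, the inequality actually becomes strict, which combined with the lower bound gives \eqref{eq:mushy.disappear}.

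The main technical point is the careful handling of distributional supports, in particular the step passing from the distributional identity $u^\eps(\cdot,t)=f$ on $U$ to the equality $\Dsupp(u^\eps(\cdot,t))\cap U=\Dsupp(f)\cap U$; once this is in place, the proof reduces to a global-in-time version of Theorem~\ref{thm:mushy.creation} made available by the retention property.
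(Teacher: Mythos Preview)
Your argument is essentially the paper's own proof: use retention for $v^\eps$ (Proposition~\ref{prop:retention.l1}) to get $\Dsupp(v^\eps(\cdot,s))\subset\Dsupp(v^\eps(\cdot,t))$ for all $s\le t$, then apply the rescaled version of Lemma~\ref{lem:ut=0} to conclude that $u^\eps(\cdot,t)=f$ outside the $\eps R_J$-neighborhood of $\Dsupp(v^\eps(\cdot,t))$, and finally invoke the hypothesis $\Dsupp(f)=\Dsupp((f-1)_+)$ together with retention once more to see that $f$ vanishes there. The paper simply cites Lemma~\ref{lem:ut=0}; you unfold that step with an explicit test-function computation, but the structure is identical.

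One minor slip in your last paragraph: the assertion that the interior of $U^c=\{\dist(\cdot,\Dsupp(v^\eps(\cdot,t)))\le\eps R_J\}$ equals $\{\dist<\eps R_J\}$ is not true in general (take, for instance, $\Dsupp(v^\eps(\cdot,t))=\{0,2\eps R_J\}\subset\R$, where the interior of $U^c$ is $(-\eps R_J,3\eps R_J)$ but $\{\dist<\eps R_J\}$ omits the point $\eps R_J$). What your argument actually yields is $\mathcal{M}^\eps(t)\subset\{0<\dist\le\eps R_J\}$; the paper's proof gives the same non-strict bound and does not address the sharpening to a strict inequality either. This has no bearing on the subsequent corollary.
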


\begin{proof}
    Let $x\notin \Dsupp(v^\eps(\cdot, t))+B_{\varepsilon R_J}$ for some fixed $t>0$. Since the support of $v^\eps(t)$ is nondecreasing, then for any
    $0\leq s\leq t$, $x\notin \Dsupp(v^\eps(\cdot, s))+B_{\varepsilon R_J}$.     Then  Lemma~\ref{lem:ut=0}
   implies that $u^\eps(x,t)=f(x)=0$, which implies~\eqref{eq:mushy.disappear}.
 \end{proof}

Unfortunately, since the convergence of the functions $\{u^\eps\}$
is rather weak, this is not enough to prove the convergence of the
mushy regions. However, we will be able to prove that their limsup,
$$
\mathcal{M}^*(t)=\limsup_{\eps\to0} \mathcal{M}^\eps(t) = \bigcap_{\eta>0}\bigcup_{\eta<\eps}\mathcal{M}^\eps(t),
$$
consisting of all points $x$ such that for any $\eta>0$ there exists
an $\eps\in(0,\eta)$ such that $x\in\mathcal{M}^\eps(t)$, is a
negligible set.

Recall that for the local problem, under our assumptions on the initial data no mushy regions are created, so that the supports of $v(t)$ and  $u(t)$ coincide for all $t\ge0$. Moreover, $v$ is continuous in $\R^N\times(0,\infty)$ \cite{CF}, hence the distributional support of $v(t)$ can be understood as the closure of the set $\{v(t)>0\}$.

\begin{corollary}
    Let $f\in\L^1_+(\R^N)$ such
    that $\Dsupp(f)=\Dsupp((f-1)_+)$. Then, for any $t>0$,
    $\mathcal{M}^*(t)$ has zero $N$-dimensional Lebesgue measure.
\end{corollary}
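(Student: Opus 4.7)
The plan is to reduce the statement to the classical fact that the free boundary of the local one-phase Stefan problem has zero $N$-dimensional Lebesgue measure, by combining Theorem~\ref{thm:mushy.disappear} with the $\L^1_{\rm loc}$-convergence $v^\eps(\cdot,t)\to v(\cdot,t)$ established earlier in this section.

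Fix a compact $K\subset\R^N$. The family $\bigcup_{0<\eps<\eta}\mathcal{M}^\eps(t)\cap K$ is decreasing in $\eta>0$ with intersection $\mathcal{M}^*(t)\cap K$, so continuity of Lebesgue measure yields
$$|\mathcal{M}^*(t)\cap K|=\lim_{\eta\to 0^+}\Bigl|\bigcup_{0<\eps<\eta}\mathcal{M}^\eps(t)\cap K\Bigr|.$$
Theorem~\ref{thm:mushy.disappear} gives the inclusion
$$\bigcup_{0<\eps<\eta}\mathcal{M}^\eps(t)\subset \Bigl(\bigcup_{0<\eps<\eta}\Dsupp(v^\eps(\cdot,t))\Bigr)+B_{\eta R_J},$$
so I need to control the union of supports on the right as $\eta\to 0^+$.

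The key step is a one-sided Hausdorff bound: for every $\sigma>0$ there is $\eta_0>0$ with
$$\bigcup_{0<\eps<\eta_0}\Dsupp(v^\eps(\cdot,t))\subset \Dsupp(v(\cdot,t))+B_\sigma.$$
This is the delicate point, because $\L^1_{\rm loc}$-convergence alone does not rule out small-mass pockets of $v^\eps$ far from $\Dsupp(v(\cdot,t))$. The argument would combine (i) retention (Proposition~\ref{prop:retention.l1}) with the assumption $\Dsupp(f)=\Dsupp((f-1)_+)$, ensuring $\Dsupp(v^\eps(\cdot,0))=\Dsupp(f)\subset\Dsupp(v(\cdot,t))$; (ii) a quantitative version of the finite-speed-of-propagation argument of Theorem~\ref{thm:free.bndry} for the rescaled kernel $J_\eps$; and (iii) continuity of $v$ (\cite{CF}), so that points at positive distance from $\Dsupp(v(\cdot,t))$ lie in balls on which $v\equiv 0$ and hence on which $v^\eps$ has $\L^1$-vanishing mass.

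Granted this, the right-hand set in the second display is, modulo a null set, contained in the collar $\bigl(\Dsupp(v(\cdot,t))+B_{\sigma+\eta R_J}\bigr)\setminus\Dsupp(v(\cdot,t))$, where the exclusion uses the disjointness of $\mathcal{M}^\eps(t)$ from $\Dsupp(v^\eps(\cdot,t))$ together with the identification $\Dsupp(u(\cdot,t))=\Dsupp(v(\cdot,t))$ for the local solution (no-mushy property of the limit). Letting $\eta\to 0$ and using continuity of measure on the nested closed neighborhoods of the closed set $\Dsupp(v(\cdot,t))$ (intersected with $K$), the measure tends to $|\partial\Dsupp(v(\cdot,t))\cap K|$. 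Finally, $\partial\Dsupp(v(\cdot,t))=\partial\{v(\cdot,t)>0\}$ is the free boundary of the local one-phase Stefan problem, which has zero $N$-dimensional Lebesgue measure (since $v$ is continuous and the no-mushy initial condition produces a sharp interface between $\{u=0\}$ and $\{u\ge 1\}$). The main obstacle, the one-sided Hausdorff bound above, is the core technical content; everything else is measure-theoretic bookkeeping.
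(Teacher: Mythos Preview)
Your approach via a one-sided Hausdorff bound on the supports $\Dsupp(v^\eps(\cdot,t))$ is genuinely different from the paper's, but the step you yourself flag as the ``core technical content'' is not established, and the route you sketch for it does not work. In the finite-speed-of-propagation argument (Theorem~\ref{thm:free.bndry}) the waiting time is $t_0=(\|J\|_{\L^\infty}\|f\|_{\L^1})^{-1}$; for the rescaled problem~\eqref{eq:stefan.eps} the analogous constant is $c_0^\eps=\eps^{-2}\|J_\eps\|_{\L^\infty}\|f\|_{\L^1}=\eps^{-N-2}\|J\|_{\L^\infty}\|f\|_{\L^1}$, so the waiting time is $t_0^\eps\sim\eps^{N+2}$. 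Iterating over a fixed time $t$ therefore allows the support of $v^\eps$ to expand by roughly $(t/t_0^\eps)\cdot\eps R_J\sim t\,\eps^{-N-1}$, which diverges as $\eps\to0$. Thus ingredient~(ii) yields no uniform control, and ingredients~(i) and~(iii) by themselves cannot rule out thin pockets of $\{v^\eps>0\}$ far from $\Dsupp(v(\cdot,t))$, since vanishing $\L^1$-mass says nothing about supports. Even granting the Hausdorff bound, your collar exclusion of $\Dsupp(v(\cdot,t))$ from $\bigcup_{\eps<\eta}\mathcal M^\eps(t)$ still needs an argument: it amounts to showing that each $x$ with $v(x,t)>0$ eventually lies in $\Dsupp(v^\eps(\cdot,t))$, which is again a pointwise statement with threshold depending on $x$, not a uniform setwise one.

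The paper bypasses all of this by arguing pointwise rather than setwise. From the $\L^1_{\rm loc}$-convergence one has a.e.\ pointwise convergence $u^\eps(x,t)\to u(x,t)$ outside a null set $F$. For each such $x$ the proof splits into three cases: if $v(x,t)>0$ then $u^\eps(x,t)>1$ for small $\eps$, so $x\notin\mathcal M^\eps(t)$; if $x$ lies in the interior of $\{v(\cdot,t)=0\}$ then $u(x,t)=0$ (no mushy region for the local limit), hence $u^\eps(x,t)<1$ for small $\eps$ and, via Theorem~\ref{thm:mushy.disappear}, $x\notin\mathcal M^\eps(t)$; the remaining points lie on $\partial\{v(\cdot,t)>0\}$, which is a null set by~\cite{CR}. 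This gives $\mathcal M^*(t)\subset F\cup\partial\{v(\cdot,t)>0\}$ directly, with no need for any uniform control of $\Dsupp(v^\eps(\cdot,t))$.
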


\begin{proof}
       We know that $u^\eps(\cdot,t)$ converges pointwise to $u(\cdot,t)$ except on a set $F$ which has zero $N$-dimensional Lebesgue measure.
    Apart from the set $F$ there are three possibilities:

    \noindent(i) $x\in\partial\{v(t)>0\}$. This set is known to have zero $N$-dimensional Lebesgue measure \cite{CR}.

    \noindent(ii) $x\in\{v(t)=0\}\setminus\partial\{v(t)>0\}$.
    Then since there are no mushy regions for the limit (local) equation here,
    necessarily, $u(x,t)=0$. This implies that for $\eps$ small enough (say less than $\eps_0$),
    we have $u^\eps(x,t)<1$, thus $v^\eps(x,t)=0$. Hence $x$ cannot belong to $\mathcal{M}^*(t)$
    because $x$ belongs to no mushy region for $\eps<\eps_0$, see Theorem~\ref{thm:mushy.disappear}.

    \noindent(iii) $x\in\{v(t)>0\}$. Then for $\eps$ small enough,
    $u^\eps(x,t)>1$ because it converges to $u(x,t)>1$. Thus, for $\eps$ small enough, such an $x$
    does not belong to any mushy region, hence it is not in the limsup

    Thus we have proved that $\mathcal{M}^*(t)$ is included in $F\cup\partial\{v(t)>0\}$ which is
    a negligible set.
\end{proof}

%%%%%%%%%%%%%%%%%%%%%%%%%%%%%%%%%%%%%%%%%%%%%%%%%%%%%%%%%%%%%%%%%%%%%%%%%%%%%%%%%
%%%%%%%%%%%%%%%%%%%%%%%%%%%%%%%%%%%%%%%%%%%%%%%%%%%%%%%%%%%%%%%%%%%%%%%%%%%%%%%%%
%%%%%%%%%%%%%%%%%%%%%%%%%%%%%%%%%%%%%%%%%%%%%%%%%%%%%%%%%%%%%%%%%%%%%%%%%%%%%%%%%
\section{Asymptotic behavior}
\label{sect:asymptotic} \setcounter{equation}{0}

Our next aim is to describe the large time behavior of the
solutions to our model.  For the local Stefan problem it is given by
a \lq mesa'-type problem \cite{GilQuirosVazquez2010}. To be more
precise, $u$ converges to $\tilde f=f+\Delta w$, where $w$ solves
the elliptic obstacle-type problem
$$
w\ge 0,\quad 0\le f+\Delta w \le 1,  \quad (f+\Delta w-1)w=0.
$$
In our case the limit is also given by a \lq mesa', but now of a
\emph{non-local} character, see below. This is to be contrasted with
the large time behavior of the non-local heat equation in the whole
space, which is given by the solution of the \emph{local} heat
equation with the same data, and hence by a multiple of the
fundamental solution of the latter equation.

\subsection{Formulation of the Stefan problem as a parabolic non-local obstacle problem (in complementarity form)}

We consider here a nonnegative initial $u$ giving rise to a nonnegative solution $u$.
To identify the asymptotic limit for $u$, we define the {\em
Baiocchi variable}
$$
w  (t)=\int_0^t v (s)\ds.
$$
A variable of this kind was first used by Baiocchi in 1971 to deal
with the dam problem \cite{Ba1}, \cite{Ba2}. The enthalpy and the
temperature can be recovered from $w$ through the formulas
\begin{equation}
\label{u.v.Stefan} u=f+J*w-w,\qquad v=\partial_tw,
\end{equation}
where the time derivative has to be understood in the sense of
distributions. Moreover,
$$
0\le u-v\le1,\quad (u-1-v)v=0\quad\text{a.e.}
$$

The distributional supports of $v$ and $w$ coincide for all times.
\begin{lemma}For any $t>0$, we have
$\Dsupp(v(t))=\Dsupp(w(t))$.
\end{lemma}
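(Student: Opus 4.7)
My plan is to prove the two inclusions separately, relying on two tools already established: the retention property for $v$ (Proposition~\ref{prop:retention.l1}), i.e.~$\Dsupp(v(s))\subset\Dsupp(v(t))$ for $0\le s\le t$, and the fact that $v\in\C([0,\infty);\L^1(\mathbb{R}^N))$, which implies that for any $\varphi\in\C_{\mathrm c}^\infty(\mathbb{R}^N)$ the function $s\mapsto\int_{\mathbb{R}^N}v(s)\varphi$ is continuous on $[0,t]$.

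\emph{First inclusion} $\Dsupp(w(t))\subset\Dsupp(v(t))$. Pick any nonnegative $\varphi\in\C_{\mathrm c}^\infty\bigl((\Dsupp(v(t)))^{c}\bigr)$. By retention, for every $s\in[0,t]$ we have $\supp\varphi\cap\Dsupp(v(s))=\emptyset$, hence $\int_{\mathbb{R}^N}v(s)\varphi=0$. Fubini's theorem then gives
$$
\int_{\mathbb{R}^N}w(t)\varphi=\int_0^t\!\!\int_{\mathbb{R}^N}v(s)\varphi\,\mathrm{d}s=0,
$$
so $\supp\varphi$ does not intersect $\Dsupp(w(t))$, proving this inclusion.

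\emph{Second inclusion} $\Dsupp(v(t))\subset\Dsupp(w(t))$. Let $x\in\Dsupp(v(t))$ and let $\varphi\in\C_{\mathrm c}^\infty(\mathbb{R}^N)$ be nonnegative with $\varphi(x)>0$. By definition of the distributional support, $\int_{\mathbb{R}^N}v(t)\varphi>0$. Since the map $s\mapsto\int_{\mathbb{R}^N}v(s)\varphi$ is continuous on $[0,t]$, it is strictly positive on some interval $(t-\delta,t]$ with $\delta>0$, and thus
$$
\int_{\mathbb{R}^N}w(t)\varphi=\int_0^t\!\!\int_{\mathbb{R}^N}v(s)\varphi\,\mathrm{d}s>0.
$$
Hence every such $\varphi$ tests positively against $w(t)$, which by the characterisation of $\Dsupp$ for nonnegative functions means $x\in\Dsupp(w(t))$.

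There is no genuine obstacle here: the argument is a direct combination of the distributional-support framework introduced at the start of Section~3 with retention and the $\L^1$-time-continuity of $v$. The only point requiring a line of care is keeping the two inclusions symmetric but distinct: the first uses that the support of $v$ grows in $s$, the second uses continuity in $s$ to propagate positivity from the endpoint $s=t$ to a small interval of positive measure.
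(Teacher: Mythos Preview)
Your proof is correct and follows essentially the same route as the paper's: retention for $v$ handles one inclusion, and the $\L^1$-time-continuity of $v$ handles the other. The only cosmetic difference is that for the inclusion $\Dsupp(w(t))\subset\Dsupp(v(t))$ you argue contrapositively (test functions supported outside $\Dsupp(v(t))$ annihilate $w(t)$), while the paper argues directly (a point in $\Dsupp(w(t))$ lies in some $\Dsupp(v(s))$, hence in $\Dsupp(v(t))$); these are equivalent formulations of the same idea.
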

\begin{proof}
Let $x\in\Dsupp(w(t))$. Then, given $\varphi\in
\C_\textrm{c}^\infty(\mathbb{R}^N)$, $\varphi\ge 0$, $\varphi(x)>0$, we have
$$
0<\int_{\mathbb{R}^N}
w(t)\varphi=\int_0^t\left(\int_{\mathbb{R}^N}
v(s)\varphi\right)\ds.
$$
Hence, there exists $s<t$ such that $\int_{\mathbb{R}^N} v(s)\varphi>0$, i.e.,
 $x\in\Dsupp(v(s))$. Using the retention property for $v$ we finally get that $x\in\Dsupp(v(t))$.

Conversely, assume that $x\in\Dsupp(v(t))$. Then, given
$\varphi\in \C_\textrm{c}^\infty(\mathbb{R}^N)$, $\varphi\ge 0$, $\varphi(x)>0$, we have
$\int_{\mathbb{R}^N}  v(t)\varphi>0$. Since $v\in
\C([0,\infty);\L^1(\mathbb{R}^N))$, we  have that there exists a
value $\delta>0$ such that $\int_{\mathbb{R}^N}
v(s)\varphi>0$ for all $s\in(t-\delta,t)$. This implies that
$\int_{\mathbb{R}^N}  w(t)\varphi>0$, hence
$x\in\Dsupp(w(t))$.
\end{proof}
Thanks to this lemma, $(u-1-v)w=0$ a.e. Hence $w$ solves a.e.~the
complementarity problem
\begin{equation}
\label{variational.form.of.Stefan}  w \ge 0, \quad  0\le f+J*w-w
-\partial_tw \le 1,\quad (f+J*w-w -1-\partial_tw)w  =0,
\end{equation}
plus the initial condition $w(0)=0$. An analogous formulation for
the local Stefan problem was given in \cite{D}, see also \cite{FK}.

\subsection{A non-local elliptic obstacle problem}

If $\int_0^\infty\|v(t)\|_{\L^1(\mathbb{R}^N)}\dt<\infty$, then
$w(t)$  converges monotonically and in $\L^1(\mathbb{R}^N)$  as
$t\to\infty$ to
$$
w_\infty=\int_0^{\infty} v(s)\ds\in \L^1(\mathbb{R}^N).
$$
Thus, see~\eqref{u.v.Stefan}, $u(\cdot,t)$ converges point-wisely
and in $\L^1(\mathbb{R}^N)$ to
$$
\tilde f=f+J*w_\infty-w_\infty.
$$
Passing to the limit as $t\to\infty$
in~\eqref{variational.form.of.Stefan},
we get that $w_\infty$ is a solution with data $f$  to the \emph{nonlocal obstacle problem}:
\begin{equation} \label{forma.variacional.de.HS} \tag{OP}
\left\{\begin{array}{l}
\mbox{Given a non-negative data $f\in\L^1(\mathbb{R}^N)$, find a
non-negative
}\\
\mbox{function $w\in\L^1(\mathbb{R}^N)$ such that }\\[8pt]
 0\le f+J*w-w\le 1, \quad
\quad (f+J*w-w-1)w=0\quad \text{ a.e.}
\end{array}\right.
\end{equation}
This non-local obstacle problem has a unique solution. The proof is
based on the following Liouville type lemma for $J$-subharmonic
functions.
\begin{lemma}
\label{lemma:Liouville}Let $w\in\L^1(\mathbb{R}^N)$ such that
$w\ge0$, $w\le J*w$ a.e. Then $w=0$ a.e.
\end{lemma}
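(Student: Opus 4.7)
The plan is to first upgrade the inequality $w\le J\ast w$ to an equality, then exploit a strong maximum principle. Integrating $w\le J\ast w$ over $\mathbb{R}^N$, using Fubini and $\int_{\mathbb{R}^N}J=1$, gives
$$
\int_{\mathbb{R}^N}w\;\le\;\int_{\mathbb{R}^N}J\ast w\;=\;\int_{\mathbb{R}^N}w,
$$
so $J\ast w-w\ge 0$ has zero integral and therefore $w=J\ast w$ a.e.

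Next I would replace $w$ by a continuous representative: since $J\in \mathrm{C}_{\mathrm{c}}(\mathbb{R}^N)$ and $w\in \L^1(\mathbb{R}^N)$, the function $J\ast w$ is continuous and bounded (by $\|J\|_{\L^\infty}\|w\|_{\L^1}$). Thus $w$ is continuous, bounded, non-negative, and integrable.

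Now I would run a strong maximum principle argument. Set $M=\sup_{\mathbb{R}^N}w$; if $M=0$ we are done, so assume $M>0$. The first step is to show $M$ is attained. Pick $x_n$ with $w(x_n)\to M$. If $\{x_n\}$ has a bounded subsequence, continuity gives a point $x_0$ where $w(x_0)=M$. Otherwise $|x_n|\to\infty$, but then
$$
w(x_n)=\int_{B_{R_J}(x_n)}J(x_n-y)w(y)\dy\;\le\;\|J\|_{\L^\infty(\mathbb{R}^N)}\int_{B_{R_J}(x_n)}w\;\longrightarrow\;0,
$$
because $w\in\L^1(\mathbb{R}^N)$, contradicting $w(x_n)\to M>0$. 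Hence $M=w(x_0)$ at some $x_0$. Then
$$
M=\int J(x_0-y)w(y)\dy\le M\int J=M,
$$
with equality forcing $w\equiv M$ on $x_0+\supp(J)=\overline{B_{R_J}(x_0)}$ (a.e., and hence everywhere by continuity). Therefore the level set $A=\{w=M\}$ is open; being also closed by continuity, and $\mathbb{R}^N$ connected, we conclude $A=\mathbb{R}^N$, i.e.\ $w\equiv M$. This contradicts $w\in\L^1(\mathbb{R}^N)$ unless $M=0$.

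The only delicate point is the attainment of the supremum, which is where the $\L^1$ hypothesis on $w$ enters crucially (a priori, a continuous bounded $\L^1$ function need not reach its sup, but the fixed-point relation $w=J\ast w$ together with $w\in\L^1$ rules out sequences escaping to infinity). Once attainment is secured, the open/closed propagation via $\supp(J)$ having nonempty interior, combined with connectedness of $\mathbb{R}^N$, closes the argument.
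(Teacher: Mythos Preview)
Your argument is correct and follows the same core strategy as the paper --- a strong maximum principle combined with integrability --- but you take one genuinely cleaner shortcut. By first integrating the inequality to promote $w\le J\ast w$ to an equality $w=J\ast w$, you obtain continuity (and boundedness) of $w$ for free, since $J\ast w$ is automatically continuous for $J\in\C_{\rm c}(\R^N)$ and $w\in\L^1(\R^N)$. The paper instead keeps the inequality, proves the result first for continuous $w$, and then reduces the general case to this via mollification $w_n=w\ast\rho_n$. Your route avoids that extra approximation step.

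The remaining parts --- showing the supremum is attained using the tail estimate $w(x)\le\|J\|_{\L^\infty}\int_{B_{R_J}(x)}w\to0$ as $|x|\to\infty$, and then propagating the level set $\{w=M\}$ via the support of $J$ --- are essentially identical to the paper's argument, just phrased with sequences and an open/closed connectedness argument rather than the paper's $\eps$-level formulation.
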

\begin{proof}
Assume first that $w$ is continuous, and fix $\eps>0$. Since $w$ is
integrable, there is a radius $R$ such that
$$
    \int_{|x|\ge R}w\le \frac{\eps}{\|J\|_{\L^\infty(\mathbb{R}^N)}}.
$$
Hence, for $|x|\ge R+R_J$
\begin{equation}
\label{eq:estimate.w.at.infinity}
    w(x)\le (J*w)(x)\le \|J\|_{\L^\infty(\mathbb{R}^N)}\int_{B_{R_J}(x)}\!\!\!w\le \|J\|_{\L^\infty(\mathbb{R}^N)}\int_{|x|\ge R}w \leq\eps.
\end{equation}
So, let us assume that for some $x\in\R^N$, $w(x)>\varepsilon$. Then
the maximum of $w$ is attained at some point $\bar x\in B_{R+R_J}$
and
$$
\max_{\R^N} w=w(\bar x)>\eps.
$$
Using that $w\le J*w$, we first deduce that $w(x)=w(\bar x)$ in
$B_{R_J}(\bar x)$ and then, spreading this property to all the space
by adding each time the support of $J$, we conclude that $w=w(\bar
x)>\eps$ in all $\R^N$. But this is a contradiction
with~\eqref{eq:estimate.w.at.infinity}. So, we deduce that $0\le
w\le\varepsilon$ for any $\varepsilon>0$, hence $w\equiv0$.

If $w$ is not continuous, we consider $w_n=w*\rho_n$, where $\rho_n$
is an approximation of the identity. The continuous function $w_n$
satisfies all the hypotheses of the lemma, hence $w_n=0$. Letting
$n\to\infty$ we obtain $w=0$ a.e.
\end{proof}

We will also need the following non-local version of Kato's
inequality (see \cite{Ka} for the local inequality),
\begin{equation}\label{eq:kato}
(J*w-w)\mathds{1}_{\{w>0\}}\le J*w_+-w_+\quad\text{a.e.},
\end{equation}
which is trivially valid for any function in $\L^1_{\rm loc}(\mathbb{R}^N)$.
\begin{theorem}
\label{thm:uniqueness.non-local.mesa} Problem
\eqref{forma.variacional.de.HS} has at most one solution.
\end{theorem}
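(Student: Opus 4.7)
The plan is to show uniqueness by a standard ``doubling variables'' style argument tailored to the obstacle structure, combining the non-local Kato inequality \eqref{eq:kato} with the Liouville-type Lemma~\ref{lemma:Liouville}. Let $w_1,w_2$ be two solutions of \eqref{forma.variacional.de.HS} with the same data $f$, and set $W:=w_1-w_2\in\L^1(\R^N)$. The goal is to prove that $W_+=0$ a.e.; by symmetry this will give $w_1=w_2$.

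First I would extract from the obstacle structure the key pointwise inequality
\begin{equation*}
(J\ast W-W)\,\mathds{1}_{\{W>0\}}\ge 0\qquad\text{a.e.}
\end{equation*}
Indeed, on the set $\{W>0\}=\{w_1>w_2\}$ we have in particular $w_1>0$, so the complementarity condition for $w_1$ forces $f+J\ast w_1-w_1=1$, while the upper obstacle condition for $w_2$ gives $f+J\ast w_2-w_2\le 1$. Subtracting yields $J\ast W-W\ge 0$ a.e.\ on $\{W>0\}$, as claimed. Off this set, the left-hand side is just $0$, so the inequality holds a.e.

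Next I would apply the non-local Kato inequality \eqref{eq:kato} to $W$, which is valid since $W\in \L^1_{\rm loc}(\R^N)$:
\begin{equation*}
(J\ast W-W)\,\mathds{1}_{\{W>0\}}\le J\ast W_+-W_+\qquad\text{a.e.}
\end{equation*}
Combining the two displays gives $J\ast W_+-W_+\ge 0$ a.e., i.e., $W_+\le J\ast W_+$ a.e. Since $W_+\in\L^1(\R^N)$ and $W_+\ge 0$, Lemma~\ref{lemma:Liouville} yields $W_+\equiv 0$, that is, $w_1\le w_2$ a.e. Interchanging the roles of $w_1$ and $w_2$ gives the reverse inequality, completing the proof.

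The only subtle point is the first step: one must be careful that ``$w_1>0$'' is the correct set on which to apply the complementarity condition, and that combining the equality for $w_1$ with the inequality for $w_2$ yields a one-sided, not two-sided, comparison. The rest is a direct and essentially algebraic application of the Kato inequality (already stated in the text) and the Liouville lemma (already proved), so I do not expect any analytical obstacle beyond those tools.
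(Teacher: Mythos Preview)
Your proof is correct and is essentially the same as the paper's: both derive $(J\ast W - W)\mathds{1}_{\{w_1>w_2\}}\ge 0$ from the complementarity structure (the paper phrases this via $\tilde f_i\in\beta(w_i)$ for a monotone graph $\beta$, which encodes exactly your observation that $w_1>w_2\ge 0\Rightarrow \tilde f_1=1\ge \tilde f_2$), then apply Kato's inequality \eqref{eq:kato} and the Liouville Lemma~\ref{lemma:Liouville} to conclude.
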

\begin{proof} The key point is that solutions to
Problem~\eqref{forma.variacional.de.HS} satisfy
$$
  \tilde f=f+J*w-w, \quad \tilde f\in \beta(w) \quad \text{a.e.},
$$
where $\beta$ is the sign graph, see~\cite{BenilanBoccardoHerrero}
for the local case.

Let $w_i$, $i=1,2$, be two solutions
to~\eqref{forma.variacional.de.HS} with initial data $f$, and
$\tilde f_i$ be the corresponding projections.  Since $\tilde f_i\in
\beta(w_i)$, we have
$$
0\le(\tilde f_1-\tilde
f_2)\mathds{1}_{\{w_1>w_2\}}
=\big(J*(w_1-w_2)-(w_1-w_2)\big)\mathds{1}_{\{w_1>w_2\}}\quad\text{a.e.},
$$
from where we get, using Kato's inequality~\eqref{eq:kato}, that
$$
(w_1-w_2)_+\le J*(w_1-w_2)_+.
$$
Therefore, $(w_1-w_2)_+$ satisfies the hypotheses of
Lemma~\ref{lemma:Liouville}. We conclude that $w_1\le w_2$.
Interchanging the roles of $w_1$ and $w_2$, we get the result.
\end{proof}

\subsection{The mesa problem}
At this point we have a precise characterization  of the large time
behavior of solutions to the non-local Stefan
problem~\eqref{eq:stefan.nolocal} whenever~$\int_0^\infty
\|v(t)\|_{\L^1(\mathbb{R}^N)}\dt$ is finite. This is the
case, for instance, under the hypotheses of Lemma~{\rm
\ref{lem:cont.supp.v}}, see Corollary~\ref{corollary:exponential.decay}, or for general
$f\in\L^1_+(\R^N)$ if $N\ge3$, see Corollary~\ref{cor:decay.rate.integrable}.
\begin{theorem}
\label{thm:convergence.t.infty} Let $f\in\L^1_+(\R^N)$, and
assume in addition, if $N=1,2$,  the hypotheses of Lemma~{\rm
\ref{lem:cont.supp.v}} . If $u$ is the solution to
problem~\eqref{eq:stefan.nolocal} and $w$ is the solution of problem~\eqref{forma.variacional.de.HS}, then $u(t)\to f+J*w-w$ in
$\L^1(\mathbb{R}^N)$ as $t\to\infty$.
\end{theorem}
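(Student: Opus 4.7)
The plan is to exploit the Baiocchi variable machinery of this section. The representation
$$
u(t)=f+J\ast w(t)-w(t)
$$
from~\eqref{u.v.Stefan} reduces the statement to showing that the Baiocchi variable $w(t)$ converges in $\L^1(\R^N)$ as $t\to\infty$ to the unique solution of~\eqref{forma.variacional.de.HS}.

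The first step is to verify that $\int_0^\infty\|v(s)\|_{\L^1(\R^N)}\ds<\infty$ under the stated hypotheses. For $N\ge 3$ this is immediate from Corollary~\ref{cor:decay.rate.integrable}, since $\|v(s)\|_{\L^1(\R^N)}=O(s^{-N/2})$ with $N/2>1$. For $N=1,2$ the additional hypotheses of Lemma~\ref{lem:cont.supp.v} give access to the exponential decay provided by Corollary~\ref{corollary:exponential.decay}. Since $v\ge 0$, $w(t)$ is nondecreasing in $t$ and pointwise monotone convergence to
$$
w_\infty:=\int_0^\infty v(s)\ds\in\L^1_+(\R^N)
$$
holds, with quantitative control
$$
\|w_\infty-w(t)\|_{\L^1(\R^N)}=\int_t^\infty\|v(s)\|_{\L^1(\R^N)}\ds\longrightarrow 0.
$$
Young's inequality then yields $\|J\ast w(t)-J\ast w_\infty\|_{\L^1(\R^N)}\le\|J\|_{\L^1(\R^N)}\|w(t)-w_\infty\|_{\L^1(\R^N)}\to 0$, so $u(t)\to f+J\ast w_\infty-w_\infty$ in $\L^1(\R^N)$.

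It only remains to identify $w_\infty$ with the solution $w$ of~\eqref{forma.variacional.de.HS}. Along a subsequence $w(t)\to w_\infty$ a.e., and the same quantitative decay gives $\|v(t)\|_{\L^1(\R^N)}\to 0$, hence $v(t)\to 0$ a.e.\ along a further subsequence. Passing to the limit in the complementarity system~\eqref{variational.form.of.Stefan} we obtain
$$
w_\infty\ge 0,\qquad 0\le f+J\ast w_\infty-w_\infty\le 1,\qquad (f+J\ast w_\infty-w_\infty-1)w_\infty=0\quad\text{a.e.},
$$
so $w_\infty$ solves~\eqref{forma.variacional.de.HS}. The uniqueness statement Theorem~\ref{thm:uniqueness.non-local.mesa} forces $w_\infty=w$, and the theorem follows.

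The main subtlety I anticipate is the passage to the limit in the complementarity relation~\eqref{variational.form.of.Stefan}. This relies on the fact that each factor converges almost everywhere along a subsequence and is appropriately bounded: the bracket $f+J\ast w-w-\partial_tw$ takes values in $[0,1]$ and $0\le w(t)\le w_\infty\in\L^1(\R^N)$. The product therefore converges almost everywhere to the product of the limits, and the pointwise inequalities are preserved. If a direct pointwise argument proves fragile one can instead test~\eqref{variational.form.of.Stefan} against nonnegative $\varphi\in\C_\mathrm{c}^\infty(\R^N)$, transfer the time derivative via the integral form $w(t)=\int_0^t v(s)\ds$, and close the argument using the established $\L^1$-convergences.
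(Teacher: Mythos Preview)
Your argument is correct and mirrors the paper's approach essentially line for line: the paper establishes the same integrability of $\|v(t)\|_{\L^1}$ via Corollaries~\ref{corollary:exponential.decay} and~\ref{cor:decay.rate.integrable}, uses the monotone $\L^1$-convergence of $w(t)$ to $w_\infty$, deduces the convergence of $u(t)$ from~\eqref{u.v.Stefan}, passes to the limit in~\eqref{variational.form.of.Stefan}, and concludes by the uniqueness Theorem~\ref{thm:uniqueness.non-local.mesa}. One minor remark: since $w(t)$ is monotone in $t$, the a.e.\ convergence $w(t)\to w_\infty$ holds for the full family, not merely along a subsequence, which simplifies your discussion of the passage to the limit in the complementarity relation.
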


The map $\mathcal{P}: f\mapsto \tilde f=f+J*w-w$ projects the data
$f$ onto a \lq mesa'-type profile: $0\le \tilde f\le 1$, $\tilde
f=1$ in the \emph{non-coincidence set} $\{w>0\}$. Notice that
$\|\tilde f\|_{\L^1(\mathbb{R}^N)}=\| f\|_{\L^1(\mathbb{R}^N)}$.
However, in contrast with the local problem, the projection $\tilde
f$ is not necessarily equal to $f$ in the coincidence set $\{w=0\}$.

Up to now we have been able to prove the existence of a solution of
\eqref{forma.variacional.de.HS} for any $f\in\L^1_+(\mathbb{R}^N)$ only if $N\ge3$.
For low dimensions, $N=1,2$, we have needed to add the
hypotheses of Lemma~{\rm \ref{lem:cont.supp.v}}.  Hence, for low dimensions
the projection operator $\mathcal{P}$ is in principle only defined
under these extra assumptions. However,  $\mathcal{P}$ is continuous, in the
$\L^1$-norm, in the subset of~$\L^1_+(\mathbb{R}^N)$ of functions satisfying the hypotheses of
Theorem~\ref{thm:convergence.t.infty}.
\begin{corollary}
\label{corollary:contraction.mesa} Let
$f_i$, $i=1,2$, satisfying the hypotheses of Theorem~{\rm \ref{thm:convergence.t.infty}}. Then
$$
\|\tilde f_1-\tilde f_2\|_{\L^1(\mathbb{R}^N)}\le
\|f_1-f_2\|_{\L^1(\mathbb{R}^N)}.
$$
\end{corollary}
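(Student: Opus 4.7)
The plan is to deduce this $L^1$-contraction for $\mathcal P$ directly from the $L^1$-contraction for the parabolic flow (Corollary~\ref{cor:contraction.stefan.noloc}) by passing to the limit as $t\to\infty$.

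More precisely, let $u_i$ denote the $L^1$-solution to~\eqref{eq:stefan.nolocal} with initial data $f_i$, for $i=1,2$. By Corollary~\ref{cor:contraction.stefan.noloc}, for every $t\geq 0$,
\begin{equation*}
\int_{\mathbb{R}^N}(u_1-u_2)_+(t)\leq\int_{\mathbb{R}^N}(f_1-f_2)_+.
\end{equation*}
Since both $f_1$ and $f_2$ satisfy the hypotheses of Theorem~\ref{thm:convergence.t.infty}, we have $u_i(t)\to \tilde f_i=f_i+J*w_i-w_i$ in $L^1(\mathbb R^N)$ as $t\to\infty$. In particular, $(u_1-u_2)_+(t)\to(\tilde f_1-\tilde f_2)_+$ in $L^1(\mathbb{R}^N)$ (since taking positive parts is $L^1$-continuous). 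Passing to the limit in the inequality above yields
\begin{equation*}
\int_{\mathbb{R}^N}(\tilde f_1-\tilde f_2)_+\leq\int_{\mathbb{R}^N}(f_1-f_2)_+.
\end{equation*}

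Exchanging the roles of $f_1$ and $f_2$ gives the symmetric inequality for $(\tilde f_2-\tilde f_1)_+$, and adding the two yields the desired bound $\|\tilde f_1-\tilde f_2\|_{L^1(\mathbb{R}^N)}\leq \|f_1-f_2\|_{L^1(\mathbb{R}^N)}$.

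There is no serious obstacle here: the only point to check is that the hypothesis of Theorem~\ref{thm:convergence.t.infty} really is available on both sides (so that one may use the $L^1$-convergence $u_i(t)\to\tilde f_i$), which is assumed in the statement. If one preferred to avoid the $L^1$-convergence and only use the a.e. pointwise limit, Fatou's lemma would suffice to pass to the limit in the inequality and the proof would go through unchanged.
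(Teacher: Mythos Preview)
Your proof is correct and follows essentially the same approach as the paper: both obtain the contraction for $\mathcal P$ by passing to the limit $t\to\infty$ in the $\L^1$-contraction for the parabolic flow, using the convergence $u_i(t)\to\tilde f_i$ in $\L^1(\mathbb{R}^N)$ provided by Theorem~\ref{thm:convergence.t.infty}. Your version is slightly more explicit (working with the positive parts and then symmetrizing), but there is no substantive difference.
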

\begin{proof}
Since \eqref{forma.variacional.de.HS} has uniqueness, any solution
with initial data  satisfying the hypotheses of Theorem~\ref{thm:convergence.t.infty} can
be obtained as the limit as $t\to\infty$ of the solution with the same
initial data of the non-local Stefan problem. Hence the result
follows just passing to the limit as $t\to\infty$  in the
contraction property for this latter problem.
\end{proof}
Since the class of functions satisfying the hypotheses of Theorem~\ref{thm:convergence.t.infty} is dense in $\L^1_+(\mathbb{R}^N)$, we can extend $\mathcal{P}$ by continuity to the whole of
this bigger space. Thus, for any $f\in\L^1_+(\mathbb{R}^N)$, $\mathcal{P}f$
is the limit in $\L^1$ of $\{\mathcal{P}f_n\}$, where $\{f_n\}$ is
any sequence of nonnegative, measurable, bounded, and compactly supported
functions approximating $f$ in $\L^1(\mathbb{R}^N)$.

Let us notice that, though for any sequence of functions $\{f_n\}$
converging to $f$ in $\L^1(\mathbb{R}^N)$ we have convergence of
$\{\mathcal{P}f_n\}$, we are not able to prove the convergence of
the corresponding solutions $\{w_n\}$ to a solution
of~\eqref{forma.variacional.de.HS}, except under the hypotheses of Theorem~\ref{thm:convergence.t.infty}.
The main obstacle to prove this convergence is the lack of compactness of
the inverse of $\mathcal{L}$ (recall $\mathcal{L}=J\ast v-v$).

\subsection{Asymptotic limit for general data}
A simple argument now leads to the following characterization of the
asymptotic limit of the non-local Stefan problem for general
integrable initial data.

\begin{theorem}
Let $f\in \L^1_+(\mathbb{R}^N)$, and $u$ the corresponding solution to
problem~\eqref{eq:stefan.nolocal}.  Let $\mathcal{P}f$ be the
projection of $f$ onto a non-local mesa. Then $u(\cdot,t)\to
\mathcal{P}f$ in $\L^1(\mathbb{R}^N)$ as $t\to\infty$.
\end{theorem}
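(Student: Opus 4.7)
The plan is to reduce to Theorem~\ref{thm:convergence.t.infty} by a density plus triangle-inequality argument, using the two different $\L^1$-contraction properties already established: one for the evolution problem (Corollary~\ref{cor:contraction.stefan.noloc}) and one for the projection (Corollary~\ref{corollary:contraction.mesa}, together with its continuous extension).

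First I would fix $f\in\L^1_+(\R^N)$ and choose a sequence $\{f_n\}\subset\mathcal{S}$ of non-negative functions satisfying the hypotheses of Theorem~\ref{thm:convergence.t.infty} (for example, non-negative, bounded, compactly supported data dominated by a radial strictly decreasing $\C_0$-envelope) such that $\|f_n-f\|_{\L^1(\R^N)}\to0$. Let $u_n$ denote the $\L^1$-solution with initial datum $f_n$. By Theorem~\ref{thm:convergence.t.infty}, for each $n$,
\[
u_n(\cdot,t)\longrightarrow \mathcal{P}f_n \quad\text{in }\L^1(\R^N) \text{ as }t\to\infty.
\]
By the definition of $\mathcal{P}$ as a continuous extension (Corollary~\ref{corollary:contraction.mesa}), we also have $\mathcal{P}f_n\to\mathcal{P}f$ in $\L^1(\R^N)$.

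Now I would run the standard three-$\varepsilon$ estimate. Given $\varepsilon>0$, use the $\L^1$-contraction of the non-local Stefan flow, Corollary~\ref{cor:contraction.stefan.noloc}, to choose $n$ large enough that
\[
\|u(\cdot,t)-u_n(\cdot,t)\|_{\L^1(\R^N)}\le \|f-f_n\|_{\L^1(\R^N)}<\varepsilon/3
\qquad\text{for every }t\ge0,
\]
and, simultaneously, $\|\mathcal{P}f-\mathcal{P}f_n\|_{\L^1(\R^N)}<\varepsilon/3$. With this $n$ fixed, pick $t_0$ so that $\|u_n(\cdot,t)-\mathcal{P}f_n\|_{\L^1(\R^N)}<\varepsilon/3$ for all $t\ge t_0$. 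The triangle inequality then yields $\|u(\cdot,t)-\mathcal{P}f\|_{\L^1(\R^N)}<\varepsilon$ for $t\ge t_0$, proving the claim.

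There is no real obstacle here once the two contraction properties are in place: the uniformity in $t$ of the Stefan contraction is what lets the approximation error be absorbed at a single $n$ independent of the time at which asymptotics is tested. The only point requiring mild care is the density of $\mathcal{S}$ in $\L^1_+(\R^N)$, which was already invoked when extending $\mathcal{P}$ by continuity; any approximation by non-negative, bounded, compactly supported functions lying below a suitable radial strictly decreasing $\C_0$-envelope suffices, since such data fall under the hypotheses of Lemma~\ref{lem:cont.supp.v} and hence under those of Theorem~\ref{thm:convergence.t.infty}.
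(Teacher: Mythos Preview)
Your proof is correct and follows essentially the same approach as the paper: approximate $f$ in $\L^1$ by data satisfying the hypotheses of Theorem~\ref{thm:convergence.t.infty}, split $\|u(t)-\mathcal{P}f\|_{\L^1}$ into three terms via the triangle inequality, and control them using the $\L^1$-contraction for the flow (Corollary~\ref{cor:contraction.stefan.noloc}), the known asymptotics for $u_n$, and the $\L^1$-continuity of $\mathcal{P}$ (Corollary~\ref{corollary:contraction.mesa}). The paper phrases the final step with a $\limsup$ rather than an explicit three-$\varepsilon$ argument, but the content is identical.
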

\begin{proof}
Given $f$, let $\{f_n\}\subset \L^1(\mathbb{R}^N)$ be a sequence of functions satisfying the
hypotheses of Theorem~\ref{thm:convergence.t.infty} which approximate $f$ in
$\L^1(\mathbb{R}^N)$. Let $u_n$ be the corresponding solutions to
the non-local Stefan problem. We have,
$$
\|u(t)-\mathcal{P}f\|_{\L^1(\mathbb{R}^N)}\le\|u(t)-u_n(t)\|_{\L^1(\mathbb{R}^N)}+\|u_n(t)-\mathcal{P}
f_n\|_{\L^1(\mathbb{R}^N)}+\|\mathcal{P} f_n-\mathcal{P}f\|_{\L^1(\mathbb{R}^N)}.
$$
Using the contraction property for the non-local Stefan problem, and
the large time behavior for bounded and compactly supported
initial data,
$$
\limsup_{t\to\infty}\|u(t)-\mathcal{P}f\|_{\L^1(\mathbb{R}^N)}\le\|f-f_n\|_{\L^1(\mathbb{R}^N)}+\|\mathcal{P}
f_n-\mathcal{P}f\|_{\L^1(\mathbb{R}^N)}.
$$
Letting $n\to\infty$ we get the result.
\end{proof}

%%%%%%%%%%%%%%%%%%%%%%%%%%%%%%%%%%%%%%%%%%%%%%%%%%%%%%%%%%%%%%%%%%%%%%%%%%%%%%%%%
%%%%%%%%%%%%%%%%%%%%%%%%%%%%%%%%%%%%%%%%%%%%%%%%%%%%%%%%%%%%%%%%%%%%%%%%%%%%%%%%%
\section{Numerical experiments}
In order to illustrate some of the previous results concerning solutions of~\eqref{eq:stefan.nolocal}, we show some numerical experiments.  We will take $f\in{\L^1_+(\mathbb{R}^N)}$ compactly supported, and $J(x)=0.75(1-x^2)_+$.
The space discretization  is implemented using the trapezoidal rule.
For the integration in time we have used an ODE integrator provided
by {\sc Matlab}$^\circledR$.

 \subsection{Creation of mushy regions}
In Figure~\ref{fig.mushy} we illustrate the creation of mushy
regions. As mentioned in the introduction, this phenomenon  is
absent in the local problem: if there are no mushy regions
initially, this is also true for any later time. On the contrary, in
our non-local model, regions with $u$ between $0$ and $1$ appear in
the neighborhood of the water region as time passes. This is one of
the main qualitative features of the model.  For this simulation we
have taken $f(x)=2\cdot\mathds{1}_{[-1,1]}$.

\begin{center}
  \begin{figure}
    [ht]
    \includegraphics[width=10cm]{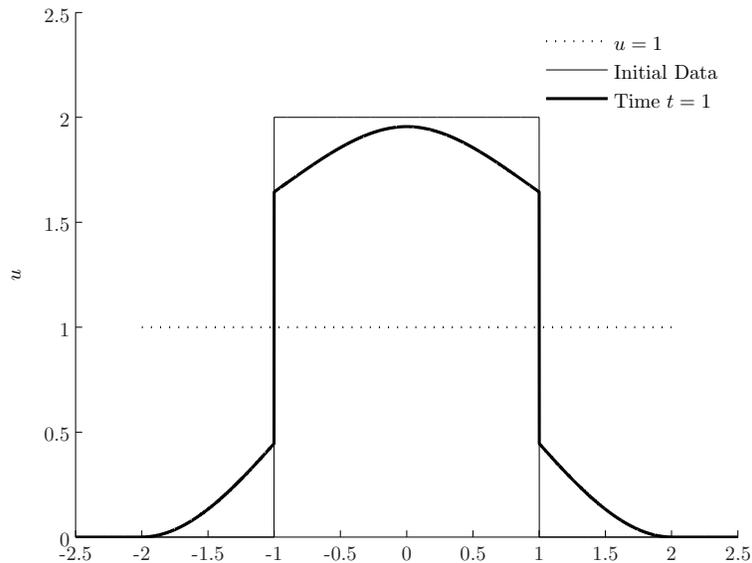}
    \caption{Creation of mushy regions.}
    \label{fig.mushy}
  \end{figure}
\end{center}

\subsection{Appearance of disconnected water regions} In the local
case, it never happens that a new water region appears disconnected
from the ones that were already present immediately before. On the
contrary, this indeed happens sometimes in our non-local model, as
shown in Section~\ref{subsect:disconnected}. We have exemplified
this fact in figures~\ref{fig.mushy2} and~\ref{fig.mushy2.zoom},
which correspond to an initial datum which is the sum of two
characteristic functions,
$f(x)=2.5\cdot\mathds{1}_{[-1.25,-0.5]}+0.99\cdot\mathds{1}_{[0.5,1]}$.
\begin{center}
  \begin{figure}
    [ht]
    \includegraphics[width=10cm]{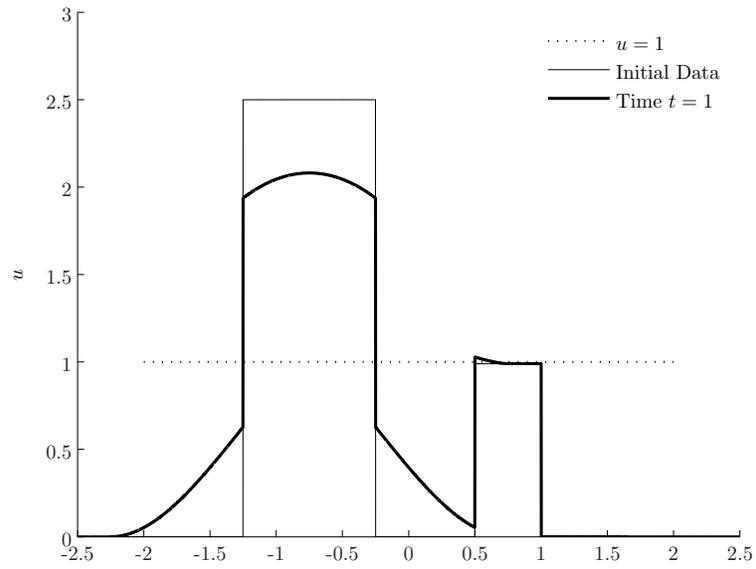}
    \caption{Appearance of disconnected water regions.}
    \label{fig.mushy2}
  \end{figure}
\end{center}
\begin{center}
  \begin{figure}
    [ht]
    \includegraphics[width=10cm]{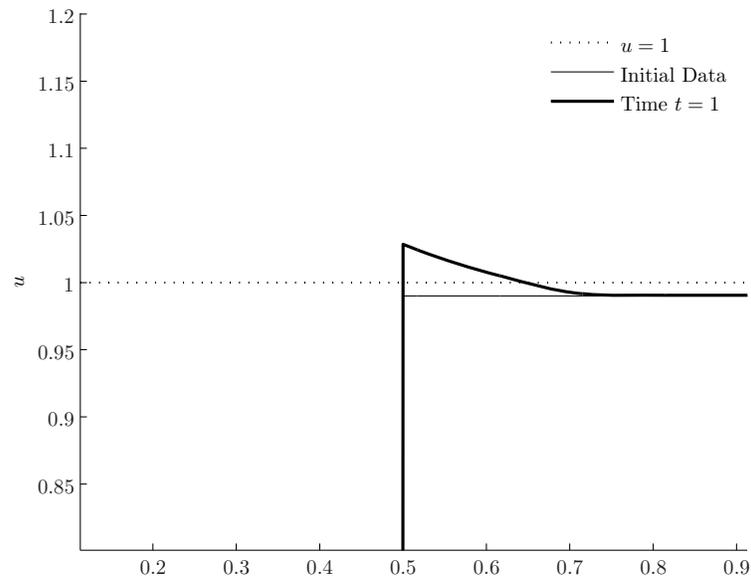}
    \caption{Appearance of disconnected water regions. Zoom.}
    \label{fig.mushy2.zoom}
  \end{figure}
\end{center}

\subsection{Behavior as $\varepsilon\to 0$}
In Figure~\ref{fig.eps.to.0} we illustrate the effect described in
Section~\ref{sect:limit.epsilon}. When $\varepsilon\to 0$, the
solution of the non-local problem converges to the solution of the
local Stefan problem with the same initial data. Moreover, the mushy
regions that were created because of the non-local effect disappear.
The initial datum is again the characteristic function
$f(x)=2\cdot\mathds{1}_{[-1,1]}$. We compute the solution for
$\eps=1$, $0.5$ and $0.2$.
\begin{center}
  \begin{figure}
    [ht]
    \includegraphics[width=10cm]{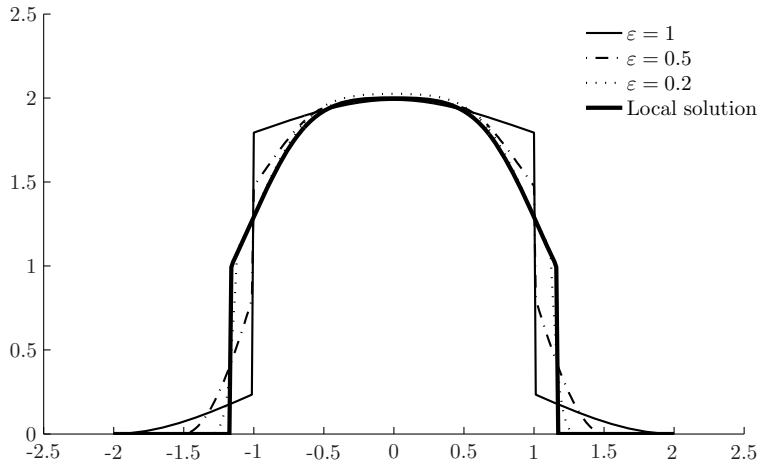}
    \caption{Convergence to the solution of the local problem as $\varepsilon\to 0$.}
    \label{fig.eps.to.0}
  \end{figure}
\end{center}

%[11:48:21] Manol\'{\i}n: esta hecho para T=1
%[11:48:40] Manol\'{\i}n: entonces, en el modelo local ya la frontera a avanzado
%[11:48:59] Manol\'{\i}n: mientras que para eps=1 y eps=0.5 no
%[11:49:17] Manol\'{\i}n: para eps=0.2, la frontera ya avanza
%[11:49:28] Manol\'{\i}n: porque se acerca del comportamiento del local
%[11:49:51] Manol\'{\i}n: de hecho, el tiempo de espera es como 1/  norma infinita de la J
%[11:50:08] Manol\'{\i}n: entonces, cuando eps se hace pequeno, el tiempo de espera tambien
%[11:50:20] Manol\'{\i}n: es un comentario que se puede hacer en la seccion de numerico

%
\subsection{Behavior as $t\to \infty$}
Finally we show an example of the asymptotic behavior of the
solutions as $t\to\infty$, which is described in
Section~\ref{sect:asymptotic}. We have taken here as initial datum
$$
  f(x)=\left\{
  \begin{array}
    {l@{\qquad}l}
    0,& x< -4,\\
    (\sin(5x))_+,&-4\leq x\leq -1.5,\\
    \sin(2x)+3,&|x|< 1.5,\\
    0, &1.5\leq x\leq6,\\
    0.3,&6<x<6.5,
  \end{array}
  \right.
$$
so that the convergence to a non-local mesa is more evident.

\begin{center}
  \begin{figure}
    [ht]
    \includegraphics[width=10cm]{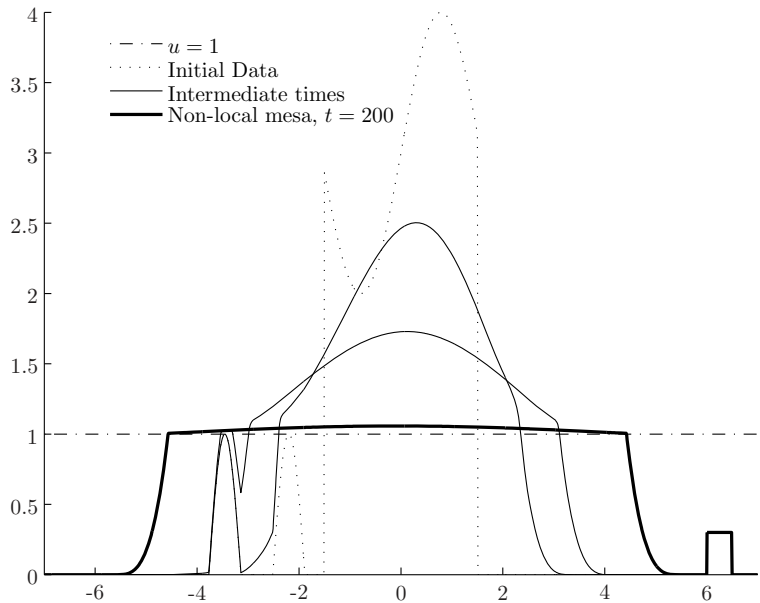}
    \caption{Convergence to the mesa as $t\to\infty$.}
    \label{fig.t.infty}
  \end{figure}
\end{center}

%%%%%%%%%%%%%%%%%%%%%%%%%%%%%%%%%%%%%%%%%%%%%%%%%%%%%%%%%%%%%%%%%%%%%%%%%%%%%%%%%
%%%%%%%%%%%%%%%%%%%%%%%%%%%%%%%%%%%%%%%%%%%%%%%%%%%%%%%%%%%%%%%%%%%%%%%%%%%%%%%%%
\section{Conclusion}

We have proposed a non-local model to describe the evolution of a
mixture of ice and water in an intermediate  mesoscopic scale, and
derived some properties for the solutions which are interesting from
the physical point of view, in particular the creation of mushy
regions. We have also proved that the local Stefan problem is
obtained in the macroscopic limit, and that mushy regions disappear
in this latter scale if they were not present initially.

Theorem \ref{thm:mushy.creation} does not only  assert that mushy
regions are indeed created. It has another interesting consequence
reflected in \eqref{eq:equality.supports.short.times}: there is a
waiting time $t_0=t_0(f)$ until which the support of $v=(u-1)_+$
remains identical to that of $(f-1)_+$. In other words, the support
of the water phase does not evolve until time $t_0$, though the
temperature in this phase is decreasing because energy is consumed
to break the ice. This waiting time can be interpreted as the time
needed to break the nearby ice phase in order to convert it into
water. We are facing  a typical ``latent heat" phenomenon which is
not included in the usual local model. In the case of
equation~\eqref{eq:stefan.local}, the water phase begins to move
instantaneously and ice is melted without any waiting time.

Summarizing, this nonlocal model seems meaningful and allows several
natural phenomena at an intermediate scale which are not present in
the local model.
%%%%%%%%%%%%%%%%%%%%%%%%%%%%%%%%%%%%%%%%%%%%%%%%%%%%%%%%%%%%%%%%%%%%%%%%%%%%%%%%%%%%%%
\section*{Appendix}
\renewcommand{\d}{\,\mathrm{d}}
\renewcommand{\theequation}{A.\arabic{equation}}
\renewcommand{\thesection}{A}
{\small
%\subsection{Asymptotic estimates}
In  the spirit of \cite{ChasseigneChavesRossi2006}, we prove asymptotic estimates concerning the decay of solutions of the nonlocal heat equation
\begin{equation}\label{app:eq:nonloc.heat}
 u_t=J\ast u-u,\qquad u(0)=f.
\end{equation}
We only assume here that the kernel $J$ is symmetric, nonnegative,
with unit mass and a finite second order momentum denoted by $m_2$.
We prove that the solution $u$ is asymptotically similar to the
solution of the local heat equation with the same initial data. The
gain with respect to  the paper~\cite{ChasseigneChavesRossi2006} is
that our results are valid for general  initial data $f\in\L^1(\mathbb{R}^N)$, without
assuming anything about their Fourier transform or their
$\L^\infty$-norm.

Notice that the solutions will not eventually enter in
the class of data considered in~\cite{ChasseigneChavesRossi2006}
(unless they were already there initially). Indeed, they can be
written as
\begin{equation}
\label{eq:representation.formula.nlhe}
 u(t)=\e^{-t}f+\big(\omega(t)\ast f\big)
\end{equation}
with $\omega(t)$ smooth, integrable and
bounded,~\cite{BrandleChasseigneFerreira2010}. Hence, $u$ is not
bounded if $f\notin\L^\infty(\mathbb{R}^N)$. However, by subtracting
$\e^{-t}f$ to $u(t)$ we ensure that $u(t)-\e^{-t}f$ is bounded,
since $\omega(t)$ is bounded and $f\in\L^1(\mathbb{R}^N)$. Moreover,
$$\|\hat u(t)-\e^{-t}\hat f\|_{\L^1(\mathbb{R}^N)}\leq\|\hat{\omega}(t)\|_{\L^1(\mathbb{R}^N)}\|\hat{f}\|_{\L^\infty(\mathbb{R}^N)}\leq
    \|\hat{\omega}(t)\|_{\L^1(\mathbb{R}^N)}\|f\|_{\L^1(\mathbb{R}^N)}<\infty,$$
so that the Fourier transform of the difference is in
$\L^1(\mathbb{R}^N)$. This is important to go back to the original
variables after making the computations in Fourier variables.

\begin{theorem}\label{app:thm:refined}
    Let $f\in\L^1(\mathbb{R}^N)$ and $u$ be the solution of \eqref{app:eq:nonloc.heat} with $u(0)=f$. Let  $h$
   be the solution of
    $$
   h_t=\displaystyle\frac{m_2}{2}\Delta h,\qquad h(0)=u(0)=f.
    $$
    Then, as $t\to\infty$, there exists a function $\eps(t)\to0$  (depending only on $J$
    and $N$) such that
    \begin{eqnarray}
    \label{eq:app.estimate.infty}
     t^{N/2}\max_{\R^N}\big|u(t)-\e^{-t}f-h(t)\big|\leq
     \|f\|_{\L^1(\mathbb{R}^N)}\eps(t).
    \end{eqnarray}
\end{theorem}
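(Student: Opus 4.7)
The plan is to exploit the representation formula $u(t) = \e^{-t}f + \omega(t)\ast f$ from \cite{BrandleChasseigneFerreira2010}, in which $\omega(t)$ is the regular part of the fundamental solution of \eqref{app:eq:nonloc.heat}. Writing $h(t) = G(t)\ast f$ with $G(t,\cdot)$ the Gaussian heat kernel associated with $(m_2/2)\Delta$, one has
\begin{equation*}
u(t) - \e^{-t}f - h(t) = \big(\omega(t) - G(t)\big)\ast f,
\end{equation*}
and Young's inequality reduces the task to proving
\begin{equation*}
\|\omega(t) - G(t)\|_{\L^\infty(\R^N)}\le t^{-N/2}\eps(t),\qquad \eps(t)\to 0.
\end{equation*}
The theorem then follows with constant $\|f\|_{\L^1(\R^N)}$. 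Compared with \cite{ChasseigneChavesRossi2006}, the improvement is precisely this reformulation: all the dependence on $f$ is absorbed into its $\L^1$-norm, and the statement becomes purely kernel-dependent.

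For the $\L^\infty$ bound I would work in Fourier variables, where the transforms are explicit,
\begin{equation*}
\hat\omega(t,\xi) = \e^{-t}\bigl(\e^{t\hat J(\xi)}-1\bigr),\qquad \hat G(t,\xi) = \e^{-m_2 t|\xi|^2/2}.
\end{equation*}
Once one checks that $\hat\omega(t) - \hat G(t)\in\L^1(\R^N)$ for $t$ large, Fourier inversion gives
\begin{equation*}
\|\omega(t) - G(t)\|_{\L^\infty(\R^N)} \le (2\pi)^{-N}\!\int_{\R^N}\bigl|\e^{t(\hat J(\xi)-1)} - \e^{-t} - \e^{-m_2 t|\xi|^2/2}\bigr|\,\mathrm{d}\xi.
\end{equation*}
I would split the integral at $|\xi|=\delta(t)$, with $\delta(t)\to 0$ and $\sqrt t\,\delta(t)\to\infty$ (typically $\delta(t) = t^{-\alpha}$ for some $\alpha\in(0,1/2)$). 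On the low-frequency ball $|\xi|\le\delta(t)$, the symmetry of $J$ and the finiteness of $m_2$ provide the Taylor expansion $\hat J(\xi) - 1 = -(m_2/2)|\xi|^2 + o(|\xi|^2)$ and the lower bound $1 - \hat J(\xi)\ge c_0|\xi|^2$ on a neighbourhood of the origin. Rescaling $\xi = y/\sqrt t$ brings out a prefactor $t^{-N/2}$ and produces an integrand that tends pointwise to zero and is dominated by an integrable Gaussian; dominated convergence then yields an $o(t^{-N/2})$ contribution.

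The main obstacle is the high-frequency piece $|\xi|>\delta(t)$. The Gaussian part $\int_{|\xi|>\delta(t)}\e^{-m_2 t|\xi|^2/2}\,\mathrm{d}\xi$ is $o(t^{-N/2})$ since $\sqrt t\,\delta(t)\to\infty$; the delicate quantity is $\e^{t(\hat J-1)} - \e^{-t} = \e^{-t}\bigl(\e^{t\hat J}-1\bigr)$. From $1-\hat J(\xi) = \int_{\R^N}(1-\cos(\xi\cdot y))J(y)\,\dy$ together with $\hat J\in\C_0(\R^N)$, one obtains the strict inequality $\hat J(\xi)<1$ for $\xi\ne 0$, and hence a uniform lower bound $1-\hat J(\xi)\ge \mu(\delta)>0$ on $\{|\xi|\ge\delta\}$, so that $|\e^{t(\hat J(\xi)-1)}|\le \e^{-\mu(\delta(t))t}$ pointwise. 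The hard step is to upgrade this pointwise exponential smallness to an $\L^1$-bound in $\xi$ over the unbounded region $|\xi|>\delta(t)$: one exploits the sharper inequality $|\e^{t\hat J(\xi)}-1|\le |t\hat J(\xi)|\,\e^{t|\hat J(\xi)|}$, balances it against $\hat G$, and tunes $\delta(t)$ so that the high-frequency contribution is also $o(t^{-N/2})$. Summing the two pieces delivers the required admissible function $\eps(t)\to 0$.
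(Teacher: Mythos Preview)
Your proposal is correct and follows essentially the same route as the paper: both reduce, via Fourier inversion, to estimating $\int_{\R^N}\bigl|\e^{t(\hat J-1)}-\e^{-t}-\e^{-c|\xi|^2t}\bigr|\,\mathrm{d}\xi$ by a low/high-frequency split, and both control the far field through the power-series inequality $|\e^{t\hat J}-1|\le t|\hat J|\,\e^{t|\hat J|}$. The only cosmetic differences are that you first factor out $f$ at the kernel level via Young's inequality (the paper instead carries $\hat f$ through the integral and bounds it by $\|\hat f\|_\infty\le\|f\|_{\L^1}$), and that the paper makes explicit a fixed intermediate radius $a$ separating the Taylor region from the region where $|\hat J|\le 1-\delta$ and $\hat J$ is integrable---which is precisely what your single time-dependent cutoff would have to reproduce once you ``tune $\delta(t)$''.
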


\begin{proof}
    Following \cite[Theorem~2.2]{ChasseigneChavesRossi2006}, we start from
    \begin{equation*}
    \hat{u} (\xi, t) = \e^{ (\hat{J} (\xi) -1)t} \hat{f}(\xi)\quad\text{and}\quad
    \hat{h} (\xi , t) = \e^{ - c |\xi|^2 \, t}\hat{f}(\xi),\ \mbox{ with }c=\frac{m_2}{2},
    \end{equation*}
    but we make a different estimate using $u(t)-\e^{-t}f$ instead
 of $u(t)$,
    \begin{eqnarray*}
    \displaystyle \int_{\R^N} |\hat{u}-\e^{-t}\hat{f}-\hat{f}| (\xi, t) \d\xi &=&
    \int_{\R^N} \left|\left( \e^{t (\hat{J} (\xi) -1)} - \e^{-t}
    -\e^{- c|\xi|^2 t}\right) \hat{f}(\xi) \right| \d\xi \\[10pt]
    &=& \displaystyle \int_{|\xi| \ge r(t)} \left|\left( \e^{t
    (\hat{J} (\xi) -1)} - \e^{-t}-\e^{- c |\xi|^2 t}\right)
    \hat{f}(\xi) \right| \d\xi \\[10pt]
    &&+\displaystyle \int_{|\xi | < r(t)} \left|\left( \e^{t
    (\hat{J} (\xi) -1)} -\e^{-t} - \e^{- c |\xi|^2 t}\right)
    \hat{f}(\xi) \right| \d\xi
    \\[10pt]&=& I + I\!I.
    \end{eqnarray*}
As in~\cite{ChasseigneChavesRossi2006}, we split $I$ into two parts
\begin{equation*}
    I \le  \int_{|\xi| \ge r(t)} \left| \e^{- c|\xi|^2t} \hat{u}_0(\xi)\right| \d\xi
    + \int_{|\xi| \ge r(t)} \left| \e^{t (\hat{J} (\xi) -1)}-\e^{-t}\right| | \hat{u}_0(\xi) | \d\xi
    = I_1 + I_2.
    \end{equation*}
     The only important modification with respect to the proof of~\cite[Theorem~2.2]{ChasseigneChavesRossi2006} concerns the term $I_2$.
     In~\cite{ChasseigneChavesRossi2006}
 this term is estimated by using the $\L^1$-norm of $\hat{f}$, which is something we do not want to do here.

 In order to estimate $I_2$ we use that that $\hat{J}$ verifies
    \begin{equation*}
    \hat{J}(\xi)\le 1-c|\xi|^2+|\xi|^2 h(\xi), \quad \text{with $h$ bounded,\quad $h(\xi)\to 0$ as $\xi\to 0$.}
    \end{equation*}
    Hence
    there exist $a,D,\delta>0$ such that
    \begin{equation*}
    \hat{J} (\xi) \le 1-D|\xi|^2, \mbox{ for } |\xi|\le a\quad \mbox{ and }\quad
    |\hat{J} (\xi)| \le 1 - \delta, \mbox{ for } |\xi|\ge a.
    \end{equation*}
    We decompose $I_2$ by considering separately the sets $\{r(t)\leq|\xi|\le a\}$ and
    $\{|\xi|\ge a\}$.
   The integration over $\{|\xi|\geq a\}$ is estimated here taking into account the term $\e^{-t}f$,
   $$\int_{|\xi| \ge a} \left| \e^{t (\hat{J} (\xi) -1)}
    -\e^{-t}\right| |\hat{f}(\xi) | \d\xi =\e^{-t}\int_{|\xi| \ge a} \left| \e^{t \hat{J} (\xi)}
    -1\right| |\hat{f}(\xi) | \d\xi.
    $$
    Using that in this set $|\hat{J}|\leq 1-\delta$, we get that
  \begin{eqnarray*}
    \left| \e^{t \hat{J} (\xi)}
    -1\right| |\hat{f}(\xi) |&\leq& |\hat{f}(\xi)|\sum_{n=1}^\infty\frac{t^n|\hat{J}(\xi)|^n}{n!}\\
    &\leq& t|\hat{J}(\xi)||\hat{f}(\xi)|\sum_{n=0}^\infty\frac{t^n(1-\delta)^n}{(n+1)!}\\
    &\leq& t\e^{(1-\delta) t}|\hat{J}(\xi)\vvvert\hat{f}\|_\infty.
\end{eqnarray*}
    Hence
    $$t^{N/2}\int_{|\xi| \ge a} \left| \e^{t (\hat{J} (\xi) -1)}
    -\e^{-t}\right| |\hat{f}(\xi) | \d\xi \leq t^{N/2+1}\e^{-\delta t}\|\hat{f}\|_\infty
    \int_{|\xi|\geq a}|\hat{J}(\xi)|\d\xi\leq \|f\|_1\eps_2(t),
    $$
    where $\eps_2(t)\to0$ exponentially fast, and independently of the initial data.

Summing up (see~\cite[Theorem~2.2]{ChasseigneChavesRossi2006} for the estimates of the other terms of $I + I\!I$)
 $$t^{N/2}\|\hat{u}(t)-\e^{-t}\hat{f}-\hat{h}(t)\|_{\L^1(\mathbb{R}^N)}\leq\|f\|_{\L^1(\mathbb{R}^N)}\eps(t)\to0$$
 as $t\to\infty$ for some function $\eps$ which only depends on $J$ and $N$. This implies~\eqref{eq:app.estimate.infty} by going back to the original variables.
\end{proof}

%%%%%%%%%%%%%%%%%%%%%%%%%%%%%%%%%%%%%%%%%%%%%%%%%%%%%%%%%%%%%%%%%%%%%%%%%%%%%%%%

\end{document}